\documentclass{amsart}

\usepackage{mdframed} 
\usepackage[margin=1.5in]{geometry}
\usepackage{latexsym,amsxtra,amscd,ifthen,amsmath,color, amsthm}
\usepackage{amsfonts}

\makeatletter
\@namedef{subjclassname@2020}{%
  \textup{2020} Mathematics Subject Classification}
\makeatother

\usepackage{verbatim}
\usepackage{hyperref}
\usepackage{cancel}
\usepackage{amsthm, tikz, tikz-cd, bbm} 
\usetikzlibrary{arrows}
\usepackage[capitalise]{cleveref}
\usepackage{amssymb}
\usepackage{amsmath}
\usepackage[notcite,notref, final]{showkeys}
\usepackage[all,cmtip]{xy}
\usepackage[normalem]{ulem} 
\usepackage[shortlabels]{enumitem}
\usepackage{xcolor}
\definecolor{darkgreen}{RGB}{55,138,0}
\definecolor{burntorange}{RGB}{180,85,0}
\definecolor{navyblue}{RGB}{18,40,180}
\definecolor{cyan(process)}{rgb}{0.0, 0.6, 1.0}

\hypersetup{
 colorlinks  = true,  
 urlcolor   = blue,  
 linkcolor  = navyblue,  
 citecolor  = black   
}
\allowdisplaybreaks

\usepackage{dcolumn}
\newcolumntype{2}{D{.}{}{5.0}}
\numberwithin{equation}{section}

\theoremstyle{plain}
\newtheorem{theorem}{Theorem}[section]
\newtheorem*{theorem*}{Theorem}
\newtheorem*{definition*}{Definition}
\newtheorem{lemma}[theorem]{Lemma}

\newtheorem{proposition}[theorem]{Proposition}
\newtheorem{corollary}[theorem]{Corollary}
\newtheorem{conjecture}[theorem]{Conjecture}

\theoremstyle{definition}
\newtheorem{definition}[theorem]{Definition}
\newtheorem{notation}[theorem]{Notation}

\newtheorem{example}[theorem]{Example}
\newtheorem{hypothesis}[theorem]{Hypothesis}

\newtheorem{definitiontheorem}[theorem]{Definition-Theorem}
\newtheorem{definitionproposition}[theorem]{Definition-Proposition}

\newtheorem{remark}[theorem]{Remark}
\newtheorem{question}[theorem]{Question}

\newcommand{\stkout}[1]{\ifmmode\text{\sout{\ensuremath{#1}}}\else\sout{#1}\fi}

\newcommand{\kk}{\Bbbk}

\newcommand{\ep}{\varepsilon}

\newcommand{\id}{\textnormal{id}}
\newcommand{\End}{\textnormal{End}}
\newcommand{\RN}[1]{%
  \textup{\uppercase\expandafter{\romannumeral#1}}%
}

\makeatletter              % This sequence of commands will
\let\c@equation\c@theorem  % incorporate equation numbering
                            % into theorem numbering scheme
\makeatother

%%%%%%%%%%%%%%%%%%%%%%%%%%%%%%
%%%%%%%%%%%%%%%%%%%%%%%%%%%%%%
%%%%%%%%%%%%%%%%%%%%%%%%%%%%%%

\begin{document}

\title[On non-counital Frobenius algebras]
{On non-counital Frobenius algebras}

%\thanks{The second author  was partially supported by the US National Science Foundation grant DMS-2100756.}

\author{Amanda Hernandez, Chelsea Walton, and Harshit Yadav}

\address{Hernandez: Department of Mathematics, Rice University, Houston, TX 77005, USA}
\email{anh9@rice.edu}

\address{Walton: Department of Mathematics, Rice University, Houston, TX 77005, USA}
\email{notlaw@rice.edu}

\address{Yadav: Department of Mathematics, Rice University, Houston, TX 77005, USA}
\email{hy39@rice.edu}

%%%%%%%%%%%%%%%%%%%%%%%%%%%%%%
%%%%%%%%%%%%%%%%%%%%%%%%%%%%%%
%%%%%%%%%%%%%%%%%%%%%%%%%%%%%%

\begin{abstract} 
A Frobenius algebra is a finite-dimensional algebra $A$ which comes equipped with a coassociative, counital comultiplication map $\Delta$ that is an $A$-bimodule map. Here, we examine comultiplication maps for generalizations of Frobenius algebras: finite-dimensional self-injective (quasi-Frobenius) algebras. We show that large classes  of such algebras, including finite-dimensional weak Hopf algebras, come equipped with a nonzero map $\Delta$ as above  that is not necessarily counital. We also conjecture that this comultiplicative structure holds for self-injective algebras in general.
\end{abstract}

\subjclass[2020]{16D50, 16T15, 16T05}
%Found here:  https://zbmath.org/static/msc2020.pdf
\keywords{Frobenius, non-counital, quasi-Frobenius, self-injective, weak Hopf algebra}

\maketitle

%\setcounter{tocdepth}{2}
%\tableofcontents
%\setcounter{section}{-1}

%%%%%%%%%%%%%%%%%%%%%%%%%%%%%%%%%%%%%%%
%%%%%%%%%%%%%%%%%%%%%%%%%%%%%%%%%%%%%%%
%%%%%%%%%%%%%%%%%%%%%%%%%%%%%%%%%%%%%%%

%\noindent \cw{[Sample CW comment (in brackets)], Sample CW edit}, \hy{[HY editing color]}, \ah{[AH editing color]}

\section{Introduction} \label{sec:intro}

 All algebraic structures in this work are over a field $\kk$ of characteristic 0. Moreover, all algebras are finite-dimensional as $\kk$-vector spaces, and are unital and associative.
This work is motivated by the various characterizations of Frobenius algebras in the literature. These structures were introduced by Frobenius in 1903 in terms of {\it paratrophic matrices} \cite{Frobenius}, and revamped in the 1930s by Brauer-Nesbitt and Nakayama in terms of their representation theory and certain forms that they admit \cite{BN, Nak}; see also \cite[Chapter~6]{Lam}. Starting in the 1990s, the following characterization of a Frobenius algebra became prevalent due to its connection to 2-dimensional Topological Quantum Field Theories (TQFTs). 

\begin{definitiontheorem} \label{defthm:Frob} \cite{Quinn, Abrams}
Let $A$ be an algebra  with multiplication map, $m: A\otimes A\to A$. Then $A$ is {\it Frobenius} if and only if it comes equipped with a $\kk$-linear, coassociative, counital comultiplication map $\Delta: A \to A \otimes A$ that is an $A$-bimodule map,~i.e.,  
\begin{itemize}
    \item   $(\Delta \otimes \id) \Delta = (\id \otimes \Delta) \Delta$ \quad (for coassociativity);\smallskip
    \item $\exists$ $\kk$-linear map $\varepsilon: A \to \kk$ so that 
$(\varepsilon \otimes \id) \Delta = \id = (\id \otimes \varepsilon) \Delta$ \quad (for counitality).
\end{itemize}
Moreover, the left and right $A$-action on $A$ is given by multiplication, and the left (resp., right) $A$-action on $A \otimes A$ is given by left (resp., right) multiplication in the first (resp., second) factor; so, the $A$-bimodule map condition is equivalent to 
\begin{equation} \label{eq:Delta-m}
 (\id \otimes m)(\Delta \otimes \id) = \Delta m =  (m \otimes \id)(\id \otimes \Delta).
\end{equation} 
\end{definitiontheorem}

We examine a similar comultiplicative structure for generalizations of Frobenius algebras: self-injective ({\it quasi-Frobenius}) algebras. We show that large subclasses of self-injective algebras $A$ come equipped with a nonzero comultiplication map $\Delta$ that make $A$ {\it non-counital Frobenius}; that is, $\Delta$ is coassociative, satisfies \eqref{eq:Delta-m}, but is not necessarily counital.

\subsection{Self-injective endomorphism algebras} To proceed, recall an algebra $A$ is called {\it self-injective} if the $A$-modules ${}_A A$ and $A_A$ are both injective. Moreover, an algebra $B$ is {\it basic} if $B_B$ is a direct sum of pairwise non-isomorphic indecomposable (projective) modules.
 In 2006, Skowroński and Yamagata obtained a characterization of self-injective algebras in terms of endomorphism algebras over basic algebras \cite{SYpaper}, given as follows.
\smallskip

Take $B$ a basic  algebra, and let 
$B = P_0 \oplus \cdots \oplus P_{n-1}$ be a decomposition of $B$ into a direct sum of indecomposable right $B$-modules. Here, $n$ is the number of primitive orthogonal idempotents $e_i$ of $B$ for which $1_B\hspace{-.01in} = \hspace{-.01in}\sum_{i=0}^{n-1} e_i$. For  $m_0, \dots, m_{n-1} \hspace{-.015in} \in \hspace{-.02in} \mathbb{Z}_{>0}$, consider the notation:
\begin{equation} \label{eq:B(m_i)}
B(m_0, \dots, m_{n-1}) := \End_B(M_0 \oplus \cdots \oplus M_{n-1}), \quad \text{ for } M_i: = P_i^{\oplus m_i}.
\end{equation}

\begin{theorem} \cite[Theorems~1.3 and 2.1]{SYpaper} \cite[Theorems~IV.6.1,~IV.6.2]{SYbook}
\label{thm:SY}
An algebra $A$ is self-injective if and only if  $A \cong B(m_0, \dots, m_{n-1})$ for $B$ a basic, self-injective algebra. Further, $A$ is Frobenius if and only if $m_i = m_{\nu(i)}$ for all $i$, where $\nu$ is the Nakayama  permutation of $B$. \qed
\end{theorem}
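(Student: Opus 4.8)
The plan is to derive both assertions from Morita theory together with the structure theory of self-injective algebras, tracking multiplicities by hand in the part where Morita invariance fails. For the first equivalence I would use two standard facts: self-injectivity is a Morita-invariant property, and every finite-dimensional algebra is Morita equivalent to a basic algebra that is unique up to isomorphism. Given $A$ self-injective, let $B$ be its basic algebra and fix the decomposition $B = P_0 \oplus \cdots \oplus P_{n-1}$ into indecomposable projective right $B$-modules. Letting $m_i$ denote the multiplicity of the isomorphism class of $P_i$ as a summand of $A_A$, the module $M := \bigoplus_i P_i^{\oplus m_i}$ is projective and contains every indecomposable projective, hence is a progenerator, and the equivalence realizing $A$ yields $A \cong \End_B(M) = B(m_0, \dots, m_{n-1})$; here $B$ is self-injective because self-injectivity transfers along the equivalence. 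Conversely, for any basic self-injective $B$ and positive integers $m_i$, the same $M$ is a progenerator, so $\End_B(M)$ is Morita equivalent to $B$ and therefore self-injective. This gives the first statement.

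For the Frobenius statement, the essential subtlety is that being Frobenius is \emph{not} a Morita invariant, so I would compute multiplicities explicitly. Write $A^\ast := \mathrm{Hom}_\kk(A, \kk)$ for the $\kk$-linear dual, and recall that $A$ is Frobenius if and only if $A_A \cong A^\ast$ as right $A$-modules. Under the Morita equivalence $\mathrm{Hom}_B(M, -) \colon \mathrm{mod}\text{-}B \to \mathrm{mod}\text{-}A$, the indecomposable projective $B$-modules $P_i$ are carried to the distinct indecomposable projective right $A$-modules $\widetilde{P}_i := \mathrm{Hom}_B(M, P_i)$, and decomposing $A = \mathrm{Hom}_B(M, M)$ shows that $\widetilde{P}_i$ appears in $A_A$ with multiplicity exactly $m_i$; that is, $A_A \cong \bigoplus_i \widetilde{P}_i^{\,\oplus m_i}$. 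The same idempotent count on ${}_A A$ gives the analogous decomposition of the left regular module with the same multiplicities $m_i$.

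Since $A$ is self-injective, every indecomposable projective is injective. Let $S_j := \mathrm{top}(\widetilde{P}_j)$ be the simple right $A$-modules and $I_j := I(S_j)$ their injective envelopes; the Nakayama permutation $\nu$ is defined by $\mathrm{soc}(\widetilde{P}_i) \cong S_{\nu(i)}$, whence $\widetilde{P}_i \cong I_{\nu(i)}$ by injectivity. Thus $A_A \cong \bigoplus_i I_{\nu(i)}^{\,\oplus m_i} = \bigoplus_j I_j^{\,\oplus m_{\nu^{-1}(j)}}$. On the other hand, dualizing the decomposition of ${}_A A$ and using that the $\kk$-dual of an indecomposable projective left module is the injective envelope of the corresponding simple right module yields $A^\ast \cong \bigoplus_j I_j^{\,\oplus m_j}$. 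By Krull--Schmidt, the isomorphism $A_A \cong A^\ast$ holds if and only if $m_{\nu^{-1}(j)} = m_j$ for all $j$, equivalently $m_i = m_{\nu(i)}$ for all $i$. Finally, one checks that $\nu$ may be computed on $B$: the equivalence matches the simple, projective, and injective modules of $A$ and $B$ compatibly, so their Nakayama permutations agree under this identification.

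The inputs to the first part—Morita invariance of self-injectivity and reduction to the basic algebra—are standard, so I expect the main obstacle to lie in the bookkeeping of the Frobenius part: matching the multiplicities of the indecomposable projectives in $A_A$ against those of the injectives in $A^\ast$ through $\nu$, and confirming that $\nu$ is genuinely a Morita invariant so that it is legitimate to read it off from $B$. The socle-and-top computations for the duals $\mathrm{Hom}_\kk(Ae, \kk)$ are where I would be most careful to keep the left- and right-module conventions straight.
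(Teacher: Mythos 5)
Your proposal is correct, but there is nothing in the paper to compare it against: Theorem~\ref{thm:SY} is imported verbatim from Skowro\'{n}ski--Yamagata (\cite{SYpaper}, \cite{SYbook}) and stated with a \qedsymbol\ and no argument, so the paper's ``proof'' is a citation. What you have written is essentially a reconstruction of the standard Morita-theoretic argument that lives in those references. Your first part is the routine transfer: self-injectivity is Morita invariant, every finite-dimensional algebra is Morita equivalent to its basic algebra, and a progenerator over $B$ is exactly a module $\bigoplus_i P_i^{\oplus m_i}$ with all $m_i \geq 1$ (you correctly need, and implicitly use, that $A_A$ contains every indecomposable projective as a summand to guarantee $m_i > 0$). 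Your second part is also sound, and you correctly identify the one place where Morita invariance cannot be invoked blindly: Frobeniusness. The chain $A_A \cong \bigoplus_i \widetilde{P}_i^{\oplus m_i}$, $\widetilde{P}_i \cong I_{\nu(i)}$ (which uses that indecomposable projectives over a self-injective algebra are injective with simple socle), $A^{\ast} \cong \bigoplus_j I_j^{\oplus m_j}$ via the duality between projective left modules and injective right modules, and Krull--Schmidt gives exactly the criterion $m_i = m_{\nu(i)}$, matching the convention $\mathrm{soc}(P_i) \cong \mathrm{top}(P_{\nu(i)})$ used in the paper's Proposition~\ref{prop:Nak-perm}. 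Your closing remark that $\nu$ can be read off from $B$ because Morita equivalences commute with $\mathrm{soc}$ and $\mathrm{top}$ is the right justification and is worth keeping explicit, since the theorem states the condition in terms of the Nakayama permutation of $B$ while the multiplicities live in $A$. In short: a correct, self-contained proof of a result the paper only cites.
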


Towards our goal, let us consider an important class of basic algebras $B$ depending on a certain directed graph, or a {\it quiver}, $Q$. Here, we read paths of $Q$ from left-to-right.

\begin{notation} \label{not:Bnl} Take $n \in \mathbb{Z}_{>0}$ and $1 \leq \ell \leq n-1$. Let $Q_{(n)}$ be an $n$-cycle quiver with vertex set $Q_0 = \{0, 1, \dots, n-1\}$ and arrow set $Q_1=\{\alpha_i: i \to i+1\}_{i=0, \dots, n-1}$.  Take $R$ to be the arrow ideal of the path algebra $\kk Q$, and let $\mathcal{I}_\ell$ be the admissible ideal $R^\ell$ of $\kk Q$. Form a  {\it bound quiver algebra} corresponding to the data:
$$B_{n,\ell}:=\kk Q_{(n)} / \mathcal{I}_\ell.$$
\end{notation}

In fact, all  self-injective, {\it connected},  {\it Nakayama} bound quiver  algebras are  of the form $B_{n,\ell}$ \cite[Theorem~IV.6.15]{SYbook}. Therefore, we set the following terminology.

\begin{definition} \label{def:NSY}
Take $A:=B(m_0, \dots, m_{n-1})$, a self-injective algebra from Theorem~\ref{thm:SY}. We refer to $A$ as a {\it Nakayama-Skowro\'{n}ski-Yamagata (NSY) algebra} if $B = B_{n,\ell}$.
\end{definition}

This brings us to our first main result. 

\begin{theorem}[Theorem~\ref{thm:main}] \label{thm:main-intro} The NSY algebras $A :=B(m_0, \dots, m_{n-1})$ are non-counital Frobenius via an explicitly defined nonzero comultiplication map $\Delta: A \to A\otimes A$. In particular, $\Delta$ is counital precisely when $A$ is Frobenius (e.g., when $m_i = m_{\nu(i)}$ for all $i$, as in Theorem~\ref{thm:SY}).
\end{theorem}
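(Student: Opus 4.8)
The plan is to realize $A$ as an idempotent truncation of a matrix algebra over $B$ and to obtain $\Delta$ by compressing the Frobenius comultiplication of that matrix algebra; the only real work is then the counit analysis. First I would record the Frobenius structure of the base. By \cref{thm:SY} with all multiplicities equal to $1$, the basic self-injective algebra $B=B_{n,\ell}$ is itself Frobenius; concretely its Nakayama permutation is $\nu(i)\equiv i+\ell-1\ (\mathrm{mod}\ n)$, and a Frobenius form $\lambda\colon B\to\kk$ is the functional supported on the socle, sending each maximal path (length $\ell-1$) to $1$ and all shorter paths to $0$. Writing $\{b_\mu\}$, $\{b^\mu\}$ for $\lambda$-dual bases of $B$, this gives the copairing $R=\sum_\mu b_\mu\otimes b^\mu\in B\otimes B$ and the comultiplication $\Delta_B(b)=(b\otimes 1)R=R(1\otimes b)$. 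Since $1\le\ell\le n-1$, each $\operatorname{Hom}_B(P_i,P_j)\cong e_jBe_i$ is at most one-dimensional, so all of these data are explicit in terms of paths.

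Next I would set up the truncation. Put $N=\sum_i m_i$ and let $\Lambda:=M_N(B)=M_N(\kk)\otimes B$, which is Frobenius (form $\mathrm{tr}\otimes\lambda$, copairing built from that of $M_N(\kk)$ and $R$) because $B$ is. Indexing the $N$ summands of $M=\bigoplus_iP_i^{\oplus m_i}$ by $p$ with underlying vertex $i(p)$, the diagonal idempotent $\pi=\mathrm{diag}(e_{i(1)},\dots,e_{i(N)})\in\Lambda$ satisfies $\pi(B^N)=M$, whence $A=\End_B(M)\cong\pi\Lambda\pi$. I would then \emph{define} $\Delta$ on $A=\pi\Lambda\pi$ by compression,
\[
\Delta(a):=(\pi\otimes\pi)\,\Delta_\Lambda(a)\,(\pi\otimes\pi),
\]
which lands in $\pi\Lambda\pi\otimes\pi\Lambda\pi=A\otimes A$. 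Coassociativity and the bimodule identity \eqref{eq:Delta-m} then follow formally from those of $\Delta_\Lambda$: using $a=\pi a\pi$ together with $\Delta_\Lambda(xy)=(x\otimes1)\Delta_\Lambda(y)=\Delta_\Lambda(x)(1\otimes y)$, one checks that inserting $\pi$ in every tensor slot commutes with both sides of the coassociativity and bimodule equations, so applying $(\pi(-)\pi)^{\otimes 2}$ and $(\pi(-)\pi)^{\otimes 3}$ transports the identities from $\Lambda$ to $A$. This already shows $A$ is non-counital Frobenius, with $\Delta$ given by the path-level formula for $R$ compressed into the blocks $e_{i(p)}Be_{i(q)}$.

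The counitality statement is the crux. By \cref{defthm:Frob}, $\Delta$ is counital if and only if $(A,\Delta,\varepsilon)$ is Frobenius, so $\Delta$ counital forces $A$ Frobenius and hence $m_i=m_{\nu(i)}$ by \cref{thm:SY}. For the converse I would show that counitality of the compressed $\Delta$ is equivalent to nondegeneracy of its copairing $\Delta(1_A)\in A\otimes A$, and then compute the rank of this copairing block by block. The compression identifies the relevant pairing with the socle pairing of $\Lambda$ restricted to $A$: a matrix unit $E_{pq}^{x}$ (with $x$ a path $i(p)\to i(q)$) pairs nontrivially only with units whose value lies in $\mathrm{soc}(B)$, i.e. with those landing in the block $e_{i(p)}Be_{\nu(i(p))}$. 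Thus the pairing couples the $m_{i(p)}$ copies of type $i(p)$ with the $m_{\nu(i(p))}$ copies of type $\nu(i(p))$, and this is a perfect pairing exactly when $m_i=m_{\nu(i)}$ for all $i$ — equivalently $M\cong\nu_B M$ for the Nakayama functor $\nu_B$ (since $\nu_B P_i=P_{\nu(i)}$), which is the module-theoretic form of the Frobenius criterion. Hence $\Delta$ is counital precisely when $m_i=m_{\nu(i)}$, i.e. precisely when $A$ is Frobenius.

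The main obstacle I anticipate is this last step: making the rank count rigorous and, in particular, verifying that the explicit compressed copairing $\Delta(1_A)$ is nondegenerate (not merely nonzero) exactly under balance, which demands careful bookkeeping of the Nakayama twist $\nu$ against the multiplicity labels $a=1,\dots,m_i$. Establishing that the compressed $\Delta$ really admits a counit in the balanced case — so that counitality holds for this specific $\Delta$ rather than for some other comultiplication — is the delicate point, and I expect it to occupy the bulk of the argument.
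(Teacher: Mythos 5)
There is a genuine gap, and it is fatal to the construction rather than to the bookkeeping you flagged: with the Frobenius structure you chose on $\Lambda=M_N(B)$, the compressed comultiplication is \emph{identically zero} whenever $\ell\geq 2$, i.e.\ for every non-semisimple NSY algebra. Concretely, take $f=\mathrm{tr}\otimes\lambda$ with $\lambda$ supported on the maximal paths. The $\lambda$-dual of a path $\alpha_{a,k}\colon a\to a+k$ is the path $\alpha_{a+k,\ell-1-k}\colon a+k\to a+\ell-1$, so the Casimir of $\Lambda$ is $\Delta_\Lambda(1_\Lambda)=\sum_{p,q,\mu}(E_{pq}\otimes b_\mu)\otimes(E_{qp}\otimes b^\mu)$, where the matrix units come in \emph{transposed} pairs but the path factors come in \emph{Nakayama-shifted} pairs. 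A term survives the compression $(\pi\otimes\pi)(-)(\pi\otimes\pi)$ only if $b_\mu\in e_{i(p)}Be_{i(q)}$ \emph{and} $b^\mu\in e_{i(q)}Be_{i(p)}$; but $b^\mu$ ends at vertex $i(p)+\ell-1\not\equiv i(p)\pmod n$ (recall $1\leq\ell-1\leq n-2$), so every term dies. Since $\Delta_\Lambda(a)=(a\otimes 1)\Delta_\Lambda(1_\Lambda)$, your $\Delta$ vanishes identically. Equivalently, and more simply: for $a,b\in A=\pi\Lambda\pi$ each diagonal entry $(ab)_{pp}$ lies in $e_{i(p)}Be_{i(p)}=\kk\, e_{i(p)}$ (a nontrivial cycle of $Q_{(n)}$ has length at least $n>\ell-1$), and $\lambda(e_{i(p)})=0$ for $\ell\geq 2$, so the form of $\Lambda$ restricts to the zero form on $A$. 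The zero map does satisfy coassociativity and \eqref{eq:Delta-m} vacuously, but it admits no counit even when $A$ is Frobenius --- already for $A=B_{2,2}(1,1)$, which the paper shows is counital Frobenius with a nonzero $\Delta$ --- so the second half of the theorem fails for your $\Delta$. In particular the block-by-block rank count in your third paragraph (``the pairing couples the $m_{i(p)}$ copies of type $i(p)$ with the $m_{\nu(i(p))}$ copies of type $\nu(i(p))$'') is a miscomputation: the restricted pairing couples nothing with anything.

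The obstruction is exactly the Nakayama twist you were trying to exploit, and it cannot be absorbed by a smarter idempotent compression of the \emph{standard} structure on $M_N(B)$: a working copairing must place its second tensor leg in $\mathrm{Hom}$-spaces out of $P_{\nu(i)}$-type summands, i.e.\ its matrix-unit parts must be of the twisted form ``$E_{pq}\otimes E_{qr}$ with $i(r)=i(p)-\ell+1$,'' never of the transposed form $E_{pq}\otimes E_{qp}$ that any $(\pi\otimes\pi)$-compression of $\Delta_\Lambda$ produces. One can try to repair this by twisting $\Delta_\Lambda$ by a unit of $\Lambda$ implementing $\nu$, but such a unit amounts to a bijection between the copies of $P_i$ and the copies of $P_{\nu(i)}$, which exists only when $m_i=m_{\nu(i)}$ --- precisely the Frobenius case --- so this route cannot produce the comultiplication in the non-Frobenius case that the theorem is really about. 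This is why the paper abandons compression altogether: it writes down an explicit Casimir element of $A$ itself, whose second index is shifted by $\ell-1$ and whose coefficient $1-\delta_{m_{i+k},m_{i+k-\ell+1}}(1-\delta_{t_{i+k},t_{i+k-\ell+1}})$ sums over \emph{all} pairs of copies when the multiplicities disagree, verifies the Casimir identity \eqref{eq:Delta1} by direct computation, and then invokes Lemma~\ref{lem:coassoc}. Your outer frame --- Casimir element $\Rightarrow$ non-counital Frobenius, and counitality $\Leftrightarrow$ nondegeneracy of $\Delta(1_A)$ --- is sound and matches the paper's Lemma~\ref{lem:coassoc} and Theorem~\ref{thm:main}(b); the specific copairing you feed into it is zero.
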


We establish Theorem~\ref{thm:main-intro} in Section~\ref{sec:mainresult}. We then provide several examples of comultiplicative structures of NSY algebras in Section~\ref{sec:examples}, such as for the Frobenius algebras $B_{n,\ell}(1,\dots, 1)$, and for Nakayama's 9-dimensional non-Frobenius self-injective algebra $B_{2,2}(2,1)$ \cite[page~624]{Nak}; we also compare the 28-dimensional Frobenius algebra $B_{4,3}(1,2,1,2)$ to the 27-dimensional  algebra $B_{4,3}(1,1,2,2)$ that is not counital.

\subsection{Finite-dimensional weak Hopf algebras} Next, we turn our attention to another important class of self-injective algebras: finite-dimensional weak Hopf algebras. A {\it weak Hopf algebra} is an algebra $H$ that is equipped with a $\kk$-linear coassociative comultiplication map $\Delta_{\textnormal{wk}}$, which is counital and satisfies certain compatibility axioms different than \eqref{eq:Delta-m}; see Definition~\ref{def:weak}. Such algebras are generalizations of Hopf algebras, which have gained prominence due to extensive work by Böhm-Nill-Szlanchányi in the late 1990s (see, e.g., \cite{BNS}). One of their main results is the following.

\begin{theorem} \label{thm:BNS-intro} \cite[Theorems~3.11 and~3.16]{BNS}
Finite-dimensional weak Hopf algebras $H$ are self-injective. Further, $H$ is Frobenius if and only if $H$ has a non-degenerate left integral (see Definition~\ref{def:integral}). \qed
\end{theorem}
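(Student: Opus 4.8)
This statement is quoted from \cite{BNS}; here is the approach one would take. Both parts hinge on the integral theory of $H$ together with the counital subalgebras $H_t := \varepsilon_t(H)$ and $H_s := \varepsilon_s(H)$ determined by the weak Hopf structure of Definition~\ref{def:weak}, and on the bijectivity of the antipode $S$. The organizing principle is the standard module-theoretic dictionary: a finite-dimensional algebra $A$ is right self-injective precisely when $A_A$ is an injective module, whereas $A$ is Frobenius precisely when $A_A \cong (A^*)_A$ as right $A$-modules, where $A^* := \mathrm{Hom}_{\kk}(A,\kk)$. The gap between these two conditions is exactly the Nakayama-multiplicity discrepancy that reappears in Theorem~\ref{thm:SY}, and it is this gap that separates ``self-injective'' from ``Frobenius'' in the present theorem.

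For self-injectivity, the plan is to realize the inclusion $H_t \hookrightarrow H$ as a Frobenius extension with semisimple base. First I would recall that $H_t$ is separable, hence semisimple in characteristic $0$, and that $H$ is finitely generated projective as an $H_t$-module. Next, starting from a nonzero left integral $\ell \in H$ (one satisfying $h\ell = \varepsilon_t(h)\,\ell$ for all $h$, as in Definition~\ref{def:integral}), I would build a Frobenius system for $H / H_t$: an $H_t$-bimodule ``conditional expectation'' $E : H \to H_t$ together with dual bases extracted from $\Delta(\ell)$ and paired through $E$, with the bimodule compatibilities verified via the antipode axioms. A Frobenius extension over a self-injective base is self-injective, since the coinduction functor $\mathrm{Hom}_{H_t}(H,-)$ is right adjoint to the exact restriction functor and hence preserves injectives; applying it to the injective $H_t$-module $H_t$ yields an injective right $H$-module isomorphic to $H_H$ by the Frobenius-extension property. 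This gives right self-injectivity, and the symmetric argument through $S$ and $H_s$ gives the left version.

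For the second part, I would translate ``Frobenius'' into an isomorphism $H \cong H^*$ of right $H$-modules and match it with non-degeneracy of an integral. For the direction ($\Leftarrow$), given a non-degenerate left integral $\ell$, the Fourier-type map $H^* \to H$, $\phi \mapsto \phi \rightharpoonup \ell = \ell_{(1)}\,\phi(\ell_{(2)})$, is bijective by the definition of non-degeneracy (Definition~\ref{def:integral}); the integral identity $h\ell = \varepsilon_t(h)\,\ell$ together with multiplicativity of $\Delta$ makes this map $H$-linear for the appropriate side, so it furnishes the required module isomorphism and $H$ is Frobenius. For the direction ($\Rightarrow$), given a Frobenius functional $\lambda \in H^*$ for which $(a,b) \mapsto \lambda(ab)$ is non-degenerate, I would take $\ell \in H$ to be the element representing the inverse isomorphism and verify, using associativity of the Frobenius form and the antipode axioms, that $\ell$ is a left integral; its non-degeneracy is then immediate from invertibility of the isomorphism.

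The main obstacle, and the point where the weak Hopf axioms genuinely depart from the classical Hopf case, is that $\Delta(\one) \neq \one \otimes \one$ and $\varepsilon$ is not an algebra map, so the integral conditions and all relevant module structures are governed by $\varepsilon_t,\varepsilon_s$ and the subalgebras $H_t, H_s$ rather than by $\varepsilon$ alone. Verifying the $H_t$-bimodule compatibility of the Frobenius system in the first part, and the genuine $H$-linearity (not merely $\kk$-linearity) of the Fourier map in the second, demands careful bookkeeping with these counital maps; establishing bijectivity of $S$ and the nonvanishing of the integral space at the outset are the prerequisites that make this bookkeeping tractable.
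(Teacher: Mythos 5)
The paper does not actually prove this statement: it is imported verbatim from \cite{BNS} (note the \qed attached to the statement, and its restatement as Theorem~\ref{thm:BNS}), so your proposal has to be measured against the B\"ohm--Nill--Szlachányi argument. Your treatment of the ($\Leftarrow$) direction of the second assertion is sound in outline: non-degeneracy makes $\phi \mapsto \Lambda_1\phi(\Lambda_2)$ bijective, and the identity $\Lambda_1 \otimes x\Lambda_2 = S(x)\Lambda_1 \otimes \Lambda_2$ (the same one the paper exploits to derive \eqref{eq:Lambda-S}) together with bijectivity of $S$ upgrades it to a right $H$-module isomorphism $H^* \cong H$, i.e., the Frobenius property.

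The self-injectivity argument, however, has a fatal flaw: $H \supseteq H_t$ is \emph{not} a Frobenius extension in general, so the ``Frobenius system'' $(E, \text{dual bases from }\Delta(\ell))$ you propose cannot be built from an arbitrary nonzero left integral $\ell$ --- the dual-basis equations are exactly where non-degeneracy of $\ell$ is needed, and non-degenerate integrals are precisely what may fail to exist. Indeed, your argument proves too much: $H_t$ is a separable, hence semisimple, hence Frobenius (even symmetric) $\kk$-algebra, and Frobenius extensions are transitive (if $A/B$ is a Frobenius extension and $B$ is a Frobenius algebra, then $A$ is a Frobenius algebra), so your claim would force \emph{every} finite-dimensional weak Hopf algebra to be Frobenius, contradicting the second half of the very theorem and the existence of non-Frobenius weak Hopf algebras that motivates this paper (see the references \cite{IK} and \cite[Section~9.2]{HH} cited in the introduction; in fact Iovanov--Kadison show $H/H_t$ is a Frobenius extension \emph{precisely when} $H$ is a Frobenius algebra). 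The actual route in \cite{BNS} avoids this circularity: by the fundamental theorem of Hopf modules for weak Hopf algebras, $H^* \cong I^L(H^*) \otimes_{H_t} H$ as right $H$-modules; since $H_t$ is separable, $I^L(H^*)$ is a projective $H_t$-module, so $H^*$ is a direct summand of $H^{\oplus n}$; dualizing, $H$ is a direct summand of $(H^*)^{\oplus n}$ as a left $H$-module, and since $H^* = \mathrm{Hom}_\kk(H,\kk)$ is an injective $H$-module, $H$ is self-injective. (This same Hopf-module machinery is also what repairs your ($\Rightarrow$) direction in part two, which is underspecified as written: the element dual to an \emph{arbitrary} Frobenius functional need not be a left integral; one must take the Frobenius functional to be a non-degenerate left integral of the dual weak Hopf algebra $H^*$, which is the content of \cite[Theorem~3.18]{BNS}, cf.\ Theorem~\ref{thm:BNS}(d).)
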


See \cite{IK} and \cite[Section~9.2]{HH} for a study of finite-dimensional weak Hopf algebras that are not counital Frobenius.
Now we fulfill our goal for finite-dimensional weak Hopf algebras as described below.

\begin{theorem}[Theorem~\ref{thm:main-weak}] \label{thm:main-weak-intro} Finite-dimensional weak Hopf algebras $H$ are non-counital Frobenius  via an explicitly defined nonzero comultiplication map $\Delta: H \to H\otimes H$. In particular, $\Delta$ is counital precisely when $H$ is Frobenius (e.g., when $H$ has a non-degenerate left integral). 
\end{theorem}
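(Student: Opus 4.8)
Since a finite-dimensional weak Hopf algebra need not be Nakayama, \Cref{thm:main-intro} does not apply to it directly; so the plan is to build $\Delta$ from the weak Hopf data of $H$ itself, which is also what makes $\Delta$ explicit. By \Cref{thm:BNS-intro}, $H$ is self-injective, and the only obstruction to a genuine Frobenius structure is the possible degeneracy of a left integral. First I would fix a nonzero left integral $\ell \in H$ (in the sense of \Cref{def:integral}), write $\Delta_{\textnormal{wk}}(\ell) = \sum \ell_{(1)} \otimes \ell_{(2)}$ for the weak comultiplication, and propose the candidate
\[
c := \Delta(1) = \sum \ell_{(1)} \otimes S(\ell_{(2)}) \in H \otimes H,
\qquad
\Delta(h) := (h \otimes 1)\, c,
\]
built from the antipode $S$, with any correction by the counital maps $\varepsilon_s,\varepsilon_t$ needed to make the formula land correctly absorbed into the definition. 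Since \eqref{eq:Delta-m} forces $\Delta(h)=h\,\Delta(1)=\Delta(1)\,h$, the whole structure is encoded in the single element $c$, and the three requirements of \Cref{defthm:Frob} become three conditions on $c$.

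The $A$-bimodule condition \eqref{eq:Delta-m} reduces to showing that $c$ is \emph{balanced}, i.e.
\[
\sum h\,\ell_{(1)} \otimes S(\ell_{(2)}) \;=\; \sum \ell_{(1)} \otimes S(\ell_{(2)})\,h
\qquad (h \in H).
\]
I would derive this from the defining identity of a left integral together with the antipode axioms of a weak Hopf algebra (\Cref{def:weak}): the left-integral property lets one move $h$ past $\ell_{(1)}$ at the cost of a target map $\varepsilon_t$, and the antipode axiom then converts this into right multiplication on $S(\ell_{(2)})$. Once $c$ is balanced, both halves of \eqref{eq:Delta-m} follow formally.

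Writing $c=\sum_i x_i\otimes y_i$, coassociativity $(\Delta\otimes\id)\Delta=(\id\otimes\Delta)\Delta$ unwinds, using $\Delta(h)=hc=ch$, to the single identity
\[
\sum_{i,j} x_i x_j \otimes y_j \otimes y_i \;=\; \sum_{i,j} x_i \otimes x_j \otimes y_j y_i
\]
in $H^{\otimes 3}$. This is the step I expect to be the main obstacle. In the genuinely Frobenius case the copairing coming from a non-degenerate form is automatically coassociative, but here the identity must be extracted from coassociativity of $\Delta_{\textnormal{wk}}$ together with the anti-comultiplicativity of $S$, which in the weak setting is twisted by the fact that $\Delta_{\textnormal{wk}}(1)\neq 1\otimes 1$. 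Tracking the counital subalgebras $H_s,H_t$ and the maps $\varepsilon_s,\varepsilon_t$ cleanly through these manipulations, and fixing the precise correction terms in the definition of $c$, is the delicate part of the argument.

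Finally, for the counit assertion I would show that a map $\varepsilon\colon H\to\kk$ with $(\varepsilon\otimes\id)\Delta=\id=(\id\otimes\varepsilon)\Delta$ exists precisely when the pairing $(a,b)\mapsto\varepsilon(ab)$ associated to $c$ is non-degenerate, and that this non-degeneracy is equivalent to non-degeneracy of the chosen left integral $\ell$. Combined with \Cref{thm:BNS-intro}, this yields the chain: $\Delta$ is counital $\iff$ $H$ has a non-degenerate left integral $\iff$ $H$ is Frobenius, which is the last claim of the statement. In the degenerate case the same element $c$ still satisfies the balanced and coassociative conditions, so $H$ remains non-counital Frobenius, as asserted.
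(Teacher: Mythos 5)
Your construction coincides with the paper's: the same Casimir element $c=\Lambda_1\otimes S(\Lambda_2)$ built from a nonzero left integral (which exists by Theorem~\ref{thm:BNS}(b)), the same comultiplication $\Delta(h)=(h\otimes 1)c=c(1\otimes h)$ as in \eqref{eq:Delta-weak}, the same balanced identity \eqref{eq:Lambda-S} (which the paper obtains by citing \cite[Lemma~3.2]{BNS} and applying $\id\otimes S$ plus bijectivity of $S$, rather than re-deriving it from scratch), and the same endgame of tying counitality of $\Delta_\Lambda$ to non-degeneracy of $\Lambda$ and hence, via Theorem~\ref{thm:BNS}(c), to $H$ being Frobenius. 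So the skeleton is right.

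The genuine gap sits exactly where you announce "the main obstacle": coassociativity does \emph{not} need to be extracted from coassociativity of $\Delta_{\textnormal{wk}}$ and anti-comultiplicativity of $S$, there are no correction terms by $\varepsilon_s,\varepsilon_t$ to fix in $c$, and the subalgebras $H_s,H_t$ never enter. Coassociativity is a purely formal consequence of the balanced condition alone, valid for any algebra with a balanced Casimir element; this is the paper's Lemma~\ref{lem:coassoc}, and in your own notation it is a two-step computation. Writing $c=\sum_i x_i\otimes y_i$ and using balancedness $\sum_i hx_i\otimes y_i=\sum_i x_i\otimes y_ih$ twice:
\begin{align*}
\sum_{i,j} x_ix_j\otimes y_j\otimes y_i
&= \sum_{i,j} x_j\otimes y_jx_i\otimes y_i
&&\text{(balancedness in factors $1,2$ with $h=x_i$)}\\
&= \sum_{i,j} x_j\otimes x_i\otimes y_iy_j
&&\text{(balancedness in factors $2,3$ with $h=y_j$)}\\
&= \sum_{i,j} x_i\otimes x_j\otimes y_jy_i
&&\text{(relabel $i\leftrightarrow j$),}
\end{align*}
which is precisely the identity you reduced coassociativity to. In other words, once you have the balanced identity --- which you need anyway for \eqref{eq:Delta-m} --- the entire non-counital Frobenius structure is automatic; the weak Hopf axioms are used only to produce a balanced Casimir element from an integral, and again in the counitality $\Leftrightarrow$ non-degeneracy step. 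As written, your proposal leaves its self-declared hardest step unresolved and directs the effort at machinery the proof never needs, so it is incomplete until you either make the observation above or carry out the (unnecessary) weak-Hopf computation you sketch.
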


Background material on weak Hopf algebras  and the proof of Theorem~\ref{thm:main-weak-intro} can be found in Section~\ref{sec:weak}. We also provide many examples of this result in Section~\ref{sec:weak-ex}, especially for {\it groupoid algebras}, and for {\it quantum transformation groupoids}.

\subsection{Further directions} Our two results above, Theorems~\ref{thm:main-intro} and~\ref{thm:main-weak-intro}, prompt the following conjecture, progress of which is illustrated in Diagram~1 below.

\begin{conjecture} \label{main-conj}
Self-injective algebras are non-counital Frobenius. In particular, given a class of self-injective algebras, there exists a nonzero comultiplication map $\Delta$ for the members of the class yielding  non-counital Frobenius structures, such that $\Delta$ is counital precisely when a member is Frobenius.
\end{conjecture}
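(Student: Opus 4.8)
The plan is to attack the conjecture through the Skowro\'{n}ski--Yamagata structure theory, reducing the general self-injective case to the endomorphism algebras already treated. By Theorem~\ref{thm:SY}, any self-injective algebra $A$ is isomorphic to $B(m_0,\dots,m_{n-1}) = \End_B(M_0 \oplus \cdots \oplus M_{n-1})$ with $M_i = P_i^{\oplus m_i}$ and $B$ basic self-injective. The first key observation is that $B$ is itself Frobenius: taking all multiplicities equal to $1$ realizes $B \cong B(1,\dots,1)$, and the criterion $m_i = m_{\nu(i)}$ of Theorem~\ref{thm:SY} holds trivially since $1 = 1$. Thus $B$ comes equipped with a counital Frobenius comultiplication $\Delta_B \colon B \to B \otimes B$ and Frobenius form $\lambda_B \colon B \to \kk$, and the whole problem becomes one of transporting this structure to $A = \End_B(M)$, where $M := \bigoplus_i P_i^{\oplus m_i}$ is a progenerator.

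Next I would fix the block description $A \cong \bigoplus_{i,j} \mathrm{Mat}_{m_j \times m_i}\!\big(\mathrm{Hom}_B(P_i, P_j)\big)$, under which composition in $A$ is matrix multiplication entangled with composition of $B$-homomorphisms, and $B$ itself is recovered as the all-$1\times 1$ case $\bigoplus_{i,j}\mathrm{Hom}_B(P_i,P_j)$. The comultiplication $\Delta_A$ would then be assembled from two ingredients: the Frobenius comultiplication $\Delta_B$ acting on the $\mathrm{Hom}_B(P_i,P_j)$-entries, and a rectangular ``matrix comultiplication'' on the multiplicity indices, modelled on the matrix-coalgebra rule $e_{ab} \mapsto \sum_c e_{ac}\otimes e_{cb}$ but adapted to the unequal block sizes $m_i \neq m_j$. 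This is exactly the uniform source of non-counitality: a candidate counit $\varepsilon_A = \lambda_B \otimes (\text{matrix trace})$ collapses the multiplicity index correctly only when the sizes paired by the Nakayama permutation agree, that is, when $m_i = m_{\nu(i)}$, recovering both the Frobenius criterion of Theorem~\ref{thm:SY} and the pattern already seen in Theorems~\ref{thm:main-intro} and~\ref{thm:main-weak-intro}. With $\Delta_A$ so defined, I would verify coassociativity and the bimodule condition \eqref{eq:Delta-m} block-by-block, reducing each to the corresponding identity for $\Delta_B$ together with the bookkeeping of the rectangular matrix comultiplication.

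The hard part will be the step involving a general basic $B$. In the NSY setting of Notation~\ref{not:Bnl} the algebra $B = B_{n,\ell}$ is uniserial, so the spaces $\mathrm{Hom}_B(P_i,P_j)$ and their composition are completely explicit, which is what makes the comultiplication writable in closed form; for an arbitrary basic self-injective $B$ there is no such uniform description, and verifying coassociativity and \eqref{eq:Delta-m} without it is the central difficulty. A more conceptual route that might bypass the combinatorics is to phrase everything in terms of a coassociative ``Casimir'' element: a bimodule map $\Delta_A$ satisfying \eqref{eq:Delta-m} is the same datum as an element $\Delta_A(1) = \sum_k x_k \otimes y_k \in A \otimes A$ with $\sum_k a\, x_k \otimes y_k = \sum_k x_k \otimes y_k\, a$ for all $a \in A$, and one would seek such an element that is in addition coassociative, produced by transporting $\Delta_B(1)$ through the Morita context $\big(M, \mathrm{Hom}_B(M,B)\big)$ relating $B$ and $A$. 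The obstruction here is precisely that Frobenius structure is not Morita-invariant: the transport of a coassociative Casimir element need not remain coassociative once the multiplicities are unequal, and controlling this failure---showing that a corrected element survives---is where I expect the real work, and possibly the need for ideas beyond the two classes settled here, to lie.
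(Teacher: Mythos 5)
You set out to prove Conjecture~\ref{main-conj}, but note that the paper itself offers no proof of this statement: it is genuinely left open there, with only two special cases established (NSY algebras in Theorem~\ref{thm:main}, weak Hopf algebras in Theorem~\ref{thm:main-weak}). Your proposal does not close it either; it is a research plan whose decisive step is explicitly deferred, as you concede in your final paragraph. The parts that are correct and sensible: the reduction through Theorem~\ref{thm:SY}, and the observation that a basic self-injective algebra $B \cong B(1,\dots,1)$ is Frobenius because the criterion $m_i = m_{\nu(i)}$ holds trivially. What is missing is the construction itself: the ``rectangular matrix comultiplication'' on $\End_B\bigl(\bigoplus_i P_i^{\oplus m_i}\bigr)$ is never actually defined for an arbitrary basic self-injective $B$, and the clause carrying all of the content of the conjecture --- that $\Delta$ is counital \emph{precisely} for the Frobenius members --- is not verified for it. Keep in mind that the bare existence half of the statement is nearly vacuous: $\Delta = 0$ is coassociative and satisfies \eqref{eq:Delta-m} on any algebra, so the entire burden lies in producing a canonical $\Delta$ that becomes counital exactly in the Frobenius case.

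Your diagnosis of where the difficulty sits is also slightly misplaced, in a way that matters. By Lemma~\ref{lem:coassoc}, a bimodule map $\Delta$ satisfying \eqref{eq:Delta-m} is the same datum as a Casimir element $\Delta(1_A)$ satisfying \eqref{eq:Delta1}, and coassociativity is then \emph{automatic}; so there is no issue of a transported Casimir element ``failing to remain coassociative.'' The genuine obstructions are different: (i) the Morita transport of the bimodule $B \otimes_\kk B$ along the context $\bigl(M, \textnormal{Hom}_B(M,B)\bigr)$ is $M \otimes_\kk \textnormal{Hom}_B(M,B)$, not $A \otimes_\kk A$, so $\Delta_B(1_B)$ does not even land in $A \otimes A$ without further non-canonical choices (a dual-basis insertion), and different choices give different candidate elements; and (ii) the hard direction of counitality: counital $\Rightarrow$ Frobenius is immediate from Definition-Theorem~\ref{defthm:Frob}, but Frobenius $\Rightarrow$ \emph{this particular} $\Delta$ is counital requires showing that the chosen Casimir element is nondegenerate whenever any nondegenerate one exists. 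The paper manages (ii) only through structure peculiar to its two classes --- the explicit uniserial combinatorics of $B_{n,\ell}$ in Theorem~\ref{thm:main}, and integrals in Theorem~\ref{thm:main-weak} --- and your sketch supplies no substitute mechanism for a general basic self-injective $B$. So the conjecture remains exactly as open after your proposal as before it.
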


\vspace{.1in}

\begin{center}

\scalebox{0.8}{

\tikzset{every picture/.style={line width=0.75pt}} %set default line width to 0.75pt        

\begin{tikzpicture}[x=0.75pt,y=0.75pt,yscale=-1,xscale=1]
%uncomment if require: \path (0,227); %set diagram left start at 0, and has height of 227

%Shape: Ellipse [id:dp4751752409203429] 
\draw   (3.34,100.01) .. controls (3.34,47.92) and (131.68,5.68) .. (290,5.68) .. controls (448.32,5.68) and (576.67,47.92) .. (576.67,100.01) .. controls (576.67,152.11) and (448.32,194.35) .. (290,194.35) .. controls (131.68,194.35) and (3.34,152.11) .. (3.34,100.01) -- cycle ;
%Shape: Ellipse [id:dp6364750532652217] 
\draw   (149.34,112.01) .. controls (149.34,73.54) and (213.06,42.35) .. (291.67,42.35) .. controls (370.28,42.35) and (434,73.54) .. (434,112.01) .. controls (434,150.49) and (370.28,181.68) .. (291.67,181.68) .. controls (213.06,181.68) and (149.34,150.49) .. (149.34,112.01) -- cycle ;
%Shape: Ellipse [id:dp1152931267582964] 
\draw   (35.34,124) .. controls (35.34,98.42) and (97.42,77.68) .. (174,77.68) .. controls (250.59,77.68) and (312.67,98.42) .. (312.67,124) .. controls (312.67,149.58) and (250.59,170.32) .. (174,170.32) .. controls (97.42,170.32) and (35.34,149.58) .. (35.34,124) -- cycle ;
%Shape: Ellipse [id:dp2721023655533157] 
\draw   (270,123.33) .. controls (270,97.75) and (332.09,77.01) .. (408.67,77.01) .. controls (485.25,77.01) and (547.34,97.75) .. (547.34,123.33) .. controls (547.34,148.91) and (485.25,169.65) .. (408.67,169.65) .. controls (332.09,169.65) and (270,148.91) .. (270,123.33) -- cycle ;

% Text Node
\draw (230,57) node [anchor=north west][inner sep=0.75pt]  [font=\Large] [align=left] {{\fontfamily{ptm}\selectfont Frobenius algebras}};
% Text Node
\draw (218,17) node [anchor=north west][inner sep=0.75pt]  [font=\Large] [align=left] {{\fontfamily{ptm}\selectfont self-injective algebras}};
% Text Node
\draw (317,100) node [anchor=north west][inner sep=0.75pt]  [font=\normalsize] [align=left] {\begin{minipage}[lt]{75pt}\setlength\topsep{0pt}
\begin{center}
{\fontfamily{ptm}\selectfont {\normalsize  \ \ with ... }}\\{\fontfamily{ptm}\selectfont {\normalsize non-degenerate}}\\{\fontfamily{ptm}\selectfont {\normalsize left integral}}
\end{center}

\end{minipage}};
% Text Node
\draw (428,100) node [anchor=north west][inner sep=0.75pt]  [font=\normalsize] [align=left] {\begin{minipage}[lt]{76.13pt}\setlength\topsep{0pt}
\begin{center}
{\fontfamily{ptm}\selectfont {\normalsize ... finite-dim'l \ \ \ \ }}\\{\fontfamily{ptm}\selectfont {\normalsize weak Hopf algebras}}
\end{center}

\end{minipage}};
% Text Node
\draw (55,100) node [anchor=north west][inner sep=0.75pt]  [font=\normalsize] [align=left] {\begin{minipage}[lt]{68pt}\setlength\topsep{0pt}
\begin{center}
{\fontfamily{ptm}\selectfont {\normalsize NSY algebras ... }}
\end{center}

\end{minipage}};
% Text Node
\draw (48,120) node [anchor=north west][inner sep=0.75pt]  [font=\normalsize]  {$B( m_{0} ,...,m_{n}{}_{-1})$};
% Text Node
\draw (145,98) node [anchor=north west][inner sep=0.75pt]  [font=\normalsize]  {$ \begin{array}{rl}
...\; m_{i} \ &=m_{\nu }{}_{( i)}, \ \\
\nu ( i) &=\ i+\ell -1\ \\
\ \ \ \ \ \ &\ \ \ \ \ \ \bmod \ n\\
\end{array}$};

\end{tikzpicture}
}

%\begin{center}
%\includegraphics[scale=.15]{Diagram-1-v7.PNG}\\
\vspace{.15in}
{\small Diagram 1:  Conjecture~\ref{main-conj} holds for NSY-algebras [Theorem~\ref{thm:main-intro}]\\ and for finite-dimensional weak Hopf algebras [Theorem~\ref{thm:main-weak-intro}].} 
\end{center}

\vspace{.15in}

The conjecture is likely to hold because the techniques used to resolve the conjecture for the NSY algebras could be close to its resolution in the general case. Indeed, the bound quiver algebras $B_{n,\ell}$ form a  prototypical class of basic self-injective algebras $B$.

\smallskip

In fact, the conclusion of Conjecture~\ref{main-conj} has been achieved for another generalization of Frobenius  algebras: gendo-Frobenius algebras,  introduced recently by Yırtıci. See \cite[Theorem~4.3]{Yir}. 
Here, an algebra is said to be {\it Morita} if it is isomorphic to the endomorphism algebra of a finite-dimensional, faithful (right) module $M$ over a self-injective algebra $B$  \cite{KerYam}. Further, an algebra is called {\it gendo-Frobenius} if it is Morita under the conditions that $B$ is Frobenius and that $M \cong M_{\nu_B}$ as right $B$-modules, where $\nu_B$ is the {\it Nakayama automorphism} of $B$. 
With this observation, we end with a few directions for further research.

\begin{question}
Does the conclusion of Conjecture~\ref{main-conj} hold for Morita algebras?
\end{question}

\begin{question}
What are the intersections between the classes of algebras above for which Conjecture~\ref{main-conj} holds? For instance, what are examples of {\it NSY weak Hopf algebras}?
\end{question}

\begin{question}
What are the physical uses of non-counital Frobenius algebras, say, akin to the use of (counital) Frobenius algebras in 2-dimensional TQFTs?
\end{question}

See, e.g., work of Cohen and Godin \cite{CohenGodin} for connections between non-counital Frobenius algebras and  {\it  positive
boundary 2-dimensional TQFTs}.

%%%%%%%%%%%%%%%%%%%%%%%%%%%%%%%%%%%%%%%%%%%%%%%%%%%%%%%%%%%%%%%%%%%%%
%%%%%%%%%%%%%%%%%%%%%%%%%%%%%%%%%%%%%%%%%%%%%%%%%%%%%%%%%%%%%%%%%%%%%
%%%%%%%%%%%%%%%%%%%%%%%%%%%%%%%%%%%%%%%%%%%%%%%%%%%%%%%%%%%%%%%%%%%%

\section{NSY algebras are non-counital Frobenius}  \label{sec:mainresult}
In this section, we establish an explicit non-counital Frobenius structure for the NSY algebras [Definition~\ref{def:NSY}], thereby proving Conjecture~\ref{main-conj} for a large class of self-injective algebras. Notation and preliminary results are provided in Section~\ref{sec:prelim}; the basis of the NSY algebras is discussed in Section~\ref{sec:basis} (see Proposition~\ref{prop:Bmi-basis}); and the unital, multiplicative structure of the NSY algebras is provided in Section~\ref{sec:algebra} (see Proposition~\ref{prop:Bmi-alg}). The main result on the non-counital Frobenius structure of the NSY algebras is then presented in Section~\ref{sec:Frobenius}, incorporating the results above (see Theorem~\ref{thm:main}).

 \subsection{Preliminaries} \label{sec:prelim}  To proceed, recall the notation in \eqref{eq:B(m_i)} and Notation~\ref{not:Bnl} for the NSY algebras $B_{n,\ell}(m_0, \dots, m_{n-1})$  in Definition~\ref{def:NSY}.

\begin{notation} \label{not:X}
Let $e_i$ denote the trivial path at vertex $i$ of $Q_0$. Consider the decomposition of $B_{n, \ell}$ into a direct sum of indecomposable right $B_{n, \ell}$-modules, $\bigoplus_{i=0}^{n-1} P_i$, for $P_i := e_i B_{n,\ell}$. 
\begin{itemize}
    \item Let the basis of $B_{n,\ell}$ be paths of $Q$ starting at vertex $i$ of length $k$, denoted by
$$\alpha_{i,k}:= \alpha_i \; \alpha_{i+1} \; \cdots \; \alpha_{i+k-1}  \quad \text{ for } 
0 \leq i \leq n-1, \; 0 \leq k \leq \ell-1.$$ 
Here, $\alpha_{i,0} = e_i$. In particular, the basis of $P_i$ is given by $\{\alpha_{i,k}\}_{0 \leq k \leq \ell-1}$.
\medskip
\item Let $P_i^{r_i}$ be the $r_i$-th copy of $P_i$, for $0 \leq r_i \leq m_i - 1$, and denote its basis by $\{\alpha^{r_i}_{i,k}\}_{0 \leq k \leq \ell-1}$. Moreover, $P_i^{r_i}$ is a right $B_{n,\ell}$-module via
\begin{equation} \label{eq:P-act}
\alpha^{r_i}_{i,k} \cdot \beta = (\alpha_{i,k} \cdot \beta)^{r_i}, \quad \text{for}  \quad \beta \in B_{n,\ell}.
\end{equation}

\smallskip

\item Consider the maps
\begin{equation} \label{eq:Xij}
X_{i,j}^{r_i, s_{i+j}}: P_{i+j}^{s_{i+j}} \longrightarrow P_i^{r_i}, \quad  \alpha^{s_{i+j}}_{i+j,k} \mapsto  (\alpha_{i,j} \cdot \alpha_{i+j,k})^{r_i} = \alpha_{i, j+k}^{r_i}.
\end{equation}
That is, $X_{i,j}^{r_i, s_{i+j}}$ is pre-composition by the path $\alpha_{i,j}$ (in the appropriate copy of $P_i$). 

\medskip

\item Extend $X_{i,j}^{r_i, s_{i+j}}$ to an endomorphism of $\bigoplus_{i,r_i} P_i^{r_i}$ by setting 
$$X_{i,j}^{r_i,s_{i+j}}(P_a^{s_a}) = 0, \quad \text{for \;$a \neq i+j, \; \;s_a \neq s_{i+j}$.}$$

\medskip

\item Indices are here as follows: $n,\; \ell,\; m_0, \dots, m_{n-1}$ are fixed; $\; i, \; a$ are taken modulo $n$; and $0 \leq j,k,b \leq \ell-1, \; \;   0 \leq r_i \leq m_i -1, \; \;  0 \leq s_{i+j} \leq m_{i+j} -1.$ 
\end{itemize}
\end{notation}

Next, we compute the  Nakayama permutation \cite[page~377]{SYbook} \cite[page~136]{SYpaper} of the NSY algebras, as this is needed to determine  when such algebras are  Frobenius [Theorem~\ref{thm:SY}]. We refer to \cite[Section~I.3]{ASSbook} for background on radicals, tops, and socles of modules of finite-dimensional algebras.

 \begin{proposition} \label{prop:Nak-perm}
 The Nakayama permutation $\nu$ of $B_{n, \ell}$ is the permutation of $\{1, \dots, n\}$ given by 
 $\nu(i) = i + \ell -1 \text{ modulo $n$}.$
 \end{proposition}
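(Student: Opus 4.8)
The plan is to compute $\nu$ directly from its defining property in terms of the socles of the indecomposable projective modules. Since $B_{n,\ell}$ is basic, connected, Nakayama, and self-injective (as cited above, following \cite{SYbook}), each indecomposable projective right module $P_i = e_i B_{n,\ell}$ is also injective, so the Nakayama permutation $\nu$ is well-defined and characterized by $\operatorname{soc}(P_i) \cong \operatorname{top}(P_{\nu(i)})$; equivalently, writing $S_j := \operatorname{top}(P_j)$ for the simple right module supported at vertex $j$, one has $\operatorname{soc}(P_i) \cong S_{\nu(i)}$. Thus it suffices to identify, for each $i$, the simple module occurring as $\operatorname{soc}(P_i)$.

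To do this I would exploit the uniserial structure of $P_i$ recorded in Notation~\ref{not:X}. By definition $P_i$ has $\kk$-basis $\{\alpha_{i,k}\}_{0 \le k \le \ell-1}$, and its radical filtration is $\operatorname{rad}^j P_i = \operatorname{span}_{\kk}\{\alpha_{i,k} : j \le k \le \ell-1\}$, because right multiplication by an arrow raises path length by one while every path of length $\ge \ell$ vanishes in $\mathcal{I}_\ell = R^\ell$. Hence $P_i$ is uniserial of length $\ell$, and its socle is the one-dimensional bottom layer $\operatorname{rad}^{\ell-1} P_i = \operatorname{span}_{\kk}\{\alpha_{i,\ell-1}\}$.

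It then remains to determine which simple module this socle is. The basis vector $\alpha_{i,\ell-1}$ is the path of length $\ell-1$ starting at vertex $i$ and terminating at vertex $i+\ell-1$ (reading left to right). As a right-module element it is annihilated by every arrow, since any such product has length $\ell$ and is therefore zero, and it is fixed by the idempotent $e_{i+\ell-1}$ at its terminal vertex while killed by all other $e_j$; consequently $\operatorname{soc}(P_i) \cong S_{i+\ell-1 \bmod n}$. Combining this with the characterization above yields $\nu(i) = i + \ell - 1 \bmod n$, as claimed, and in turn makes the Frobenius criterion $m_i = m_{\nu(i)}$ of Theorem~\ref{thm:SY} explicit.

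The radical-series bookkeeping is routine; the one point demanding care — and the main potential pitfall — is the index convention. One must track that the socle of a right projective sits over the \emph{terminal} vertex of its longest surviving path (a consequence of reading paths left to right and acting on the right), reduce $i+\ell-1$ modulo $n$ correctly on the cyclic quiver, and confirm that the resulting assignment matches the orientation of the Nakayama permutation used in Skowro\'{n}ski--Yamagata (socle versus top, right- versus left-module conventions), so that the criterion is stated with the correct permutation and not its inverse.
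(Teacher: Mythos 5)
Your proposal is correct and follows essentially the same route as the paper: both identify $\operatorname{soc}(P_i)$ as the one-dimensional span of $\alpha_{i,\ell-1}$, recognize it as the simple right module supported at the terminal vertex $i+\ell-1$ (via the right action of the idempotents and the vanishing of length-$\ell$ paths), and match it against $\operatorname{top}(P_{\nu(i)})$ to read off $\nu(i)=i+\ell-1 \bmod n$. Your additional remarks on the full radical filtration and uniseriality are fine but not needed beyond the single layer $\operatorname{rad}(P_i)$ used in the paper.
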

 
 \begin{proof}
 Recall Notation~\ref{not:X}; in particular, the indecomposable right $B_{n, \ell}$-modules are of the form $P_i  = e_i B_{n,\ell} =  \bigoplus_{k=0}^{\ell-1} \kk \alpha_{i,k}$, with $\alpha_{i,0} = e_i$. By the definition on  \cite[page 136]{SYpaper}, we need to compute the permutation of $\nu$ of $\{1, \dots, n\}$ such that 
\[
\text{soc}(e_i B_{n,\ell}) \; \cong \; \text{top}(e_{\nu(i)} B_{n,\ell}).
\]
 Now rad($P_i$)=$\bigoplus_{k=1}^{\ell-1} \kk \alpha_{i,k}$. So,  we obtain that top($P_i$) = $P_i/\text{rad}(P_i)$ = $\kk e_i$. On the other hand, soc($P_i$) = $\kk \alpha_{i, \ell-1} \cong \kk e_{i+\ell -1}$ as right $B_{n,\ell}$-modules. Here, $\kk e_{i+\ell -1}$ is the right  $B_{n, \ell}$-module with action $ e_{i+\ell -1} \cdot \alpha_{j,k} =\delta_{i+\ell-1, j} \;\delta_{k,0}\; e_{i + \ell -1}$. So, $\nu(i) = i + \ell -1$ modulo $n$. 
 \end{proof}

%%%%%%%%%%%%%%%%%%%%%%%%%%%%%%

 \subsection{Basis} \label{sec:basis}
Next, we show that the maps $\{X_{i,j}^{r_i,s_{i+j}}\}$ from Notation~\ref{not:X} form a basis of a given NSY algebra $B_{n,\ell}(m_0,\dots,m_{n-1})$. Consider the following preliminary results.

\begin{lemma}
We have that $X_{i,j}^{r_i,s_{i+j}}\in B_{n,\ell}(m_0, \dots, m_{n-1}) $.
\end{lemma}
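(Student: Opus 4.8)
The plan is to unwind the definition of membership and then exhibit $X_{i,j}^{r_i,s_{i+j}}$ as an extension by zero of left multiplication by a path, which is automatically a module homomorphism. By \eqref{eq:B(m_i)} and Definition~\ref{def:NSY}, the algebra $B_{n,\ell}(m_0,\dots,m_{n-1})$ is $\End_{B_{n,\ell}}\big(\bigoplus_{i,r_i} P_i^{r_i}\big)$, so proving the lemma amounts to checking that $X_{i,j}^{r_i,s_{i+j}}$ is a well-defined right $B_{n,\ell}$-module endomorphism of $\bigoplus_{i,r_i}P_i^{r_i}$.

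First I would record that the map is well defined as a $\kk$-linear endomorphism. On basis elements it is given by $\alpha^{s_{i+j}}_{i+j,k}\mapsto \alpha_{i,j+k}^{r_i}$ and by $0$ on the remaining summands; since $\alpha_{i,j+k}=0$ in $B_{n,\ell}$ whenever $j+k\geq\ell$ (as $\mathcal I_\ell=R^\ell$), the right-hand side is always a genuine element of $P_i^{r_i}$, so no ambiguity arises.

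The key observation is that each $P_a^{s_a}\cong e_aB_{n,\ell}$ is a right $B_{n,\ell}$-submodule of the direct sum (indeed $\alpha_{a,k}\cdot\beta\in e_aB_{n,\ell}$ for all $\beta$), and that the restriction of $X_{i,j}^{r_i,s_{i+j}}$ to the summand $P_{i+j}^{s_{i+j}}$ is nothing but left multiplication by the path $\alpha_{i,j}\in B_{n,\ell}$, after identifying $P_{i+j}^{s_{i+j}}\cong P_{i+j}$ and $P_i^{r_i}\cong P_i$ as right modules via the copy-index relabeling in \eqref{eq:P-act}. Indeed, for $x\in P_{i+j}=e_{i+j}B_{n,\ell}$ one has $\alpha_{i,j}\,x=e_i\alpha_{i,j}e_{i+j}\,x\in e_iB_{n,\ell}=P_i$, so this left multiplication does land in $P_i$, matching \eqref{eq:Xij}. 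Since left multiplication by a fixed algebra element commutes with the right action by associativity, $(\alpha_{i,j}\,x)\cdot\beta=\alpha_{i,j}\,(x\cdot\beta)$, the restriction is a right $B_{n,\ell}$-module map; on every other summand $X_{i,j}^{r_i,s_{i+j}}$ is zero, hence trivially a module map.

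Finally I would assemble these pieces: a $\kk$-linear map of a direct sum of submodules that restricts to a module homomorphism on each summand is itself a right $B_{n,\ell}$-module homomorphism, so $X_{i,j}^{r_i,s_{i+j}}\in\End_{B_{n,\ell}}\big(\bigoplus_{i,r_i}P_i^{r_i}\big)=B_{n,\ell}(m_0,\dots,m_{n-1})$. I do not expect a genuine obstacle here; the only point requiring care is the bookkeeping of the copy indices $r_i,s_{i+j}$ and the verification that truncation by $R^\ell$ is respected simultaneously on both sides of the module-map identity, both of which are automatic because $R^\ell$ is a two-sided ideal.
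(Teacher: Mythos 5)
Your proof is correct and takes essentially the same route as the paper's: both reduce the right-module-map property to associativity of multiplication in $B_{n,\ell}$, i.e., to the fact that left multiplication by the fixed path $\alpha_{i,j}$ commutes with the right action, with the vanishing on the other summands handled trivially. The paper writes this as a single element-wise computation with Kronecker deltas (and uses \eqref{eq:P-act}, \eqref{eq:Xij} exactly where you invoke the copy-index identifications), whereas you package the identical content structurally as ``left multiplication extended by zero, assembled summand-wise''; the underlying argument is the same.
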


\begin{proof}
 We need to show that $X_{i,j}^{r_i, s_{i+j}}$ is a map of right $B_{n,\ell}$-modules. Take $\beta \in B_{n,\ell}$, and take $\theta \in  P_a^{s_a}$ for some $0 \leq a \leq n-1$ and $0 \leq s_a \leq m_a -1$. Then, $\theta = \gamma^{s_a}$, for some path $\gamma$ that starts at vertex $a$. Now we see that $X_{i,j}^{r_i, s_{i+j}}$ is a right $B_{n,\ell}$-module map as follows:
\[
{\small
\begin{array}{rll}
\medskip
X_{i,j}^{r_i, s_{i+j}}(\theta \cdot \beta) 
&\; = \; X_{i,j}^{r_i, s_{i+j}}(\gamma^{s_a} \cdot \beta) 
&\; \overset{\textnormal{\eqref{eq:P-act}}}{=} \; X_{i,j}^{r_i, s_{i+j}}(\gamma \cdot \beta)^{s_a}\\
\medskip
&\overset{\textnormal{\eqref{eq:Xij}}}{=} \; \delta_{i+j,a}\;  \delta_{s_{i+j},s_a}\; (\alpha_{i,j} \cdot \gamma \cdot \beta)^{r_i} 
&\; \overset{\textnormal{\eqref{eq:P-act}}}{=} \; \delta_{i+j,b}\;  \delta_{s_{i+j},s_a}\; [(\alpha_{i,j} \cdot \gamma)^{r_i} \cdot \beta]\\
\medskip
&\; = \; [\delta_{i+j,a}\;  \delta_{s_{i+j},s_a}\; (\alpha_{i,j} \cdot \gamma)^{r_i}] \cdot \beta
&\; = X_{i,j}^{r_i, s_{i+j}}(\gamma^{s_a}) \cdot \beta \\
&\; = \; X_{i,j}^{r_i, s_{i+j}}(\theta) \cdot \beta.
\end{array}
}
\]

\vspace{-.18in}

\end{proof}

\begin{lemma}\label{lem:Bmi-linind}
The elements $X_{i,j}^{r_i,s_{i+j}}$ are linearly independent.
\end{lemma}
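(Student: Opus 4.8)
The plan is to assume a vanishing $\kk$-linear combination of the maps and to recover each coefficient by evaluating at a single, carefully chosen generator. Concretely, suppose
$$\sum c_{i,j}^{r_i,s_{i+j}}\, X_{i,j}^{r_i,s_{i+j}} = 0$$
as an endomorphism of $\bigoplus_{i,r_i} P_i^{r_i}$, the sum ranging over all admissible indices from Notation~\ref{not:X}. Fix an arbitrary admissible tuple $(i_0,j_0,r_0,s_0)$; the goal is to show $c_{i_0,j_0}^{r_0,s_0}=0$, which by arbitrariness of the tuple yields linear independence.

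First I would evaluate the combination at the generator $\alpha_{i_0+j_0,0}^{s_0} = e_{i_0+j_0}^{s_0}$ of the summand $P_{i_0+j_0}^{s_0}$. By the definition \eqref{eq:Xij} of $X_{i,j}^{r_i,s_{i+j}}$ together with its extension by zero, the term indexed by $(i,j,r_i,s_{i+j})$ acts nontrivially on this generator precisely when $i+j \equiv i_0+j_0 \pmod n$ and $s_{i+j}=s_0$, in which case (with $k=0$) it sends $e_{i_0+j_0}^{s_0}$ to $\alpha_{i,j}^{r_i}$. Hence the hypothesis collapses to the single identity
$$\sum_{\substack{i+j\, \equiv\, i_0+j_0 \ (\mathrm{mod}\ n)\\ s_{i+j}=s_0}} c_{i,j}^{r_i,s_0}\, \alpha_{i,j}^{r_i} = 0 \quad \text{in } \textstyle\bigoplus_{i,r_i} P_i^{r_i}.$$
Observe that evaluating at the generator (i.e.\ taking $k=0$) guarantees that each image $\alpha_{i,j}^{r_i}$ with $0 \le j \le \ell-1$ is a genuine nonzero basis vector, thereby sidestepping any vanishing coming from the relation $R^\ell = 0$.

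The crux of the argument, and the step requiring the most care, is to check that the basis vectors $\alpha_{i,j}^{r_i}$ occurring in this identity are \emph{pairwise distinct} elements of the standard basis $\{\alpha_{i,k}^{r_i}\}$ of $\bigoplus_{i,r_i} P_i^{r_i}$. This is exactly where the arithmetic of the $n$-cycle enters: since $0 \le j \le \ell-1$ and $\ell \le n-1 < n$, distinct values of $j$ in the allowed range remain distinct modulo $n$, so the constraint $i \equiv i_0+j_0-j \pmod n$ forces distinct $j$ to produce distinct vertices $i \in \{0,\dots,n-1\}$. Consequently the triple $(i,j,r_i)$ is recovered from the vector $\alpha_{i,j}^{r_i}$ (distinct $j$ give distinct $i$, which with the copy-index $r_i$ determines both the summand $P_i^{r_i}$ and the element inside it), so the assignment $(i,j,r_i)\mapsto \alpha_{i,j}^{r_i}$ is injective on the index set of the sum.

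Once distinctness is established, the displayed relation is a linear dependence among distinct basis vectors, which forces every coefficient $c_{i,j}^{r_i,s_0}$ appearing in it to vanish; in particular $c_{i_0,j_0}^{r_0,s_0}=0$. As $(i_0,j_0,r_0,s_0)$ was arbitrary, all coefficients are zero, proving that the $X_{i,j}^{r_i,s_{i+j}}$ are linearly independent. I expect the only genuinely delicate point to be the combinatorial claim that fixing $i+j$ modulo $n$ while $j$ ranges over $\{0,\dots,\ell-1\}$ separates the vertices $i$; the remainder is a direct unwinding of the definitions in Notation~\ref{not:X}.
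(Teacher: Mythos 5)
Your proposal is correct and follows essentially the same route as the paper's proof: evaluate the vanishing combination at the trivial-path generator $\alpha_{i_0+j_0,0}^{s_0}$, observe that only terms with $i+j \equiv i_0+j_0 \pmod{n}$ and $s_{i+j}=s_0$ survive, and conclude from linear independence of the elements $\alpha_{i,j}^{r_i}$ in $\bigoplus_{i,r_i}P_i^{r_i}$. The only difference is that you single out the pairwise distinctness of the surviving $\alpha_{i,j}^{r_i}$ as the delicate point, whereas it is automatic: distinct triples $(i,j,r_i)$ index distinct members of the standard basis of $\bigoplus_{i,r_i}P_i^{r_i}$ by construction (the summand is determined by $(i,r_i)$ and the position within it by $j$), so the mod-$n$ arithmetic you invoke is not actually needed.
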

\begin{proof}
Suppose not, then there exist scalars 
$ c_{i,j}^{r_i,s_{i+j}} \in \kk$ not all zero such that 
\begin{equation}\label{eq:li1}
    \sum\limits_{i=0}^{n-1} \; \sum\limits_{j=0}^{\ell-1}\; \sum\limits_{r_i=0}^{m_i-1}\; \sum\limits_{s_{i+j}=0}^{m_{i+j}-1} c_{i,j}^{r_i,s_{i+j}} X_{i,j}^{r_i,s_{i+j}} = 0. 
\end{equation}
Since $ c_{i,j}^{r_i,s_{i+j}}$ are not all zero, there exists indices $a$, $b$, $r_a$, $s_{a+b}$ such that $ c_{a,b}^{r_a,s_{a+b}}\neq 0$. Now, evaluate~\eqref{eq:li1} on the element $\alpha_{a+b,0}^{s_{a+b}}$ to get that
{\small
\begin{align*}
    & \sum\limits_{i=0}^{n-1}\; \sum\limits_{j=0}^{\ell-1}\; \sum\limits_{r_i=0}^{m_i-1}\; \sum\limits_{s_{i+j}=0}^{m_{i+j}-1} c_{i,j}^{r_i,s_{i+j}} \; X_{i,j}^{r_i,s_{i+j}} (\alpha_{a+b,0}^{s_{a+b}})\\
    & = 
    \sum\limits_{i=0}^{n-1}\; \sum\limits_{j=0}^{\ell-1}\; \sum\limits_{r_i=0}^{m_i-1}\; \sum\limits_{s_{i+j}=0}^{m_{i+j}-1} c_{i,j}^{r_i,s_{i+j}} \; \delta_{i+j,a+b}\; \delta_{s_{i+j},s_{a+b}}  \alpha_{i,j}^{r_i}  \\
    &= \sum\limits_{i=0}^{n-1} \; \sum\limits_{j=0}^{\ell-1} \;  \sum\limits_{r_i=0}^{m_i-1} c_{i,j}^{r_i,s_{a+b}} \; \delta_{i+j,a+b} \; \alpha_{i,j}^{r_i}   \\
    & = 0.
\end{align*}
}

\noindent Since $\{ \alpha_{i,j}^{r_i} \}$ are linearly independent elements of $\bigoplus_{i,r_i} P_i^{r_i}$, we get that $c_{i,j}^{r_i, s_{a+b}} = 0$ for all $i$, $j$, $r_i$. In particular, for $i = a$ and $j=b$, we get that $c_{a,b}^{r_a,s_{a+b}}=0$. This is a contradiction to our assumption. Thus, the claim follows.
\end{proof}

\begin{lemma}\label{lem:Bmi-maps}
 Every nonzero right $B_{n,\ell}$-module map $\phi$ from $P_a^{r_a}$ to $P_i^{r_i}$ is of the form $\phi(\alpha_{a,k}^{r_a}) = (\alpha_{i,j}^{r_i})\cdot \alpha_{a,k}$ for some $j$ such that $i+j\equiv a\textnormal{ mod }n$.
\end{lemma}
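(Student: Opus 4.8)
The plan is to classify all right $B_{n,\ell}$-module homomorphisms $\phi \colon P_a^{r_a} \to P_i^{r_i}$ by exploiting that $P_a^{r_a}$ is a cyclic module generated by $\alpha_{a,0}^{r_a} = e_a^{r_a}$. Since $\phi$ is a right module map, it is determined entirely by where it sends this generator: for any basis element $\alpha_{a,k}^{r_a} = e_a^{r_a} \cdot \alpha_{a,k}$ we have $\phi(\alpha_{a,k}^{r_a}) = \phi(e_a^{r_a}) \cdot \alpha_{a,k}$. Thus the first step is to argue that specifying $\phi(e_a^{r_a})$ as an element of $P_i^{r_i}$, subject to compatibility with the module structure, pins down $\phi$ completely.

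Next I would analyze the constraints on the target element $\phi(e_a^{r_a})$. Write $\phi(e_a^{r_a}) = \sum_{j} c_j\, \alpha_{i,j}^{r_i}$ for scalars $c_j \in \kk$. The key observation is a grading/idempotent argument: since $e_a^{r_a} = e_a^{r_a} \cdot e_a$, we must have $\phi(e_a^{r_a}) = \phi(e_a^{r_a}) \cdot e_a$, and right multiplication by the trivial path $e_a$ kills every basis vector $\alpha_{i,j}^{r_i}$ whose endpoint is not the vertex $a$. The endpoint of the path $\alpha_{i,j}$ is the vertex $i+j \bmod n$, so the only surviving terms are those with $i+j \equiv a \pmod n$. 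Because $0 \le j \le \ell-1$, at most one value of $j$ satisfies this congruence, so $\phi(e_a^{r_a}) = c_j\, \alpha_{i,j}^{r_i}$ for that unique $j$. This forces the stated form $\phi(\alpha_{a,k}^{r_a}) = c_j\,(\alpha_{i,j}^{r_i}) \cdot \alpha_{a,k}$, and after absorbing the scalar (or noting that scaling by $c_j$ gives exactly the map in the statement up to a scalar, with $c_j \neq 0$ for $\phi$ nonzero) we recover $\phi(\alpha_{a,k}^{r_a}) = (\alpha_{i,j}^{r_i}) \cdot \alpha_{a,k}$.

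Finally, I would verify well-definedness, i.e. that the candidate assignment actually respects the relations of $B_{n,\ell} = \kk Q_{(n)}/R^\ell$, so that no further constraint is imposed beyond the congruence $i+j \equiv a$. This amounts to checking that $\phi$ sends the socle appropriately and that paths of length $\ell$ map to zero on both sides; concretely, $(\alpha_{i,j}^{r_i}) \cdot \alpha_{a,k} = \alpha_{i,j+k}^{r_i}$, which vanishes precisely when $j+k \ge \ell$, matching the vanishing of $\alpha_{a,k}$-action in the source. The main obstacle is the bookkeeping around the admissible ideal $R^\ell$: one must confirm that the single surviving index $j$ genuinely yields a module map (rather than merely a linear map) and handle the modular arithmetic on vertices carefully, since the cyclic structure of $Q_{(n)}$ means the congruence $i+j \equiv a \pmod n$ is what selects the correct \emph{unique} $j$ in the admissible range. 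Once that is settled, the classification is immediate from the cyclic-generator argument.
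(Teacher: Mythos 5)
Your proposal is correct and takes essentially the same route as the paper's proof: both evaluate $\phi$ on the cyclic generator $\alpha_{a,0}^{r_a}$, use $\phi(\alpha_{a,0}^{r_a}) = \phi(\alpha_{a,0}^{r_a})\cdot e_a$ to force the image to end at vertex $a$, and combine this with the fact that elements of $P_i^{r_i}$ are spanned by paths starting at vertex $i$ to conclude $\phi(\alpha_{a,0}^{r_a})$ is (a scalar multiple of) $\alpha_{i,j}^{r_i}$ with $i+j\equiv a \textnormal{ mod } n$. Your added care about the uniqueness of $j$ in the range $0\le j\le \ell-1$ and about the nonzero scalar $c_j$ (which the paper's statement and proof silently absorb) is a refinement of, not a departure from, the paper's argument.
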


\begin{proof}
 Since $\phi$ is a right $B_{n,\ell}$-module map, we have that
\[ 0 \neq \phi(\alpha_{a,0}^{r_a}) \overset{\textnormal{\eqref{eq:P-act}}}{=} \phi( \alpha_{a,0}^{r_a}\cdot \alpha_{a,0} ) = \phi(\alpha_{a,0}^{r_a})\cdot \alpha_{a,0}. \] 
Therefore, $\phi(\alpha_{a,0}^{r_a})$ ends at vertex $a$. Also, $\phi(\alpha_{a,0}^{r_a})\in P_i^{r_i}$ implies that $\phi(\alpha_{a,0}^{r_a})$ starts at vertex~$i$. Thus, it is clear that $\phi(\alpha_{a,0}^{r_a}) = \alpha_{i,j}^{r_i}$ for some $j$ such that $i+j\equiv a \textnormal{ mod }n$. Hence, the claim holds.
\end{proof}

\begin{lemma}  \label{lem:Bmi-dim}
We have that $\dim_\kk \left(B_{n,\ell}(m_0, \dots, m_{n-1}) \right) = \sum_{i=0}^{n-1} \sum_{j=0}^{\ell -1} m_i m_{i+j}.$ 
\end{lemma}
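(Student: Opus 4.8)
The plan is to show that the family $\{X_{i,j}^{r_i,s_{i+j}}\}$ from Notation~\ref{not:X} is in fact a $\kk$-basis of $B_{n,\ell}(m_0,\dots,m_{n-1})$, and then simply to count its elements. Two of the three ingredients are already in place: the first lemma of this subsection shows each $X_{i,j}^{r_i,s_{i+j}}$ lies in the algebra, and Lemma~\ref{lem:Bmi-linind} shows these maps are linearly independent. So the only point to establish before counting is that they span.

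For spanning, I would invoke the standard decomposition of the endomorphism algebra of a direct sum into a matrix of $\textnormal{Hom}$-spaces between the indecomposable summands:
\[
B_{n,\ell}(m_0,\dots,m_{n-1}) \;=\; \End_{B_{n,\ell}}\Big(\textstyle\bigoplus_{i,r_i} P_i^{r_i}\Big) \;=\; \bigoplus_{(i,r_i)}\ \bigoplus_{(a,s_a)} \textnormal{Hom}_{B_{n,\ell}}\big(P_a^{s_a},\,P_i^{r_i}\big).
\]
By Lemma~\ref{lem:Bmi-maps}, every nonzero map $P_a^{s_a}\to P_i^{r_i}$ has the form $\alpha_{a,k}^{s_a}\mapsto \alpha_{i,j}^{r_i}\cdot\alpha_{a,k}$ for some $j$ with $i+j\equiv a \pmod n$; since $\phi(\alpha_{a,0}^{s_a})=\alpha_{i,j}^{r_i}$ is a nonzero element of $P_i^{r_i}$, automatically $0\le j\le \ell-1$. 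Because $a\equiv i+j$, one computes $\alpha_{i,j}^{r_i}\cdot\alpha_{a,k}=\alpha_{i,j+k}^{r_i}$, so this map is exactly $X_{i,j}^{r_i,s_{i+j}}$ with source-copy index $s_{i+j}=s_a$. Hence each $\textnormal{Hom}$-summand is spanned by the relevant $X$'s, and therefore so is all of $B_{n,\ell}(m_0,\dots,m_{n-1})$. Combined with the two earlier lemmas, $\{X_{i,j}^{r_i,s_{i+j}}\}$ is a basis.

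It then remains to count the basis. The indices run over $0\le i\le n-1$, $\,0\le j\le \ell-1$, $\,0\le r_i\le m_i-1$, and $\,0\le s_{i+j}\le m_{i+j}-1$, so for each fixed pair $(i,j)$ there are $m_i$ choices of $r_i$ and $m_{i+j}$ choices of $s_{i+j}$. By Lemma~\ref{lem:Bmi-linind} distinct index tuples yield linearly independent, hence pairwise distinct, maps, so no double-counting occurs and
\[
\dim_\kk B_{n,\ell}(m_0,\dots,m_{n-1}) \;=\; \sum_{i=0}^{n-1}\ \sum_{j=0}^{\ell-1} m_i\,m_{i+j},
\]
as claimed.

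The step requiring the most care—and the main conceptual obstacle—is the spanning argument, specifically matching the description in Lemma~\ref{lem:Bmi-maps}, where the classifying index $j$ is characterized only through $i+j\equiv a \pmod n$, with the declared range $0\le j\le \ell-1$ in Notation~\ref{not:X}, and then identifying the resulting map precisely with $X_{i,j}^{r_i,s_{i+j}}$ (including reconciling the source-copy label $s_a$ of the lemma with the label $s_{i+j}$ of the $X$-notation). Once this identification is secured, linear independence turns the dimension count into the purely combinatorial cardinality of the index set, and everything else is bookkeeping.
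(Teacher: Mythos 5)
Your proof is correct, and it runs on the same two engines as the paper's own proof: the decomposition of the endomorphism algebra into $\textnormal{Hom}$-spaces between the summands $P_a^{s_a}$, and Lemma~\ref{lem:Bmi-maps} to identify what each $\textnormal{Hom}$-space contains. The difference is in the logical ordering. The paper's proof of Lemma~\ref{lem:Bmi-dim} never invokes the maps $X_{i,j}^{r_i,s_{i+j}}$ or their linear independence: it reads off $\dim_\kk \textnormal{Hom}_{B_{n,\ell}}(P_a^{s_a},P_i^{r_i})$ as the number of admissible $j$'s supplied by Lemma~\ref{lem:Bmi-maps} and sums over all pairs of summands; the basis statement is only deduced afterwards, in Proposition~\ref{prop:Bmi-basis}, by the cardinality argument (a linearly independent set whose size equals the dimension). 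You invert this: you prove spanning directly from Lemma~\ref{lem:Bmi-maps}, combine it with Lemma~\ref{lem:Bmi-linind} to get a basis, and then count the basis. Your route proves Proposition~\ref{prop:Bmi-basis} as a byproduct and renders its counting argument unnecessary, at the cost of making Lemma~\ref{lem:Bmi-dim} depend on Lemma~\ref{lem:Bmi-linind} --- which you correctly recognize you need, both for the basis property and to rule out double counting (relevant when $\ell > n$, where several $j$'s can satisfy $i+j \equiv a \pmod n$ for the same pair of summands); the paper's ordering keeps the dimension count independent of the linear-independence lemma. Both are sound; in particular, your reading of Lemma~\ref{lem:Bmi-maps} as a spanning statement for each $\textnormal{Hom}$-space is exactly how the paper itself uses it, since its literal phrasing (every nonzero map \emph{is} a single $X$) is shorthand for that.
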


\begin{proof}
 Recall that 
{\small
\begin{align*}
B_{n,\ell}(m_0,\ldots, m_{n-1}) & = \textnormal{Hom}_{B_{n,\ell}}\left(\bigoplus\limits_{a=0}^{n-1}\; \bigoplus\limits_{r_a=0}^{m_a-1} P_a^{r_a}, \; \;  \bigoplus\limits_{i=0}^{n-1} \; \bigoplus\limits_{r_i=0}^{m_i-1} P_i^{r_i} \right) \\
& \cong \bigoplus\limits_{a,i=0}^{n-1} \; \bigoplus\limits_{r_a=0}^{m_a-1} \; \bigoplus\limits_{r_i=0}^{m_i-1} \textnormal{Hom}_{B_{n,\ell}}(P_a^{r_a},P_i^{r_i}).
\end{align*} 
}

\noindent By Lemma~\ref{lem:Bmi-maps}, dim$_\kk(\textnormal{Hom}_{B_{n,\ell}}(P_a^{r_a},P_i^{r_i})) = \#\{ 1\leq j\leq \ell-1 ~:~ i+j\equiv a \textnormal{ mod }n  \}$. 
Therefore, 
{\small
\begin{align*}
    \textnormal{dim}_\kk(B_{n,\ell}(m_0,\ldots, m_{n-1})) & = \sum\limits_{a=0}^{n-1} \; \sum\limits_{i=0}^{n-1} \; \sum\limits_{r_a=0}^{m_a-1}\; \sum\limits_{r_i=0}^{m_i-1}  \#\{ 1\leq j\leq \ell-1 ~:~ i+j\equiv a \textnormal{ mod }n  \} \\
    & = \sum\limits_{a=0}^{n-1}\; \sum\limits_{i=0}^{n-1} m_a m_i \; \#\{ 1\leq j\leq \ell-1 ~:~ i+j\equiv a \textnormal{ mod }n  \} \\
    & = \sum\limits_{i=0}^{n-1} m_i \; \sum\limits_{a=0}^{n-1} m_a \; \# \{ 1\leq j\leq \ell-1 ~:~ i+j\equiv a \textnormal{ mod }n  \} \\
    & = \sum_{i=0}^{n-1} m_i \; \sum_{j=0}^{\ell -1} m_{i+j}.
\end{align*}
}
%where the last equality holds because $\sum\limits_{a=0}^{n-1} m_a |\{ 1\leq j\leq l-1 | b+j\equiv a \textnormal{ mod }n  \} | =  \sum_{j=0}^{\ell -1} m_{b+j}$.

\vspace{-.2in} 

\end{proof}

\begin{proposition} \label{prop:Bmi-basis}
%Consider the morphism $X_{i,j}^{r_i, s_{i+j}}$ as an element of $B_{n,\ell}(m_0, \dots, m_{n-1})$ via $$X_{i,j}^{r_i,s_{i+j}}(P_b^{s_b}) = 0, \quad \text{for \;$b \neq i+j, \; \;s_b \neq s_{i+j}$.}$$ Then, 
The collection of morphisms $\{X_{i,j}^{r_i, s_{i+j}}\}_{i,j,r_i,s_{i+j}}$ form a $\kk$-basis of   \linebreak $B_{n,\ell}(m_0, \dots, m_{n-1})$. 
\end{proposition}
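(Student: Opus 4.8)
The plan is to establish the basis claim by combining the three preceding lemmas, which together supply exactly the three ingredients needed: membership, linear independence, and a dimension count that forces spanning. First I would note that by the first lemma each $X_{i,j}^{r_i,s_{i+j}}$ genuinely lies in $B_{n,\ell}(m_0,\dots,m_{n-1})$, so the collection $\{X_{i,j}^{r_i,s_{i+j}}\}$ is a well-defined subset of the algebra. By Lemma~\ref{lem:Bmi-linind} these elements are linearly independent over $\kk$.

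Next I would count them. The indexing conventions from Notation~\ref{not:X} run over $0 \leq i \leq n-1$, $0 \leq j \leq \ell-1$, $0 \leq r_i \leq m_i-1$, and $0 \leq s_{i+j} \leq m_{i+j}-1$, so the number of such morphisms is $\sum_{i=0}^{n-1}\sum_{j=0}^{\ell-1} m_i\, m_{i+j}$. The mild subtlety to address is that the defining maps $X_{i,j}^{r_i,s_{i+j}}$ with $j=0$ are the ``identity-type'' maps $P_{i}^{s_i} \to P_i^{r_i}$, corresponding to $i+j \equiv a \bmod n$ with $j=0$; I would make sure the index range $0 \leq j \leq \ell-1$ used in the count matches the range $1 \leq j \leq \ell-1$ appearing in Lemma~\ref{lem:Bmi-maps} and Lemma~\ref{lem:Bmi-dim} by accounting for the $j=0$ (diagonal) contributions consistently. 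Once the bookkeeping is reconciled, the cardinality of the proposed basis equals $\sum_{i=0}^{n-1}\sum_{j=0}^{\ell-1} m_i\, m_{i+j}$, which by Lemma~\ref{lem:Bmi-dim} is exactly $\dim_\kk\!\big(B_{n,\ell}(m_0,\dots,m_{n-1})\big)$.

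Finally I would conclude: a linearly independent set whose cardinality equals the dimension of the finite-dimensional vector space $B_{n,\ell}(m_0,\dots,m_{n-1})$ is automatically a basis. This gives the result without having to verify spanning directly.

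I expect the only real obstacle to be the reconciliation of index ranges flagged above — namely, making certain that the count of $X_{i,j}^{r_i,s_{i+j}}$ matches the dimension formula of Lemma~\ref{lem:Bmi-dim} once the $j=0$ terms are handled, and confirming that these elements are indeed distinct (and not accidentally coincident for different index tuples). Everything else is a clean ``independent set of the right size is a basis'' argument, so no lengthy computation is needed.
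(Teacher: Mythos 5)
Your proposal is correct and matches the paper's own proof, which likewise combines the linear independence of Lemma~\ref{lem:Bmi-linind} with the dimension count of Lemma~\ref{lem:Bmi-dim} to conclude that an independent set of full cardinality is a basis. The index-range discrepancy you flag ($1\leq j\leq \ell-1$ versus $0\leq j\leq \ell-1$ in Lemma~\ref{lem:Bmi-dim}) is just a typo in the paper — the $j=0$ terms are indeed included, as the final formula $\sum_{i}\sum_{j=0}^{\ell-1}m_i m_{i+j}$ shows — so your reconciliation resolves it as expected.
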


\begin{proof}
The number of elements in the linearly independent set $\{X_{i,j}^{r_i, s_{i+j}}\}_{i,j,r_i,s_{i+j}}$ from Lemma~\ref{lem:Bmi-linind} is $\sum_{i=0}^{n-1} \sum_{j=0}^{\ell -1} m_i m_{i+j}$, the same as dim$_\kk (B_{n,\ell}(m_0,\ldots,m_{n-1}))$ by Lemma~\ref{lem:Bmi-dim}. So, the result follows.
\end{proof}

%%%%%%%%%%%%%%%%%%%%%%%%%%%%%%

 \subsection{Algebra structure} \label{sec:algebra}

Next, we provide an explicit algebraic structure of the NSY algebras $A:=B_{n,\ell}(m_0, \dots, m_{n-1})$ in terms of the basis of $A$ from Proposition~\ref{prop:Bmi-basis}. In particular, we know that such $A$ are associative and unital algebras, but we seek the explicit product and unit formulas in terms of the basis $\{X_{i,j}^{r_i, s_{i+j}}\}_{i,j,r_i,s_{i+j}}$ in order to derive the desired coproduct formula for $A$ later (towards proving Theorem~\ref{thm:main-intro}).

\begin{proposition} \label{prop:Bmi-alg}
Take $A:=B_{n,\ell}(m_0, \dots, m_{n-1})$, an NSY algebra. Then, with the elements $\{X_{i,j}^{r_i, s_{i+j}}\}_{i,j,r_i,s_{i+j}}$ from Notation~\ref{not:X}, the formulas 
\[
\begin{array}{c}
\medskip
X_{i,j}^{r_i, s_{i+j}} \cdot  X_{a,b}^{r_a, s_{a+b}}  = \begin{cases}
\delta_{a,i+j} \; \delta_{r_a, s_{i+j}}\;  X_{i,j+b}^{r_i, s_{a+b}}, &\text{for $j +b < \ell$},\\ 
0, &\text{else};
\end{cases}
\medskip \\
1_A = \sum_ {i=0}^{n-1} \sum_{r_i=0}^{m_{i}-1} X_{i,0}^{r_i,r_i},
\end{array}
\]
give $A$  an explicit structure of an associative, unital algebra.
\end{proposition}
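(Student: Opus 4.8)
The plan is to exploit the fact that, by its very definition in \eqref{eq:B(m_i)}, the algebra $A = \End_{B_{n,\ell}}(M)$ with $M = \bigoplus_{i=0}^{n-1}\bigoplus_{r_i=0}^{m_i-1} P_i^{r_i}$ is \emph{already} an associative, unital $\kk$-algebra under composition of right $B_{n,\ell}$-module endomorphisms, with associativity inherited from that of composition and with unit the identity map $\id_M$. Thus the proposition is not asking me to verify the algebra axioms from scratch; rather, it asks me to confirm that the two displayed formulas correctly \emph{describe} this composition and this unit in terms of the basis $\{X_{i,j}^{r_i,s_{i+j}}\}$ established in Proposition~\ref{prop:Bmi-basis}. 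Accordingly, I fix the convention that the product $X \cdot Y$ in $A$ is the composite $X \circ Y$ (apply $Y$, then $X$), and I reduce everything to evaluating maps on the basis vectors $\alpha_{c,k}^{s_c}$ of $M$.

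For the multiplication formula I would evaluate the composite $X_{i,j}^{r_i,s_{i+j}} \circ X_{a,b}^{r_a,s_{a+b}}$ on a generic basis vector $\alpha_{c,k}^{s_c}$. By \eqref{eq:Xij} and the extension-by-zero convention of Notation~\ref{not:X}, the map $X_{a,b}^{r_a,s_{a+b}}$ annihilates this vector unless $c = a+b$ and $s_c = s_{a+b}$, in which case it returns $\alpha_{a,b+k}^{r_a}$. Applying $X_{i,j}^{r_i,s_{i+j}}$ to the result is then nonzero only when the output summand $P_a^{r_a}$ coincides with the source summand $P_{i+j}^{s_{i+j}}$, i.e. only when $a = i+j$ and $r_a = s_{i+j}$, in which case it produces $\alpha_{i,\,(j+b)+k}^{r_i}$. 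Collecting these constraints gives exactly the Kronecker deltas $\delta_{a,i+j}\,\delta_{r_a,s_{i+j}}$, and comparing the surviving assignment $\alpha_{a+b,k}^{s_{a+b}} \mapsto \alpha_{i,\,(j+b)+k}^{r_i}$ with \eqref{eq:Xij} identifies it as $X_{i,j+b}^{r_i,s_{a+b}}$, using $i+(j+b) = a+b$.

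The one point requiring care — and the only place I expect real friction — is the truncation imposed by the admissible ideal $\mathcal{I}_\ell = R^\ell$: a path of length $\geq \ell$ vanishes in $B_{n,\ell}$. When $j+b \geq \ell$, the precomposing path $\alpha_{i,j+b}$ has length $j+b \geq \ell$ and so is zero, whence $X_{i,j+b}^{r_i,s_{a+b}}$ is the zero map and the product is $0$; this is precisely the ``else'' branch. When $j+b < \ell$ the symbol $X_{i,j+b}^{r_i,s_{a+b}}$ is a genuine basis element, and the residual length restriction on $k$ (namely $(j+b)+k < \ell$) is already baked into \eqref{eq:Xij}, so no separate bookkeeping in $k$ is needed. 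Throughout, I keep the vertex indices $i,a$ modulo $n$ so the cyclic identifications stay consistent.

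Finally, for the unit I would read off from \eqref{eq:Xij} that $X_{i,0}^{r_i,r_i}$ sends $\alpha_{i,k}^{r_i}\mapsto \alpha_{i,k}^{r_i}$ and kills every other summand, so it is the identity on $P_i^{r_i}$ and zero elsewhere; summing over all $i$ and $r_i$ recovers $\id_M$. As a consistency check I would run the multiplication formula with one factor equal to $1_A$ and confirm $1_A \cdot X = X = X \cdot 1_A$ directly from the deltas, which closes the verification.
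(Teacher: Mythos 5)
Your proposal is correct, and it diverges from the paper's proof in a meaningful way. The paper's proof actually starts the same way you do: it identifies the product $X_{i,j}^{r_i,s_{i+j}} \cdot X_{a,b}^{r_a,s_{a+b}}$ with the composite of module maps, evaluates on basis vectors $\alpha_{a+b,k}^{s_{a+b}}$, and extracts exactly the deltas $\delta_{a,i+j}\,\delta_{r_a,s_{i+j}}$ and the identification with $X_{i,j+b}^{r_i,s_{a+b}}$ that you describe (your handling of the truncation $j+b \geq \ell$ via the vanishing of length-$\geq \ell$ paths, and of the residual constraint on $k$, matches the paper's implicit bookkeeping). Where you part ways is afterwards: the paper then verifies associativity by a full delta-calculus computation on triple products $\bigl(X_{i,j}^{r_i,s_{i+j}} \cdot X_{a,b}^{r_a,s_{a+b}}\bigr) \cdot X_{c,d}^{r_c,s_{c+d}}$ and checks unitality by expanding $1_A \cdot X$ and $X \cdot 1_A$ against the formula, whereas you observe that once the formulas are identified with composition in $\End_{B_{n,\ell}}(M)$ and with $\id_M$, associativity and unitality are inherited for free. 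Your route is shorter and logically economical --- the paper's associativity computation is, strictly speaking, redundant given its own first step. What the paper's longer version buys is a self-contained consistency check of the delta formula itself (the same delta manipulations recur in the proof of Theorem~\ref{thm:main}), and a proof that the displayed formulas define an associative unital algebra \emph{as stated}, without the reader needing to trust the identification with composition. Both are complete proofs; yours is the one a referee would likely call cleaner.
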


\begin{proof}
First, we study the multiplication of $A$ in terms of the basis $\{X_{i,j}^{r_i, s_{i+j}}\}_{i,j,r_i,s_{i+j}}$. Given the two basis elements $X_{i,j}^{r_i, s_{i+j}}$ and $X_{a,b}^{r_a, s_{a+b}}$ as above, their composition is defined to be:
%\medskip 
$$X_{i,j}^{r_i, s_{i+j}} \circ X_{a,b}^{r_a, s_{a+b}} = \begin{cases} 
      P_{a+b}^{s_{a+b}} \longrightarrow P_a^{r_a} = P_{i+j}^{s_{i+j}} \longrightarrow P_i^{r_i}, & a = i+j ,\; r_a = s_{i+j} , \; j+b < \ell \\
      0, & \text{otherwise}. 
   \end{cases} $$
If we consider a basis element $\alpha_{a+b, k}^{s_{a+b}}$ of $P_{a+b}^{s_{a+b}}$, then when $a = i+j , \; r_a = s_{i+j} ,\;  j+b < \ell$, this map is defined by:
\[ 
{\small
\begin{array}{rll}
\medskip
X_{i,j}^{r_i, s_{i+j}} \circ X_{a,b}^{r_a, s_{a+b}} \left(\alpha_{a+b, k}^{s_{a+b}} \right) &\; = \;  X_{i,j}^{r_i, s_{i+j}} \left(\left(\alpha_{a,b} \cdot \alpha_{a+b, k} \right)^{r_a} \right) 
  &\; = \; \left( \alpha_{i,j} \cdot \alpha_{a,b} \cdot \alpha_{a+b,k} \right)^{r_i} \\ \medskip
   &\; = \; \left( \alpha_{i,j} \cdot \alpha_{i+j,b} \cdot \alpha_{a+b,k} \right)^{r_i} 
   &\; = \; \left( \alpha_{i,j+b} \cdot \alpha_{a+b,k} \right)^{r_i} \\
   &\; = \; X_{i,j+b}^{r_i, s_{a+b}} \left( \alpha_{a+b,k}^{s_{a+b}} \right).
\end{array} 
}
\]
Here, we use~\eqref{eq:P-act} throughout. Hence, we have the product formula for $A$ as claimed. 

\smallskip

Though not necessary, we show next that the product formula yields an associative multiplication:  
{\small
\begin{align*}
\medskip
&\left(X_{i,j}^{r_i, s_{i+j}} \cdot  X_{a,b}^{r_a, s_{a+b}} \right) \cdot X_{c,d}^{r_c,s_{c+d}} \\
&\qquad = 
\begin{cases}
\delta_{a,i+j} \; \delta_{r_a, s_{i+j}}\;  X_{i,j+b}^{r_i, s_{a+b}} \cdot X_{c,d}^{r_c,s_{c+d}}, &\text{for $j +b < \ell$}\\
0, &\text{else}
\end{cases} \bigskip \\ 
&\qquad = 
\begin{cases}
\delta_{a,i+j} \; \delta_{r_a, s_{i+j}}\; \delta_{c,i+j+b} \; \delta_{r_c,s_{i+j+b}}\; X_{i,j+b+d}^{r_i, s_{c+d}}, &\text{for $j+b+d < \ell$},\\
0, & \text{else}
\end{cases} \bigskip \\
&\qquad = 
\begin{cases}
\delta_{c,a+b} \; \delta_{r_c,s_{a+b}} \; \delta_{a,i+j} \; \delta_{r_a, s_{i+j}}\; X_{i,j+b+d}^{r_i, s_{c+d}}, &\text{for $j+b+d < \ell$},\\
0, & \text{else}
\end{cases} \bigskip \\
&\qquad = 
\begin{cases}
\delta_{c,a+b} \; \delta_{r_c,s_{a+b}}\;  X_{i,j}^{r_i, s_{i+j}} \cdot X_{a,b+d}^{r_a, s_{c+d}}, &\text{for $b+d < \ell$},\\
0, & \text{else}
\end{cases} \bigskip\\
&\qquad = X_{i,j}^{r_i, s_{i+j}} \cdot \left( X_{a,b}^{r_a, s_{a+b}} \cdot X_{c,d}^{r_c,s_{c+d}} \right).
\end{align*}
}

We now verify the unitality axiom, with  $1_A$ given in terms of the basis $\{X_{i,j}^{r_i, s_{i+j}}\}_{i,j,r_i,s_{i+j}}$ as claimed. Consider the  computations below:
\[ 
{\small
\begin{array}{rll}
\medskip
1_A \cdot X_{a,b}^{r_a, s_{a+b}} &= \left( \sum_ {i=0}^{n-1} \sum_{r_i=0}^{m_{i}-1} X_{i,0}^{r_i,r_i} \right) \cdot  X_{a,b}^{r_a, s_{a+b}} & = \sum_ {i=0}^{n-1} \sum_{r_i=0}^{m_{i}-1} \left(  X_{i,0}^{r_i,r_i} \cdot  X_{a,b}^{r_a, s_{a+b}} \right) \\ \medskip
& = \sum_ {i=0}^{n-1} \sum_{r_i=0}^{m_{i}-1} \left(\delta_{a,i} \; \delta_{r_a,r_i} \; X_{i,b}^{r_i, s_{a+b}} \right) & =  \sum_ {i=0}^{n-1} \left(\delta_{a,i} \; X_{i,b}^{r_a, s_{a+b}} \right) \\
\bigskip
& =    X_{a,b}^{r_a, s_{a+b}}, \\ \medskip
X_{a,b}^{r_a, s_{a+b}} \cdot 1_A &\; = \;   X_{a,b}^{r_a, s_{a+b}} \cdot  \left( \sum_ {i=0}^{n-1} \sum_{r_i=0}^{m_{i}-1} X_{i,0}^{r_i,r_i} \right) 
&= \sum_ {i=0}^{n-1} \sum_{r_i=0}^{m_{i}-1} \left(X_{a,b}^{r_a, s_{a+b}} \cdot X_{i,0}^{r_i,r_i}
\right) \\ \medskip
&= \sum_ {i=0}^{n-1} \sum_{r_i=0}^{m_{i}-1} \delta_{i,a+b}\; \delta_{r_i, s_{a+b}} X_ {a,b}^{r_a,r_i} & =   \sum_{r_i=0}^{m_{i}-1} \delta_{r_i, s_{a+b}} X_ {a,b}^{r_a,r_i} \\
& =   X_ {a,b}^{r_a,s_a+b}.
\end{array}
}
\]
Therefore, the result holds.
\end{proof}

%%%%%%%%%%%%%%%%%%%%%%%%%%%%%%

 \subsection{Non-counital Frobenius structure} \label{sec:Frobenius}
Here, we establish a non-counital Frobenius structure for the NSY algebras, building on the algebra structure presented in Proposition~\ref{prop:Bmi-alg}. Consider the next preliminary result.

\begin{lemma} \label{lem:coassoc} 
%\cite[Exercise~6.5]{Bohm} 
Let $A$ be an algebra with $1_A$, and define a $\kk$-linear map
$\Delta: A \to A \otimes A$ by $\Delta(1_A):= \textstyle \sum_i a_i \otimes b_i$ and $\Delta(x) :=  \sum_i a_i \otimes b_i x.$
If 
\begin{equation}
\label{eq:Delta1}
\textstyle \sum_i a_i \otimes b_i x = \sum_i x a_i \otimes b_i, \quad \quad \text{ for all } x \in A,
\end{equation} 
then $(A,m,u,\Delta)$ is a non-counital Frobenius algebra.
\end{lemma}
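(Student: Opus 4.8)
The plan is to verify directly the two defining conditions of a non-counital Frobenius structure listed in \Cref{defthm:Frob}: coassociativity of $\Delta$ and the bimodule condition \eqref{eq:Delta-m}. Counitality is deliberately not part of the conclusion, so no counit $\varepsilon$ need be produced. Throughout I would write $\omega := \Delta(1_A) = \sum_i a_i \otimes b_i \in A \otimes A$ and record the reformulation $\Delta(x) = \omega\,(1_A \otimes x)$, using the componentwise product on $A\otimes A$; this shows at once that $\Delta$ is a well-defined $\kk$-linear map, independent of the chosen expression for $\omega$. In this language the hypothesis \eqref{eq:Delta1} reads $\omega(1_A\otimes x) = (x\otimes 1_A)\omega$ for all $x$, so that $\Delta$ also admits the form $\Delta(x) = \sum_i x a_i \otimes b_i$. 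The entire argument rests on the freedom to pass between these two expressions for $\Delta$.

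First I would dispatch the bimodule condition \eqref{eq:Delta-m}, testing both equalities on a simple tensor $x\otimes y$. The left equality $(\id\otimes m)(\Delta\otimes\id) = \Delta m$ is immediate from the definition of $\Delta$, since both sides send $x\otimes y$ to $\sum_i a_i\otimes b_i xy$, with no appeal to \eqref{eq:Delta1}. The right equality $\Delta m = (m\otimes\id)(\id\otimes\Delta)$ is where the hypothesis enters: the right-hand side sends $x\otimes y$ to $\sum_i x a_i\otimes b_i y$, and right-multiplying the second factor of \eqref{eq:Delta1} by $y$ rewrites this as $\sum_i a_i\otimes b_i xy = \Delta(xy)$.

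The substantive step is coassociativity, $(\Delta\otimes\id)\Delta = (\id\otimes\Delta)\Delta$. Expanding $\Delta$ in the second slot gives directly
\[
(\id\otimes\Delta)\Delta(x) = \sum_{i,j} a_i \otimes a_j \otimes b_j b_i x .
\]
For the other side, expanding $\Delta$ in the first slot gives $(\Delta\otimes\id)\Delta(x) = \sum_{i,j} a_j \otimes b_j a_i \otimes b_i x$, which after relabelling $i\leftrightarrow j$ becomes $\sum_{i,j} a_i \otimes b_i a_j \otimes b_j x$. I would then apply \eqref{eq:Delta1} to the last two tensor factors with the role of $x$ played by $b_i$ (so $\sum_j b_i a_j \otimes b_j = \sum_j a_j \otimes b_j b_i$) and right-multiply by $x$, collapsing this to the same normal form $\sum_{i,j} a_i \otimes a_j \otimes b_j b_i x$. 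The two iterated comultiplications therefore agree.

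The main obstacle is entirely bookkeeping: in the coassociativity computation the hypothesis \eqref{eq:Delta1} must be applied to the correct pair of tensor slots and with the correct element inserted, and one must keep straight which summation index belongs to which copy of $\omega$. Once the right relabelling is chosen, both sides reduce to $\sum_{i,j} a_i \otimes a_j \otimes b_j b_i x$ and the lemma follows; crucially, no counit is constructed, consistent with the non-counital conclusion.
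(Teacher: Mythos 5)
Your proposal is correct and follows essentially the same route as the paper: a direct verification of coassociativity and of \eqref{eq:Delta-m} on simple tensors, using the Casimir-type identity \eqref{eq:Delta1} to shuttle elements between tensor slots. The only cosmetic difference is in the coassociativity step, where you apply \eqref{eq:Delta1} once (with $b_i$ in the role of $x$, then right-multiply by $x$) after relabelling, while the paper applies it twice (with $x=y$ and then $x=b_j y$); both reduce the two iterated coproducts to the same normal form.
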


\begin{proof}
It suffices to show that $\Delta$ is coassociative and satisfies \eqref{eq:Delta-m}.  Now $\Delta$ is coassociative as, for all $y \in A$, we get that
\begin{align*}
\textstyle (\Delta \otimes \id)\Delta(y) 
& \; \; =  \; \; \textstyle \sum_i \Delta(a_i) \otimes b_i y 
 \; \; =  \; \; \textstyle \sum_{i,j} a_j \otimes b_j a_i \otimes b_i y  \; \; \overset{\textnormal{\eqref{eq:Delta1}, $x \hspace{-.03in}=\hspace{-.03in}y$}}{=}  \; \; \sum_{i,j} a_j \otimes b_j y a_i \otimes b_i\\
&\overset{\textnormal{\eqref{eq:Delta1}, $x\hspace{-.03in}=\hspace{-.03in}b_j y$}}{=}  \; \; \textstyle \sum_{j,i} a_j \otimes a_i \otimes b_i b_j y  \; \; =  \; \; \sum_j a_j \otimes \Delta(b_j y)  \; \; =  \; \; (\id \otimes \Delta)\Delta(y). 
\end{align*}
Moreover, \eqref{eq:Delta-m} holds as for all $y,z \in A$, we get that
\begin{align*}
(\id \otimes m)(\Delta \otimes \id)(y \otimes z) = (\id \otimes m)(\textstyle \sum_i a_i \otimes b_i y \otimes z) =  \textstyle \sum_i a_i \otimes b_i y z = \Delta m (y \otimes z)\\
(m \otimes \id)(\id \otimes \Delta)(y \otimes z)  =  \textstyle \sum_i y  a_i \otimes b_i z 
\overset{\textnormal{\eqref{eq:Delta1},  $x\hspace{-.03in}=\hspace{-.03in}y$}}{=}  \textstyle \sum_i a_i \otimes b_i y z = \Delta m (y \otimes z).
\end{align*} 
Thus, $(A,m,u,\Delta)$ is a non-counital Frobenius algebra.
\end{proof}

Here, $\Delta(1_A)$ is  a {\it Casimir element} of $A$.
This brings us to our first main result.

\begin{theorem} \label{thm:main}
Consider the NSY algebra $A:=B_{n,\ell}(m_0, \dots, m_{n-1})$, with $\kk$-basis $\{X_{i,j}^{r_i, s_{i+j}}\}$ given in Proposition~\ref{prop:Bmi-basis} and algebra structure given in Proposition~\ref{prop:Bmi-alg}. Then, the following statements hold. 
\begin{enumerate} [font=\normalfont]
\item $A$ is non-counital Frobenius with
\begin{align*}
\Delta(X_{i,j}^{r_i, s_{i+j}}) 
&= \displaystyle \sum_{k=0}^{\ell - 1 - j} \; \sum_{t_{i+j+k}=0}^{m_{i+j+k}-1} \; \sum_{t_{i+j+k - \ell +1}=0}^{m_{i+j+k-\ell +1}-1}\Big( 1 - \delta_{m_{i+j+k},m_{i+j+k-\ell+1}}(1 - \delta_{t_{i+j+k}, t_{i+j+k-\ell+1}})\Big)\\ \noalign{\vskip-8pt}
 &  \displaystyle \hspace{2in} \cdot \; X_{i,j+k}^{r_i, t_{i+j+k}}\; \otimes\; X_{i+j+k-\ell+1, \ell - 1-k}^{t_{i+j+k-\ell+1}, s_{i+j}}. 
\end{align*}
\item $(A, \Delta)$ is Frobenius if and only if $m_i = m_{i - \ell +1}$ for all $i=0, \dots n-1$; in which case,  $$\varepsilon(X_{i,j}^{r_i, s_{i+j}}) = \delta_{j, \ell-1} \; \delta_{r_i, s_{i+j}} \; 1_\kk$$ is the counit of $\Delta$.
 \end{enumerate}
\end{theorem}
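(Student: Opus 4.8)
The plan is to obtain part~(1) as a direct application of Lemma~\ref{lem:coassoc}, so that the whole statement reduces to exhibiting a Casimir element and checking the single compatibility~\eqref{eq:Delta1}. First I would read off the Casimir element $c := \Delta(1_A)$ by specializing the displayed formula to the idempotent summands $X_{i,0}^{r_i,r_i}$ of $1_A = \sum_{i,r_i} X_{i,0}^{r_i,r_i}$ (Proposition~\ref{prop:Bmi-alg}) and summing over $i$ and $r_i$; this gives $c = \sum a_t \otimes b_t$ with $a_t = X_{i,k}^{r_i,t_{i+k}}$ and $b_t = X_{i+k-\ell+1,\,\ell-1-k}^{t_{i+k-\ell+1},\,r_i}$, weighted by the correction factor. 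The next step is to confirm that the theorem's formula for $\Delta(X_{i,j}^{r_i,s_{i+j}})$ is precisely $\sum a_t \otimes b_t \cdot X_{i,j}^{r_i,s_{i+j}}$, i.e.\ that $\Delta$ is right multiplication of $c$ in the sense of Lemma~\ref{lem:coassoc}. Using the multiplication rule of Proposition~\ref{prop:Bmi-alg}, the product $b_t \cdot X_{i,j}^{r_i,s_{i+j}}$ is nonzero only when the target vertex and copy of $b_t$ match the source of $X_{i,j}^{r_i,s_{i+j}}$, which forces the summation index and, after the reindexing $k \mapsto j+k$, reproduces exactly the range $0 \le k \le \ell-1-j$ and the second factor $X_{i+j+k-\ell+1,\,\ell-1-k}^{t_{\,\cdot},\,s_{i+j}}$ in the statement.

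With $\Delta$ presented this way, Lemma~\ref{lem:coassoc} applies as soon as~\eqref{eq:Delta1} is verified, namely $\sum_t a_t \otimes b_t x = \sum_t x a_t \otimes b_t$ for every basis element $x = X_{a,b}^{r_a,s_{a+b}}$. I would evaluate both sides using Proposition~\ref{prop:Bmi-alg}: on the left, right multiplication $b_t x$ selects the terms with source vertex and copy matching $x$; on the right, left multiplication $x a_t$ selects the terms whose first factor meets $x$. The claim is that, after the same shift in the summation index, the two selections produce the identical family of tensors, the correction factor being carried along unchanged on both sides. Granting this, $(A,m,u,\Delta)$ is non-counital Frobenius by Lemma~\ref{lem:coassoc}, which is part~(1). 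I expect the main obstacle to be exactly this bookkeeping: keeping the vertex labels and shifts $i+j+k-\ell+1$ straight modulo $n$, tracking the four copy indices $r_i, s_{i+j}, t_{i+j+k}, t_{i+j+k-\ell+1}$, and confirming that the summation ranges on the two sides of~\eqref{eq:Delta1} truly coincide after reindexing.

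For part~(2), I would treat the two implications separately. Assuming $m_a = m_{a-\ell+1}$ for all $a$, every relevant pair of multiplicities $m_{i+j+k}$ and $m_{i+j+k-\ell+1}$ coincides, so the correction factor reduces to $\delta_{t_{i+j+k},\,t_{i+j+k-\ell+1}}$ and the double sum over copies collapses to a single diagonal sum. I would then verify the counit axioms directly with $\varepsilon(X_{i,j}^{r_i,s_{i+j}}) = \delta_{j,\ell-1}\,\delta_{r_i,s_{i+j}}$: applying $\varepsilon$ to the first tensor factor forces $j+k=\ell-1$, hence $k=\ell-1-j$, and returns $X_{i,j}^{r_i,s_{i+j}}$; applying it to the second factor forces $\ell-1-k=\ell-1$, hence $k=0$, and again returns $X_{i,j}^{r_i,s_{i+j}}$. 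Thus $(\varepsilon\otimes\id)\Delta = \id = (\id\otimes\varepsilon)\Delta$. Conversely, if $\Delta$ is counital then $A$ is Frobenius by Definition-Theorem~\ref{defthm:Frob}, so Theorem~\ref{thm:SY} combined with the computation $\nu(i) = i+\ell-1 \bmod n$ from Proposition~\ref{prop:Nak-perm} forces $m_i = m_{\nu(i)}$, equivalently $m_i = m_{i-\ell+1}$ for all $i$. This settles both directions and identifies $\varepsilon$ as the counit, completing the argument.
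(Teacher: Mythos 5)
Your proposal follows the paper's own proof essentially step for step: both compute the Casimir element $\Delta(1_A)$ from the idempotent decomposition of $1_A$, verify condition \eqref{eq:Delta1} on the basis $\{X_{a,b}^{r_a,s_{a+b}}\}$ so that Lemma~\ref{lem:coassoc} applies (with the same computation confirming that the displayed formula is right multiplication by the Casimir element), and for part~(2) both chain Theorem~\ref{thm:SY} with Proposition~\ref{prop:Nak-perm} to get the multiplicity condition and then check counitality directly after the correction factor collapses to $\delta_{t_{i+j+k},\,t_{i+j+k-\ell+1}}$. The only difference is that you defer the index bookkeeping which the paper carries out in full, but the plan is sound and the approach is identical.
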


\begin{proof}
(a) First, note that by Proposition~\ref{prop:Bmi-alg}, 
 \[ 
{\small
\begin{array}{rl}
\Delta(1_A) &=  \displaystyle \sum_ {i=0}^{n-1} \; \sum_{r_i=0}^{m_{i}-1} \Delta(X_{i,0}^{r_i,r_i}) \medskip \\
&= \displaystyle  \sum_ {i=0}^{n-1} \; \sum_{r_i=0}^{m_{i}-1} \; \sum_{k=0}^{\ell - 1} \; \sum_{t_{i+k}=0}^{m_{i+k}-1} \; \sum_{t_{i+k - \ell +1}=0}^{m_{i+k-\ell +1}-1}\Big( 1 - \delta_{m_{i+k},m_{i+k-\ell+1}}(1 - \delta_{t_{i+k}, t_{i+k-\ell+1}})\Big)\\
 \noalign{\vskip-5pt} & \displaystyle \hspace{2.2in} \cdot \; X_{i,k}^{r_i, t_{i+k}}\; \otimes\; X_{i+k-\ell+1, \ell - 1-k}^{t_{i+k-\ell+1}, r_{i}}.
\end{array}
}
\]
It suffices to show  that  \eqref{eq:Delta1} holds for the basis  of $A$  to conclude that  $A$ is non-counital Frobenius [Lemma~\ref{lem:coassoc}]. To  proceed, take the basis element $X_{a,b}^{r_a, s_{a+b}}$ and consider the following computations:
{\small
\begin{align*}
  & (X_{a,b}^{r_a, s_{a+b}} \otimes 1_A)\Delta(1_A) \bigskip \\ & = \displaystyle  \sum_ {i=0}^{n-1} \; \sum_{r_i=0}^{m_{i}-1} \; \sum_{k=0}^{\ell - 1} \; \sum_{t_{i+k}=0}^{m_{i+k}-1} \; \sum_{t_{i+k - \ell +1}=0}^{m_{i+k-\ell +1}-1}\Big( 1 - \delta_{m_{i+k},m_{i+k-\ell+1}}(1 - \delta_{t_{i+k}, t_{i+k-\ell+1}})\Big)\\
\noalign{\vskip-8pt} &  \displaystyle \hspace{2.2in} \cdot \;X_{a,b}^{r_a, s_{a+b}} \; X_{i,k}^{r_i, t_{i+k}}\; \otimes\; X_{i+k-\ell+1, \ell - 1-k}^{t_{i+k-\ell+1}, r_{i}} \bigskip \\
 &  = \displaystyle  \sum_ {i=0}^{n-1} \; \sum_{r_i=0}^{m_{i}-1} \; \sum_{k=0}^{\ell - 1-b} \; \sum_{t_{i+k}=0}^{m_{i+k}-1} \; \sum_{t_{i+k - \ell +1}=0}^{m_{i+k-\ell +1}-1}\Big( 1 - \delta_{m_{i+k},m_{i+k-\ell+1}}(1 - \delta_{t_{i+k}, t_{i+k-\ell+1}})\Big)\\
\noalign{\vskip-8pt}  & \displaystyle \hspace{2.2in} \cdot \;\delta_{i,a+b} \; \delta_{r_i, s_{a+b}}\; X_{a,b+k}^{r_a, t_{i+k}} \; \otimes\; X_{i+k-\ell+1, \ell - 1-k}^{t_{i+k-\ell+1}, r_{i}} \bigskip \\
 &  = \displaystyle   \sum_{k=0}^{\ell - 1-b} \; \sum_{t_{a+b+k}=0}^{m_{a+b+k}-1} \; \sum_{t_{a+b+k - \ell +1}=0}^{m_{a+b+k-\ell +1}-1}\Big( 1 - \delta_{m_{a+b+k},m_{a+b+k-\ell+1}}(1 - \delta_{t_{a+b+k}, t_{a+b+k-\ell+1}})\Big)\\
\noalign{\vskip-8pt} &  \displaystyle \hspace{2in} \cdot  \;  X_{a,b+k}^{r_a, t_{a+b+k}} \; \otimes\; X_{a+b+k-\ell+1, \ell - 1-k}^{t_{a+b+k-\ell+1}, s_{a+b}}.
\end{align*}
}
and 
{\small
\begin{align*}
  &\Delta(1_A) (1_A \otimes X_{a,b}^{r_a, s_{a+b}}) \bigskip \\ & = \displaystyle  \sum_ {i=0}^{n-1} \; \sum_{r_i=0}^{m_{i}-1} \; \sum_{k=0}^{\ell - 1} \; \sum_{t_{i+k}=0}^{m_{i+k}-1} \; \sum_{t_{i+k - \ell +1}=0}^{m_{i+k-\ell +1}-1}\Big( 1 - \delta_{m_{i+k},m_{i+k-\ell+1}}(1 - \delta_{t_{i+k}, t_{i+k-\ell+1}})\Big)\\
 \noalign{\vskip-8pt} & \displaystyle \hspace{2.2in} \cdot \; X_{i,k}^{r_i, t_{i+k}}\; \otimes\; X_{i+k-\ell+1, \ell - 1-k}^{t_{i+k-\ell+1}, r_{i}} \; X_{a,b}^{r_a, s_{a+b}} \bigskip\\
  & = \displaystyle  \sum_ {i=0}^{n-1} \; \sum_{r_i=0}^{m_{i}-1} \; \sum_{k=b}^{\ell - 1} \; \sum_{t_{i+k}=0}^{m_{i+k}-1} \; \sum_{t_{i+k - \ell +1}=0}^{m_{i+k-\ell +1}-1}\Big( 1 - \delta_{m_{i+k},m_{i+k-\ell+1}}(1 - \delta_{t_{i+k}, t_{i+k-\ell+1}})\Big)\\
 \noalign{\vskip-8pt} & \displaystyle \hspace{2.2in} \cdot \; X_{i,k}^{r_i, t_{i+k}}\; \otimes\; \delta_{a,i}\; \delta_{r_a,r_i}\; X_{i+k-\ell+1, \ell - 1-k+b}^{t_{i+k-\ell+1}, s_{a+b}}\bigskip\\
 &  = \displaystyle  \sum_{k=b}^{\ell - 1} \; \sum_{t_{a+k}=0}^{m_{a+k}-1} \; \sum_{t_{a+k - \ell +1}=0}^{m_{a+k-\ell +1}-1}\Big( 1 - \delta_{m_{a+k},m_{a+k-\ell+1}}(1 - \delta_{t_{a+k}, t_{a+k-\ell+1}})\Big)\\
\noalign{\vskip-8pt}  & \displaystyle \hspace{1.8in} \cdot \; X_{a,k}^{r_a, t_{a+k}}\; \otimes\; X_{a+k-\ell+1, \ell - 1-k+b}^{t_{a+k-\ell+1}, s_{a+b}}\bigskip\\
 &  = \displaystyle  \sum_{k=0}^{\ell - 1-b} \; \sum_{t_{a+b+k}=0}^{m_{a+b+k}-1} \; \sum_{t_{a+b+k - \ell +1}=0}^{m_{a+b+k-\ell +1}-1}\Big( 1 - \delta_{m_{a+b+k},m_{a+b+k-\ell+1}}(1 - \delta_{t_{a+b+k}, t_{a+b+k-\ell+1}})\Big)\\
\noalign{\vskip-8pt} &  \displaystyle \hspace{1.8in} \cdot \; X_{a,b+k}^{r_a, t_{a+b+k}}\; \otimes\; X_{a+b+k-\ell+1, \ell - 1-k}^{t_{a+b+k-\ell+1}, s_{a+b}}.
\end{align*}
}
So, $  (X_{a,b}^{r_a, s_{a+b}} \otimes 1_A) \Delta(1_A) =  \Delta(1_A) (1_A \otimes X_{a,b}^{r_a, s_{a+b}})$ as desired.

\bigskip

(b) The first statement follows from the following equivalences:
{\small
\[
\text{$A$ is Frobenius } \overset{\textnormal{Thm.  \ref{thm:SY}}}{\Longleftrightarrow} m_i=m_{\nu(i)}\; \forall\; i\;  \overset{\textnormal{Prop.  \ref{prop:Nak-perm}}}{\iff} m_i=m_{i+\ell-1} \; \forall \; i\; \overset{i\rightarrow i-\ell+1}{\iff} m_i=m_{i-\ell+1} \; \forall \; i.  \]
}

\noindent When $A$ is Frobenius, that is, when $m_i = m_{i-\ell+1}$ for $i=0, \dots, n-1$, we get that 
{\small
\begin{align*}
\Delta(X_{i,j}^{r_i, s_{i+j}}) 
&= \displaystyle \sum_{k=0}^{\ell - 1 - j} \; \sum_{t_{i+j+k}=0}^{m_{i+j+k}-1} \; \sum_{t_{i+j+k - \ell +1}=0}^{m_{i+j+k-\ell +1}-1}\Big( 1 - \delta_{m_{i+j+k},m_{i+j+k-\ell+1}}(1 - \delta_{t_{i+j+k}, t_{i+j+k-\ell+1}})\Big)\\
\noalign{\vskip-8pt} & \displaystyle \hspace{2in} \cdot \; X_{i,j+k}^{r_i, t_{i+j+k}}\; \otimes\; X_{i+j+k-\ell+1, \ell - 1-k}^{t_{i+j+k-\ell+1}, s_{i+j}} \bigskip \\
 &= \displaystyle \sum_{k=0}^{\ell - 1 - j} \; \sum_{t_{i+j+k}=0}^{m_{i+j+k}-1} \; \sum_{t_{i+j+k - \ell +1}=0}^{m_{i+j+k}-1}\delta_{t_{i+j+k}, t_{i+j+k-\ell+1}}  \; X_{i,j+k}^{r_i, t_{i+j+k}}\; \otimes\; X_{i+j+k-\ell+1, \ell - 1-k}^{t_{i+j+k-\ell+1}, s_{i+j}}\bigskip
 \\
 &= \displaystyle \sum_{k=0}^{\ell - 1 - j} \; \sum_{t_{i+j+k}=0}^{m_{i+j+k}-1} \; X_{i,j+k}^{r_i, t_{i+j+k}}\; \otimes\; X_{i+j+k-\ell+1, \ell - 1-k}^{t_{i+j+k}, s_{i+j}}.
\end{align*}
}
Now in the Frobenius case, consider the following computations:
{\small
\begin{align*}
(\varepsilon \otimes \id) \Delta(X_{i,j}^{r_i, s_{i+j}}) 
 &= \displaystyle \sum_{k=0}^{\ell - 1 - j} \; \sum_{t_{i+j+k}=0}^{m_{i+j+k}-1} \; \varepsilon(X_{i,j+k}^{r_i, t_{i+j+k}})\; \otimes\; X_{i+j+k-\ell+1, \ell - 1-k}^{t_{i+j+k}, s_{i+j}} \bigskip\\ 
 &= \displaystyle \sum_{k=0}^{\ell - 1 - j} \; \sum_{t_{i+j+k}=0}^{m_{i+j+k}-1} \delta_{j+k, \ell-1} \; \delta_{r_i, t_{i+j+k}}\; X_{i+j+k-\ell+1, \ell - 1-k}^{t_{i+j+k}, s_{i+j}} \bigskip\\
  &=  X_{i+j+(\ell-1-j)-\ell+1, \ell - 1-(\ell-1-j)}^{r_i, s_{i+j}} \\ 
\noalign{\vskip5pt}  &=  X_{i, j}^{r_i, s_{i+j}}, \\
(\id \otimes \varepsilon) \Delta(X_{i,j}^{r_i, s_{i+j}}) 
 &= \displaystyle \sum_{k=0}^{\ell - 1 - j} \; \sum_{t_{i+j+k}=0}^{m_{i+j+k}-1} \; X_{i,j+k}^{r_i, t_{i+j+k}}\; \otimes\; \varepsilon(X_{i+j+k-\ell+1, \ell - 1-k}^{t_{i+j+k}, s_{i+j}}) \bigskip\\ 
 &= \displaystyle \sum_{k=0}^{\ell - 1 - j} \; \sum_{t_{i+j+k}=0}^{m_{i+j+k}-1} \delta_{\ell-1-k, \ell-1} \; \delta_{t_{i+j+k},s_{i+j}}\; X_{i,j+k}^{r_i, t_{i+j+k}}\bigskip\\
  &=  X_{i,j}^{r_i, s_{i+j}}.
\end{align*}
}

\noindent Therefore, $\Delta$ is counital  via $\varepsilon$ in the Frobenius case, as claimed.

%On the other hand, for the forward direction of the first statement, counitality for $(\Delta, \varepsilon)$ fails in the non-Frobenius case as shown by the following computation. Without loss of generality, assume that $m_0 \neq m_{\nu(0)} = m_{\ell -1}$ and that $m_0 \geq 2$. Then:
%{\small
%\begin{align*}
%(\varepsilon \otimes \id)\Delta(X_{0,0}^{0,0}) & = \sum_{k=0}^{\ell-1} \; \sum_{t_k = 0}^{m_k -1} \; \sum_{t_{k-\ell+1}=0}^{m_{k-\ell+1} - 1}\left( 1 - \delta_{m_k, m_{k - \ell +1}}(1-\delta_{t_k, t_{k-\ell+1}})\right) \;  \varepsilon(X_{0,k}^{0,t_k}) \; X_{k-\ell+1, \ell-1-k}^{t_{k-\ell+1}, 0}\\
%& = \sum_{k=0}^{\ell-1} \; \sum_{t_k = 0}^{m_k -1} \; \sum_{t_{k-\ell+1}=0}^{m_{k-\ell+1} - 1}\left( 1 - \delta_{m_k, m_{k - \ell +1}}(1-\delta_{t_k, t_{k-\ell+1}})\right) \;  \delta_{k,\ell-1} \delta_{0,t_k} 1_\kk \; X_{k-\ell+1, \ell-1-k}^{t_{k-\ell+1}, 0}\\
%& = \sum_{t_0=0}^{m_0 - 1}\left( 1 - \delta_{m_{\ell-1}, m_{0}}(1-\delta_{0, t_0})\right)   X_{0, 0}^{t_0, 0} \;\;  = \;\;  \sum_{t_0=0}^{m_0 - 1} X_{0, 0}^{t_0, 0} \;\;  \neq \;\; X_{0,0}^{0,0}.
%\end{align*}
%}

\vspace{-.1in}
\end{proof}

%%%%%%%%%%%%%%%%%%%%%%%%%%%%%%%%%%%%%%%%%%%%%%%%%%%%%%%%%%%%%%%%%%%%%

 \section{Examples for NSY algebras} \label{sec:examples}
In this section, we illustrate the results of the previous section for various examples of the (self-injective) NSY algebras, $B_{n,\ell}(m_0, \dots,m_{n-1})$ [Definition~\ref{def:NSY}]. First, by  Proposition~\ref{prop:Nak-perm} and Theorem~\ref{thm:SY}, we have the following statement: 
\begin{equation} \label{eq:FrobSY}
\text{$B_{n,\ell}(m_0, \dots,m_{n-1})$ is Frobenius \quad $\iff$ \quad $m_i = m_{i+\ell-1}$ for all $i$.}
\end{equation}
In Section~\ref{sec:FrobEx}, we analyze the Frobenius NSY algebras $B_{n,\ell}(1, \dots, 1)$. In Section~\ref{sec:Nakayama}, we study Nakayama's example of a non-Frobenius, self-injective algebra, the NSY algebra $B_{2,2}(2,1)$ \cite[page~624]{Nak}.  We end by comparing the Frobenius algebra $B_{4,3}(1,2,1,2)$ with the non-Frobenius, self-injective algebra $B_{4,3}(1,1,2,2)$ in Section~\ref{sec:highDim}.

 \pagebreak
 
\subsection{Frobenius examples} \label{sec:FrobEx}

\subsubsection{The NSY algebra $B_{2,2}(1,1)$} We begin with a discussion of the example $B_{2,2}(1,1)$. By Lemma~\ref{lem:Bmi-dim} and Proposition~\ref{prop:Bmi-basis},
$B_{2,2}(1,1)$ is a 4-dimensional algebra with $\kk$-basis:
$$\{ X_{0,0}^{0,0}, \; \; X_{0,1}^{0, 0}, \; \; X_{1,0}^{0,0},\; \;  X_{1,1}^{0, 0} \}.$$

\noindent By Proposition~\ref{prop:Bmi-alg}, the multiplication of this algebra is given by the following table:

\medskip

{\small
\begin{center}
\renewcommand\arraystretch{1.3}
\setlength\doublerulesep{0pt}
\begin{tabular}{r||*{4}{2|}}
* & X_{0,0}^{0,0} & X_{0,1}^{0, 0} & X_{1,0}^{0,0} & X_{1,1}^{0, 0} \\
\hline\hline
$X_{0,0}^{0,0}$ & X_{0,0}^{0,0} & X_{0,1}^{0, 0} & 0 & 0 \\ 
\hline
$X_{0,1}^{0, 0}$ & 0 & 0 & X_{0,1}^{0, 0} & 0 \\ 
\hline
$X_{1,0}^{0,0}$ & 0 & 0 & X_{1,0}^{0,0} & X_{1,1}^{0,0} \\ 
\hline
$X_{1,1}^{0, 0}$ & X_{1,1}^{0,0} & 0 & 0 & 0 \\ 
\hline
\end{tabular}
\end{center}
}

\bigskip

\noindent Moreover, the unit of $B_{2,2}(1,1)$ is $X_{0,0}^{0,0} + X_{1,0}^{0,0}$.

Now, by Theorem~\ref{thm:main}(a), we compute the comultiplication  formula as follows: 
{\small
\begin{align*}
    \Delta(X_{0,0}^{0,0}) &= \sum_{k=0}^{1 } \; \sum_{t_k=0}^{m_{k}-1} \; \sum_{t_{k - 1}=0}^{m_{k-1}-1}\Big( 1 - \delta_{m_{k},m_{k-1}}(1 - \delta_{t_{k}, t_{k-1}})\Big) \cdot \; X_{0,k}^{0, t_{k}}\; \otimes\; X_{k-1, 1-k}^{t_{k-1}, 0} \\
 &= \sum_{k=0}^{1 } \; \sum_{t_k=0}^{0} \; \sum_{t_{k - 1}=0}^{0} \Big( 1 - \delta_{1,1}(1 - \delta_{t_{k}, t_{k-1}})\Big) \cdot \; X_{0,k}^{0, t_{k}}\; \otimes\; X_{k-1, 1-k}^{t_{k-1}, 0} \\
 &= \sum_{k=0}^{1 } X_{0,k}^{0, 0} \; \otimes\; X_{k-1, 1-k}^{0, 0} \\
 &= X_{0,0}^{0,0} \otimes X_{1,1}^{0,0} + X_{0,1}^{0,0} \otimes X_{0,0}^{0,0} \;\;;\\
 \\
\Delta(X_{0,1}^{0,0}) &= \sum_{k=0}^{0} \; \sum_{t_{k+1}=0}^{m_{1+k}-1} \; \sum_{t_k=0}^{m_{k}-1}\Big( 1 - \delta_{m_{1+k},m_{k}}(1 - \delta_{t_{1+k}, t_{k}})\Big) \cdot \; X_{0,1+k}^{0, t_{1+k}}\; \otimes\; X_{k, 1-k}^{t_{k}, 0} \\
 &=\sum_{k=0}^{0 } \; \sum_{t_k=0}^{0} \; \sum_{t_{k - 1}=0}^{0} \Big( 1 - \delta_{1,1} \left(1 - \delta_{t_{1+k}, t_{k}} \right) \Big) \cdot \; X_{0,k+1}^{0, t_{k+1}}\; \otimes\; X_{k, 1-k}^{t_{k}, 0} \\
 &= X_{0,1}^{0,0} \otimes X_{0,1}^{0,0} \;\;; \\
 \\
\Delta(X_{1,0}^{0, 0}) 
&=  \sum_{k=0}^{1} \; \sum_{t_{1+k}=0}^{m_{1+k}-1} \; \sum_{t_k=0}^{m_{k}-1}\Big( 1 - \delta_{m_{k+1},m_{k}}(1 - \delta_{t_{1+k}, t_{k}})\Big)  \cdot \; X_{1,k}^{0, t_{1+k}}\; \otimes\; X_{k, 1-k}^{t_{k}, 0} \\
&=  \sum_{k=0}^{1} \Big( 1 - (1 - \delta_{0,0})\Big)  \cdot \; X_{1,k}^{0, 0}\; \otimes\; X_{k, 1-k}^{0, 0} \\
%&=  \sum_{k=0}^{1}  X_{1,k}^{0, 0}\; \otimes\; X_{k, 1-k}^{0, 0} \\
&=  X_{1,0}^{0, 0} \otimes X_{0, 1}^{0, 0} + X_{1,1}^{0, 0} \otimes X_{1, 0}^{0, 0} \;\;;\\
\\
\Delta( X_{1,1}^{0,0}) &= \sum_{k=0}^{0} \; \sum_{t_{2+k}=0}^{m_{2+k}-1} \; \sum_{t_{k+1}=0}^{m_{k+1}-1}\Big( 1 - \delta_{m_{2+k},m_{k+1}}(1 - \delta_{t_{2+k}, t_{k+1}})\Big)\cdot \; X_{1,1+k}^{0, t_{2+k}}\; \otimes\; X_{k+1, 1-k}^{t_{k+1}, 0}\\
&= \sum_{k=0}^{0} \; \sum_{t_{2+k}=0}^{0} \; \sum_{t_{k+1}=0}^{0}\Big( 1 - \delta_{1,1}(1 - \delta_{t_{2+k}, t_{k+1}})\Big)\cdot \; X_{1,1+k}^{0, t_{2+k}}\; \otimes\; X_{k+1, 1-k}^{t_{k+1}, 0} \\
&= \sum_{k=0}^{0} \Big( 1 - (1 - \delta_{0, 0})\Big)\cdot \; X_{1,1+k}^{0, 0}\; \otimes\; X_{k+1, 1-k}^{0, 0} \\
%&= \sum_{k=0}^{0}  X_{1,1+k}^{0, 0}\; \otimes\; X_{k+1, 1-k}^{0, 0}\\
&= X_{1,1}^{0, 0}\; \otimes\; X_{1, 1}^{0, 0}.
\end{align*}
}

\noindent This gives $B_{2,2}(1,1)$ the structure of a non-counital Frobenius algebra. By~\eqref{eq:FrobSY} and Theorem~\ref{thm:main}(b), the counit for this algebra exists and is given by 
$$ \varepsilon(X_{0,0}^{0, 0}) \;= \; \varepsilon(X_{1,0}^{0, 0})\; =\; 0_\kk,  \quad\quad \varepsilon(X_{0,1}^{0, 0})\; = \; \varepsilon(X_{1,1}^{0, 0})\; =\; 1_\kk.$$
%\begin{align*}
%    \varepsilon(X_{0,0}^{0, 0}) &= \delta_{0, 1} \; \delta_{0, 0 }\; 1_\kk = 0 \\
%    \varepsilon(X_{0,1}^{0, 0}) &= \delta_{1, 1} \; \delta_{0, 0} \; 1_\kk = 1_\kk \\
%    \varepsilon(X_{1,0}^{0, 0}) &= \delta_{0, 1} \; \delta_{0, 0} \; 1_\kk = 0 \\
%    \varepsilon(X_{1,1}^{0, 0}) &= \delta_{1, 1} \; \delta_{0, 0} \; 1_\kk = 1_\kk 
%\end{align*}

\subsubsection{The NSY algebras $B_{n,\ell}(1,\dots,1)$} We can  generalize the work in the previous section by considering the NSY algebras $B_{n,\ell}(1,\dots,1)$. By Lemma~\ref{lem:Bmi-dim} and Proposition~\ref{prop:Bmi-basis}, $B_{n,\ell}(1,\dots,1)$ is a $n \ell$-dimensional algebra with basis given by $\{X_{i,j}^{0,0}\}_{i,j}$. The general comultiplication formula for this case simplifies to:
{\small
\begin{align*}
\Delta(X_{i,j}^{0, 0}) 
&= \displaystyle \sum_{k=0}^{\ell - 1 - j} \; \sum_{t_{i+j+k}=0}^{m_{i+j+k}-1} \; \sum_{t_{i+j+k - \ell +1}=0}^{m_{i+j+k-\ell +1}-1}\Big( 1 - \delta_{m_{i+j+k},m_{i+j+k-\ell+1}}(1 - \delta_{t_{i+j+k}, t_{i+j+k-\ell+1}})\Big)\\
\noalign{\vskip-8pt} & \displaystyle \hspace{2in} \cdot \; X_{i,j+k}^{0, t_{i+j+k}}\; \otimes\; X_{i+j+k-\ell+1, \ell - 1-k}^{t_{i+j+k-\ell+1}, 0} \\
 &= \displaystyle \sum_{k=0}^{\ell - 1 - j} \; \sum_{t_{i+j+k}=0}^{0} \; \sum_{t_{i+j+k - \ell +1}=0}^{0}\Big( 1 - \delta_{0,0}(1 - \delta_{t_{i+j+k}, t_{i+j+k-\ell+1}})\Big)\\
\noalign{\vskip-8pt} & \displaystyle \hspace{2in} \cdot \; X_{i,j+k}^{0, t_{i+j+k}}\; \otimes\; X_{i+j+k-\ell+1, \ell - 1-k}^{t_{i+j+k-\ell+1}, 0} \\
 &= \displaystyle \sum_{k=0}^{\ell - 1 - j} \; X_{i,j+k}^{0, 0}\; \otimes\; X_{i+j+k-\ell+1, \ell - 1-k}^{0, 0}. 
\end{align*}
}
\noindent Moreover, the counit formula simplifies to $\varepsilon(X_{i,j}^{0, 0}) \;=  \delta_{j, \ell-1} \; 1_\kk.$

\smallskip

The reader may wish to compare these results with work of Wang-Zhang in \cite{WangZhang}.

\smallskip

\subsection{Nakayama's non-Frobenius self-injective algebra}
\label{sec:Nakayama} 
Next, we consider Nakayama's example of a finite-dimensional self-injective algebra, that is not Frobenius \cite[page~624]{Nak}. This algebra is 9-dimensional (see~Lemma~\ref{lem:Bmi-dim}) and is isomorphic to the NSY algebra $B_{2,2}(2,1)$; see  \cite[Section~3]{SYpaper} for a proof. By Proposition~\ref{prop:Bmi-basis}, the $\kk$-basis of 
$B_{2,2}(2,1)$ is
$$\{ X_{0,0}^{0,0}, \; \; X_{0,0}^{0,1},  \; \; X_{0,0}^{1,0}, \; \; X_{0,0}^{1,1}, \; \; X_{0,1}^{0,0},  \; \; X_{0,1}^{1,0}, \; \; X_{1,0}^{0,0}, \; \; X_{1,1}^{0,0}, \; \; X_{1,1}^{0,1}\},$$ 
Via the multiplication table below, these basis elements correspond, respectively, to the basis elements of Nakayama's algebra in \cite{Nak}:
$$\{ \alpha_{11}, \; \; \alpha_{12},  \; \; \alpha_{21}, \; \; \alpha_{22}, \; \; \gamma_1,  \; \; \gamma_2, \; \; \beta, \; \; \delta_1, \; \; \delta_2\}.$$
\noindent Now by Proposition~\ref{prop:Bmi-alg}, the multiplication of $B_{2,2}(2,1)$ is given by the following table:

\medskip

{\small
\begin{center}
\renewcommand\arraystretch{1.3}
\setlength\doublerulesep{0pt}
\begin{tabular}{r||*{9}{2|}}
* & X_{0,0}^{0,0} & X_{0,0}^{0, 1} & X_{0,0}^{1,0} & X_{0,0}^{1, 1} & X_{0,1}^{0, 0} & X_{0,1}^{1, 0} & X_{1,0}^{0, 0} & X_{1,1}^{0, 0} & X_{1,1}^{0, 1} \\
\hline\hline
$X_{0,0}^{0,0}$ & X_{0,0}^{0,0} & X_{0,0}^{0,1} & 0 & 0 & X_{0,1}^{0,0} & 0 & 0 & 0 & 0 \\ 
\hline
$X_{0,0}^{0,1}$ & 0 & 0 & X_{0,0}^{0,0} & X_{0,0}^{0,1} & 0 & X_{0,1}^{0,0} & 0 & 0 & 0 \\ 
\hline
$X_{0,0}^{1,0}$ & X_{0,0}^{1,0} & X_{0,0}^{1,1} & 0 & 0 & X_{0,1}^{1,0} & 0 & 0 & 0 & 0 \\ 
\hline
$X_{0,0}^{1,1}$ & 0 & 0 & X_{0,0}^{1,0} & X_{0,0}^{1,1} & 0 & X_{0,1}^{1,0} & 0 & 0 & 0 \\ 
\hline
$X_{0,1}^{0,0}$ & 0 & 0 & 0 & 0 & 0 & 0 & X_{0,1}^{0,0} & 0 & 0 \\ 
\hline
$X_{0,1}^{1,0}$ & 0 & 0 & 0 & 0 & 0 & 0 & X_{0,1}^{1,0} & 0 & 0 \\ 
\hline
$X_{1,0}^{0,0}$ & 0 & 0 & 0 & 0 & 0 & 0 & X_{1,0}^{0,0} & X_{1,1}^{0,0} & X_{1,1}^{0,1} \\ 
\hline
$X_{1,1}^{0,0}$ & X_{1,1}^{0,0} & X_{1,1}^{0,1} & 0 & 0 & 0 & 0 & 0 & 0 & 0 \\ 
\hline
$X_{1,1}^{0,1}$ & 0 & 0 & X_{1,1}^{0,0} & X_{1,1}^{0,1} & 0 & 0 & 0 & 0 & 0 \\ 
\hline
\end{tabular}
\end{center}
}

\medskip

\noindent Moreover, the unit of $B_{2,2}(2,1)$ is $X_{0,0}^{0,0} + X_{0,0}^{1,1} + X_{1,0}^{0,0}$.

By Theorem~\ref{thm:main}(a), we compute the comultiplication for $B_{2,2}(2,1)$ as follows:
{\small $$\Delta(X_{i,j}^{r_i, s_{i+j}}) 
= \displaystyle \sum_{k=0}^{1 - j} \; \sum_{t_{i+j+k}=0}^{m_{i+j+k}-1} \; \sum_{t_{i+j+k - 1}=0}^{m_{i+j+k-1}-1} \; X_{i,j+k}^{r_i, t_{i+j+k}}\; \otimes\; X_{i+j+k-1, 1-k}^{t_{i+j+k-1}, s_{i+j}}.$$}
So we get the output below:
{\small
\begin{align*}
\Delta(X_{0,0}^{0,0}) &= \textstyle \sum_{k=0}^{1}  \sum_{t_{k}=0}^{m_{k}-1} \sum_{t_{k - 1}=0}^{m_{k-1}-1} X_{0,k}^{0, t_{k}} \otimes X_{k-1, 1-k}^{t_{k-1}, 0} \\
&= X_{0,0}^{0, 0} \otimes X_{1, 1}^{0, 0} 
+ X_{0,0}^{0, 1} \otimes X_{1, 1}^{0, 0} 
+ X_{0,1}^{0, 0} \otimes X_{0, 0}^{0, 0} 
+ X_{0,1}^{0, 0} \otimes X_{0, 0}^{1, 0} ;\\
\noalign{\vskip5pt} \Delta(X_{0,0}^{0,1}) &= X_{0,0}^{0, 0} \otimes X_{1, 1}^{0, 1} 
+ X_{0,0}^{0, 1} \otimes X_{1, 1}^{0, 1} 
+ X_{0,1}^{0, 0} \otimes X_{0, 0}^{0, 1} 
+ X_{0,1}^{0, 0} \otimes X_{0, 0}^{1, 1} ;\\
\noalign{\vskip5pt}
\Delta(X_{0,0}^{1,0}) &= X_{0,0}^{1, 0} \otimes X_{1, 1}^{0, 0} 
+ X_{0,0}^{1, 1} \otimes X_{1, 1}^{0, 0} 
+ X_{0,1}^{1, 0} \otimes X_{0, 0}^{0, 0} 
+ X_{0,1}^{1, 0} \otimes X_{0, 0}^{1, 0} ;\\
\noalign{\vskip5pt}
\Delta(X_{0,0}^{1,1}) &= X_{0,0}^{1, 0} \otimes X_{1, 1}^{0, 1} 
+ X_{0,0}^{1, 1} \otimes X_{1, 1}^{0, 1} 
+ X_{0,1}^{1, 0} \otimes X_{0, 0}^{0, 1} 
+ X_{0,1}^{1, 0} \otimes X_{0, 0}^{1, 1} ;\\
\noalign{\vskip5pt}
\Delta(X_{0,1}^{0,0}) &= \textstyle \sum_{k=0}^{0}  \sum_{t_{k+1}=0}^{m_{k+1}-1} \sum_{t_{k}=0}^{m_{k}-1} X_{0,k+1}^{0, t_{k+1}} \otimes X_{k, 1-k}^{t_{k}, 0}\\
&= X_{0,1}^{0,0} \otimes X_{0, 1}^{0, 0} + X_{0,1}^{0,0} \otimes X_{0, 1}^{1, 0} ;\\
\noalign{\vskip5pt}
\Delta(X_{0,1}^{1,0}) &= X_{0,1}^{1,0} \otimes X_{0, 1}^{0, 0} + X_{0,1}^{1,0} \otimes X_{0, 1}^{1, 0} ;\\
\noalign{\vskip5pt}
\Delta(X_{1,0}^{0,0}) &= \textstyle \sum_{k=0}^{1}  \sum_{t_{k+1}=0}^{m_{k+1}-1} \sum_{t_{k}=0}^{m_{k}-1} X_{1,k}^{0, t_{k+1}} \otimes X_{k, 1-k}^{t_{k}, 0}\\
&=X_{1,0}^{0,0} \otimes X_{0,1}^{0, 0}
+ X_{1,0}^{0,0} \otimes X_{0, 1}^{1, 0}
+ X_{1,1}^{0,0} \otimes X_{1, 0}^{0, 0}
+ X_{1,1}^{0,1} \otimes X_{1, 0}^{0, 0} ;\\
\noalign{\vskip5pt}
\Delta(X_{1,1}^{0,0}) &= \textstyle \sum_{k=0}^{0}  \sum_{t_{k}=0}^{m_{k}-1} \sum_{t_{k+1}=0}^{m_{k+1}-1} X_{1,k+1}^{0, t_{k}} \otimes X_{k+1, 1-k}^{t_{k+1}, 0}\\
&=X_{1,1}^{0, 0} \otimes X_{1, 1}^{0, 0}+X_{1,1}^{0, 1} \otimes X_{1,1}^{0, 0} ;\\
\noalign{\vskip5pt}
\Delta(X_{1,1}^{0,1}) &=X_{1,1}^{0, 0} \otimes X_{1, 1}^{0, 1} + X_{1,1}^{0, 1} \otimes X_{1,1}^{0, 1}.
\end{align*}
}

Lastly, we see that the comultiplication is not counital using $\varepsilon$ in Theorem~\ref{thm:main}(b):
{\small
\begin{align*}
(\varepsilon \otimes \id) \Delta(X_{0,0}^{0, 0}) &= \varepsilon(X_{0,0}^{0, 0}) \; X_{1, 1}^{0, 0} 
+ \varepsilon(X_{0,0}^{0, 1}) \;  X_{1, 1}^{0, 0} 
+ \varepsilon(X_{0,1}^{0, 0}) \;  X_{0, 0}^{0, 0} 
+ \varepsilon(X_{0,1}^{0, 0}) \;  X_{0, 0}^{1, 0} =  X_{0, 0}^{0, 0} +  X_{0, 0}^{1, 0},\\
(\id \otimes \varepsilon) \Delta(X_{0,0}^{0, 0}) &= X_{0,0}^{0, 0} \; \varepsilon(X_{1, 1}^{0, 0}) 
+ X_{0,0}^{0, 1} \;  \varepsilon(X_{1, 1}^{0, 0}) 
+ X_{0,1}^{0, 0} \;  \varepsilon(X_{0, 0}^{0, 0}) 
+ X_{0,1}^{0, 0} \;  \varepsilon(X_{0, 0}^{1, 0}) =  X_{0, 0}^{0, 0} +  X_{0, 0}^{0, 1}.
\end{align*}
}

\subsection{Higher-dimensional examples}\label{sec:highDim}
In this section we discuss two examples of NSY algebras of dimensions 28 and 27, respectively in Sections~\ref{sec:1212} and~\ref{sec:1122}, and provide examples of the comultiplication formula for these cases.

\subsubsection{The NSY algebras $B_{4,3}(1,2,1,2)$}
\label{sec:1212}
By Lemma~\ref{lem:Bmi-dim} and Proposition~\ref{prop:Bmi-basis}, we get that
{\small
\begin{align*}
\textnormal{dim}(B_{4,3}(1,2,1,2))  &=\sum_{i=0}^3\sum_{j=0}^2 m_{i} m_{i+j}\\
&= 1(1+2+1) + 2(2+1+2) + 1(1+2+1) + 2(2+1+2) = 28.
\end{align*}
}

\noindent Since $m_i=m_{i+\ell-1}$ for all $i$, $B_{4,3}(1,2,1,2)$ is a Frobenius algebra by~\eqref{eq:FrobSY}. Thus, using Theorem~\ref{thm:main}(a) the comultiplication formula simplifies to
{\small
\begin{align*}
 \Delta(X_{i,j}^{r_i, s_{i+j}}) &= \sum_{k=0}^{\ell - 1 - j} \; \sum_{t_{i+j+k}=0}^{m_{i+j+k}-1} \; X_{i,j+k}^{r_i, t_{i+j+k}}\; \otimes\; X_{i+j+k-\ell+1, \ell - 1-k}^{t_{i+j+k}, s_{i+j}} \\
 & =  \sum_{k=0}^{2 - j} \; \sum_{t_{i+j+k}=0}^{m_{i+j+k}-1} \; X_{i,j+k}^{r_i, t_{i+j+k}}\; \otimes\; X_{i+j+k-2, 2-k}^{t_{i+j+k}, s_{i+j}}\; .
\end{align*}
}

\noindent
Below we provide the comultiplication formula for two basis elements $X_{0,0}^{0,0}$ and $X_{2,1}^{0,1}$.
{\small
\begin{align*}
    \Delta(X_{0,0}^{0,0}) &= \sum_{k=0}^2 \sum_{t_k=0}^{m_k -1} X_{0,k}^{0,t_k} \otimes X_{k-2,2-k}^{t_k,0} \\
    &= X_{0,0}^{0,0}\otimes X_{2,2}^{0,0} + X_{0,1}^{0,0}\otimes X_{3,1}^{0,0} + X_{0,1}^{0,1}\otimes X_{3,1}^{1,0} + X_{0,2}^{0,0}\otimes X_{0,0}^{0,0} \;\;; \\
    \Delta(X_{2,1}^{0,1}) & = \sum_{k=0}^{1} \sum_{t_{3+k}=0}^{m_{3+k}-1} X_{2,1+k}^{0,t_{3+k}} \otimes X_{k+1,2-k}^{t_{3+k},1} \\
    & = X_{2,1}^{0,0}\otimes X_{1,2}^{0,1} + X_{2,1}^{0,1}\otimes X_{1,2}^{1,1} + X_{2,2}^{0,0}\otimes X_{2,1}^{0,1} \;\;.
\end{align*} 
}

\noindent Next, we show that the comultiplication is counital for these basis elements using Theorem~\ref{thm:main}(b):
{\small
\begin{align*}
    (\id\otimes \varepsilon) \Delta (X_{0,0}^{0,0}) & = X_{0,0}^{0,0} \; \varepsilon(X_{2,2}^{0,0}) + X_{0,1}^{0,0} \; \varepsilon(X_{3,1}^{0,0}) + X_{0,1}^{0,1} \; \varepsilon(X_{3,1}^{1,0}) + X_{0,2}^{0,0} \; \varepsilon(X_{0,0}^{0,0}) = X^{0,0}_{0,0} ; \\
    (\varepsilon \otimes \id)\Delta(X_{0,0}^{0,0}) &= \varepsilon( X_{0,0}^{0,0})  \; X_{2,2}^{0,0} +  \varepsilon(X_{0,1}^{0,0})  \; X_{3,1}^{0,0} +  \varepsilon(X_{0,1}^{0,1})  \; X_{3,1}^{1,0} +   \varepsilon(X_{0,2}^{0,0})  \; X_{0,0}^{0,0} = X^{0,0}_{0,0} ;\\
   \noalign{\vskip8pt} (\id\otimes \varepsilon)\Delta(X_{2,1}^{0,1}) &= X_{2,1}^{0,0}  \; \varepsilon(X_{1,2}^{0,1}) + X_{2,1}^{0,1} \; \varepsilon(X_{1,2}^{1,1}) + X_{2,2}^{0,0} \; \varepsilon(X_{2,1}^{0,1}) = X_{2,1}^{0,1} ;\\
    (\varepsilon\otimes \id)\Delta(X_{2,1}^{0,1}) &=   \varepsilon(X_{2,1}^{0,0})  \; X_{1,2}^{0,1} +   \varepsilon(X_{2,1}^{0,1})  \; X_{1,2}^{1,1} +   \varepsilon(X_{2,2}^{0,0})  \; X_{2,1}^{0,1}  = X_{2,1}^{0,1}.
\end{align*}
}

We will compare this Frobenius algebra with the non-counital Frobenius algebra, \linebreak $B_{4,3}(1,1,2,2)$, in the next section.

\subsubsection{The NSY algebras $B_{4,3}(1,1,2,2)$}
\label{sec:1122}
By Lemma~\ref{lem:Bmi-dim} and Proposition~\ref{prop:Bmi-basis}, we get that
{\small
\begin{align*}
    \textnormal{dim}(B_{4,3}(1,1,2,2)) & = \sum_{i=0}^3\sum_{j=0}^2 m_{i} m_{i+j}\\
    &= 1(1+1+2)+1(1+2+2)+2(2+2+1)+2(2+1+1)= 27.
\end{align*}
}

\noindent Since $1 = m_0 \neq m_{0+\ell-1}=m_{2}=2$, $B_{4,3}(1,1,2,2)$ is not a Frobenius algebra by~\eqref{eq:FrobSY}. Below we provide an example showing that the comultiplication $\Delta$ is not counital (via $\varepsilon$ in Theorem~\ref{thm:main}(b)) in this case. By Theorem~\ref{thm:main}(a), we have that
{\small
\begin{align*} 
    \Delta(X_{2,1}^{0,1}) = \sum_{k=0}^{1} \; \sum_{t_{k+3}=0}^{m_{k+3}-1} \; \sum_{t_{k+1}=0}^{m_{k+1}-1}\Big( 1 - \delta_{m_{k+3},m_{k+1}}(1 - \delta_{t_{k+3}, t_{k+1}})\Big) \; X_{2,1+k}^{0, t_{3+k}}\; \otimes\; X_{k+1, 2-k}^{t_{k+1}, 1} 
\end{align*}
}
Since $m_{k+3}\neq m_{k+1}$ for all $k$, we get that
{\small
\begin{align*}
    \Delta(X_{2,1}^{0,1}) & = \sum_{k=0}^{1} \; \sum_{t_{k+3}=0}^{m_{k+3}-1} \; \sum_{t_{k+1}=0}^{m_{k+1}-1} X_{2,1+k}^{0, t_{3+k}}\; \otimes\; X_{k+1, 2-k}^{t_{k+1}, 1} \\
    & = X_{2,1}^{0,0}\otimes X_{1,2}^{0,1} + X_{2,1}^{0,1}\otimes X_{1,2}^{0,1} + X_{2,2}^{0,0}\otimes X_{2,1}^{0,1} + X_{2,2}^{0,0}\otimes X_{2,1}^{1,1}. 
\end{align*}
}

\noindent Compare this with the comultiplication for $B_{4,3}(1,2,1,2)$ in the previous section. We can now see that $\Delta$ is not counital because
{\small
\begin{align*}
     (\varepsilon \otimes \id)\Delta(X_{2,1}^{0,1}) &= \varepsilon(X_{2,1}^{0,0}) \;  X_{1,2}^{0,1} + \varepsilon(X_{2,1}^{0,1}) \;  X_{1,2}^{0,1} + \varepsilon(X_{2,2}^{0,0})  \; X_{2,1}^{0,1} + \varepsilon(X_{2,2}^{0,0}) \;  X_{2,1}^{1,1} =X_{2,1}^{0,1}+ X_{2,1}^{1,1},  \\ (\id\otimes\varepsilon)\Delta(X_{2,1}^{0,1}) &= X_{2,1}^{0,0} \; \varepsilon(X_{1,2}^{0,1}) + X_{2,1}^{0,1} \; \varepsilon(X_{1,2}^{0,1}) + X_{2,2}^{0,0} \; \varepsilon(X_{2,1}^{0,1}) + X_{2,2}^{0,0} \; \varepsilon(X_{2,1}^{1,1}) 
    = 0.
\end{align*}
}
%%%%%%%%%%%%%%%%%%%%%%%%%%%%%%%%%%%%%%%%%%%%%%%%%%%%%%%%%%%%%%%%%%%%%
%%%%%%%%%%%%%%%%%%%%%%%%%%%%%%%%%%%%%%%%%%%%%%%%%%%%%%%%%%%%%%%%%%%%%
%%%%%%%%%%%%%%%%%%%%%%%%%%%%%%%%%%%%%%%%%%%%%%%%%%%%%%%%%%%%%%%%%%%%%

\section{Finite dimensional weak Hopf algebras are non-counital Frobenius}
\label{sec:weak}

In this section, we establish an explicit non-counital Frobenius structure for finite-dimen-sional (f.d.) weak Hopf algebras [Definition~\ref{def:weak}], thereby proving Conjecture~\ref{main-conj} for another large class of self-injective algebras. Background material is provided in Section~\ref{sec:weak-backgr}, and the main result is presented in Section~\ref{sec:weak-main}.

\subsection{Background on weak Hopf algebras} \label{sec:weak-backgr}
The following material is from \cite{BNS}.

\begin{definition} \label{def:wba} 
A \textit{weak bialgebra} over $\kk$ is a quintuple $(H,m,u,\Delta_{\textnormal{wk}}, \varepsilon_{\textnormal{wk}})$ such that
\begin{enumerate}[label=(\roman*)]
    \item $(H,m,u)$ is a $\kk$-algebra,
    \smallskip
    \item $(H, \;\Delta_{\textnormal{wk}},\; \varepsilon_{\textnormal{wk}})$ is a $\kk$-coalgebra,\smallskip
    \item \label{def:wba3} $\Delta_{\textnormal{wk}}(ab)=\Delta_{\textnormal{wk}}(a)\;\Delta_{\textnormal{wk}}(b)$ for all $a,b \in H$,\smallskip
    \item \label{def:wba4} $\varepsilon_{\textnormal{wk}}(abc)=\varepsilon_{\textnormal{wk}}(ab_1)\;\varepsilon_{\textnormal{wk}}(b_2c)=\varepsilon_{\textnormal{wk}}(ab_2)\;\varepsilon_{\textnormal{wk}}(b_1c)$ for all $a,b,c \in H$, \smallskip
    \item \label{def:wba5} $\Delta_{\textnormal{wk}}^2(1_H)=(\Delta_{\textnormal{wk}}(1_H) \otimes 1_H)(1_H \otimes \Delta_{\textnormal{wk}}(1_H))=(1_H \otimes \Delta_{\textnormal{wk}}(1_H))(\Delta_{\textnormal{wk}}(1_H) \otimes 1_H)$.\smallskip
\end{enumerate}
Here, we use the sumless Sweedler notation, for $h \in H$: $$\Delta_{\textnormal{wk}}(h):= h_1 \otimes h_2.$$
\end{definition}

\begin{definition}[$\varepsilon_s$, $\varepsilon_t$, $H_s$, $H_t$] \label{def:eps}
Let $(H, m, u, \Delta_{\textnormal{wk}}, \varepsilon_{\textnormal{wk}})$ be a weak bialgebra. We define the {\it source and target counital maps}, respectively as follows:
\[
\begin{array}{ll}
    \varepsilon_s: H \to H, & x \mapsto 1_1\;\varepsilon_{\textnormal{wk}}(x1_2) \\
    \varepsilon_t: H \to H,  & x \mapsto \varepsilon_{\textnormal{wk}}(1_1x)\;1_2.
\end{array}
\]
We denote the images of these maps as $H_s:=\varepsilon_s(H)$ and $H_t:=\varepsilon_t(H)$, which are subalgebras of $H$: the \emph{source counital subalgebra} and  the \emph{target counital subalgebra} of $H$, respectively.
\end{definition}

\begin{definition} \label{def:weak}
A \textit{weak Hopf algebra} is a sextuple $(H,m,u,\Delta_\textnormal{wk},\varepsilon_\textnormal{wk}, S)$, where the quintuple $(H,m,u,\Delta_\textnormal{wk},\varepsilon_\textnormal{wk})$ is a weak bialgebra and $S: H \to H$ is a $\kk$-linear map called the \textit{antipode} that satisfies the following properties for all $h\in H$:
 $$S(h_1)h_2=\varepsilon_s(h), \quad \quad
h_1S(h_2)=\varepsilon_t(h), \quad \quad
S(h_1)h_2S(h_3)=S(h).$$
\end{definition}

It follows that $S$ is anti-multiplicative with respect to $m$,  and anti-comultiplicative with respect to $\Delta_\textnormal{wk}$.

\begin{definitionproposition}\cite[page~5]{BNS} \label{def:Hopf}
Take a weak Hopf algebra $H$. Then the following conditions are equivalent:
\begin{enumerate}
    \item  $\Delta_\textnormal{wk}(1_H)=1_H\otimes 1_H$;\smallskip
 \item $\ep_\textnormal{wk}(xy)=\ep_\textnormal{wk}(x)\;\ep_\textnormal{wk}(y)$ for all $x,y\in H$;\smallskip
\item $S(x_1)x_2=\ep_\textnormal{wk}(x)\;1_H$ for all $x \in H$; and \smallskip
 \item $x_1S(x_2)=\ep_\textnormal{wk}(x)\;1_H$ for all $x \in H$.\smallskip
 \end{enumerate}
In this case,  $H$ is  a {\it Hopf algebra}.  \qed
 \end{definitionproposition}

\begin{hypothesis}
Recall we assume that all algebras in this work are finite-dimensional, and we will continue to assume this for weak Hopf algebras. 
\end{hypothesis}

\begin{remark}
 Here, $H^*$ will be the usual $\kk$-linear dual of $H$, which admits the structure of  a weak Hopf algebra \cite[page~5]{BNS}.
\end{remark}

Now we consider an important set of elements whose existence will determine when a finite-dimensional weak Hopf algebra is Frobenius.

\begin{definition} \label{def:integral}
Let $H$ be a weak Hopf algebra. 
\begin{enumerate}
    \item An element $\Lambda$ in $H$ is called a {\it left} (resp., {\it right}) {\it integral} if $h \Lambda = \ep_t(h) \Lambda$ (resp., $\Lambda h= \Lambda\ep_s(h)$) for all $h \in H$. %\small1skip
    \item Let $I^L(H)$ (resp., $I^R(H)$) denote the space of left (resp., right) integrals of $H$. \smallskip
    \item  A left/right integral $\Lambda \in H$ is called {\it non-degenerate} if the linear map
    \begin{equation} \label{eq:psi-bij}
     \Psi_{\Lambda}: H^* \to H, \; \phi \mapsto  \Lambda_1 \phi(\Lambda_2)
    %\quad \quad \text{and} \quad \quad \Psi':H^* \to H, \; \phi' \mapsto \phi'(\Lambda_1) \Lambda_2
    \end{equation}
    is a bijection.  
\end{enumerate}
\end{definition}

\begin{remark} \label{rem:nondeg}
\begin{enumerate}
\item Note that the map $\Psi_\Lambda$ above is bijective if and only if the map 
$$\Phi_\Lambda : H^* \xrightarrow{(\Psi_\Lambda)^*} H^{**} \xrightarrow{\sim} H \xrightarrow{S} H, \; \; \phi \mapsto \phi(\Lambda_1)S(\Lambda_2)$$ 
is bijective, as the antipode of a finite-dimensional weak Hopf algebra is bijective \cite[Theorem~2.10]{BNS}. Moreover, this occurs if and only if the composition below is bijective:
$$\Phi'_\Lambda: H^*\xrightarrow{(\Phi_\Lambda)^*} H^{**} \xrightarrow{\sim} H, \; \; \phi \mapsto \Lambda_1 \phi(S(\Lambda_2)).$$
\item Note that $(\Psi_\Lambda)^*$ is a left $H$-module map using the left regular $H$-actions on $H$, $H^*$:
\[
\begin{array}{rl}
h\cdot(\Psi_\Lambda)^*(\phi) &= h \cdot (\phi(\Lambda_1) \Lambda_2) = \phi(\Lambda_1)(h \Lambda_2) \smallskip\\
&\overset{\textnormal{$(\ast)$}}{=} \phi(S(h)\Lambda_1)\Lambda_2 = (h \cdot \phi)(\Lambda_1)\Lambda_2 = (\Psi_\Lambda)^*(h \cdot \phi),
\end{array}
\]
where $(\ast)$ holds by \cite[Lemma 3.2(b)]{BNS}. So, $\Psi_\Lambda$ is bijective if and only if there is a unique solution to the equation $\Psi_\Lambda(\lambda) = 1_H$, which then holds if and only if there is a unique solution to the equation $\Phi'_\Lambda(\lambda) = 1_H$.
\end{enumerate}
\end{remark}

\begin{theorem} \cite[Corollary~3.10, Theorems~3.11,~3.16,~3.18]{BNS} \label{thm:BNS} Let $H$ be a weak Hopf algebra. Then the following statements hold.
\begin{enumerate} [font=\upshape]
    \item $H$ is self-injective. \smallskip
    \item Non-zero left (and right) integrals of $H$ exist. \smallskip
    \item $H$ is Frobenius if and only if $H$ has a non-degenerate left integral. \smallskip
    \item If $H$ has a non-degenerate left integral $\Lambda$ so that $\Psi_\Lambda(\lambda) = 1_H$ for some $\lambda \in H^*$, then $\lambda$ is a non-degenerate left integral of $H^*$. \qed
\end{enumerate}
\end{theorem}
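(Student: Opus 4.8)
The plan is to derive all four parts from the theory of integrals in Definition~\ref{def:integral} together with the canonical map $\Psi_\Lambda: H^* \to H$, treating them in the order (b), (a), (c), (d) since the existence of integrals underpins everything else. First I would prove part (b): the space $I^L(H) = \{\Lambda \in H : h\Lambda = \varepsilon_t(h)\Lambda \text{ for all } h \in H\}$ is cut out by linear conditions, so showing it is nonzero is a matter of forcing a dimension count. I would run a Larson--Sweedler-style argument adapted to the weak setting, realizing $I^L(H)$ as a nonzero space of left $H$-module maps into a suitable summand of $H$, using the rational structure of $H^*$ and finite-dimensionality. Right integrals then follow symmetrically, or by transporting left integrals through the antipode $S$ of Definition~\ref{def:weak}, which interchanges the two notions.

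For part (a), the cleanest route is to exhibit $H$ as a Frobenius extension of its target counital subalgebra $H_t$ (Definition~\ref{def:eps}), with the Frobenius system assembled from a nonzero left integral supplied by part (b) together with $\varepsilon_t$. Since $H_t$ is separable, hence self-injective, and since a Frobenius extension carries injective modules to injective modules while preserving finite generation, the injectivity of ${}_H H$ and $H_H$ over $\kk$ reduces to the automatic injectivity of modules over the semisimple algebra $H_t$.

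Part (c) is the core. For the forward direction I would begin with a Frobenius form on $H$ over $\kk$, that is, a functional whose kernel contains no nonzero one-sided ideal, and transport it through a $\Psi$-type map to manufacture a non-degenerate left integral, using the bijectivity criteria recorded in Remark~\ref{rem:nondeg}. For the converse, given a non-degenerate left integral $\Lambda$, the map $\Psi_\Lambda$ is by definition bijective, and I would use it to build an associative, non-degenerate bilinear form on $H$, and hence the comultiplication and counit of Definition-Theorem~\ref{defthm:Frob}. For part (d), assuming $\Psi_\Lambda(\lambda) = 1_H$, I would verify directly that $\lambda$ is a left integral of the dual weak Hopf algebra $H^*$ and is non-degenerate; the essential tool is that $(\Psi_\Lambda)^*$ is a left $H$-module map, as shown in Remark~\ref{rem:nondeg}(b), which converts the equation $\Psi_\Lambda(\lambda) = 1_H$ into the integral identity for $\lambda$ and transfers non-degeneracy from $\Lambda$ to $\lambda$ via the bijectivity of $\Phi'_\Lambda$.

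The hard part will be part (c) together with the non-degeneracy bookkeeping in part (d). Because $\Delta_{\textnormal{wk}}(1_H) \neq 1_H \otimes 1_H$ in general, the associativity and non-degeneracy of the Frobenius form no longer follow from the naive Hopf-algebra manipulations; one must carefully track the source and target maps $\varepsilon_s, \varepsilon_t$ and invoke the weak axioms~\ref{def:wba4} and~\ref{def:wba5} together with the separability of $H_t$ throughout. Verifying that the transported functional is genuinely non-degenerate, rather than merely an integral, is where the real content lies.
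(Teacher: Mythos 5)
First, a point of reference: this paper never proves Theorem~\ref{thm:BNS} at all. The statement is imported from \cite{BNS} and stated with a tombstone and citation in place of a proof; the only fragments the paper re-derives internally are the ``if'' direction of part (c) and part (d), which reappear inside the proof of Theorem~\ref{thm:main-weak}(b) via the maps $\Phi_\Lambda$, $\Phi'_\Lambda$ of Remark~\ref{rem:nondeg}. So your attempt can only be measured against the arguments of \cite{BNS} and against internal consistency with the rest of the paper.

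On that score there is a genuine gap, and it sits in your part (a). You propose to realize $H$ as a Frobenius extension of $H_t$, with the Frobenius system ``assembled from a nonzero left integral together with $\varepsilon_t$,'' and then push injectivity up from the semisimple base. This proves too much. Frobenius extensions compose (Nakayama--Tsuzuku transitivity), and the separable algebra $H_t$ is semisimple, hence itself a Frobenius $\kk$-algebra; so if $H/H_t$ were a Frobenius extension, then $H/\kk$ would be one as well, i.e.\ \emph{every} finite-dimensional weak Hopf algebra would be a Frobenius algebra. By part (c), every such $H$ would then possess a non-degenerate left integral --- which is false, and the paper itself points to the counterexamples (\cite{IK}, \cite[Section~9.2]{HH}); their existence is what makes Theorem~\ref{thm:main-weak} non-trivial. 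The precise place your construction breaks is the dual-basis condition for the would-be Frobenius system: a merely nonzero integral $\Lambda$ (which is all your part (b) supplies) does not satisfy it, because non-degeneracy of $\Lambda$ is exactly what the dual-basis identities encode. In other words, ``$H$ is a Frobenius extension of $H_t$'' is essentially equivalent to the conclusion of part (c), not a stepping stone toward part (a).

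What is true, and what carries the actual proof, is the weaker relative statement that never produces dual bases: by the Larson--Sweedler/Hopf-module theorem in the weak setting --- the very argument you sketch for (b) --- the dual $H^*$ is isomorphic, as an $H$-module, to the space of integrals $I^L(H^*)$ induced up over the counital base subalgebra. Separability of that base makes $I^L(H^*)$ projective over it, hence $H^*$ is a projective $H$-module, and dualizing shows the regular modules are injective. This one argument delivers (a) and (b) simultaneously, which is also why your ordering ``(b) first, then (a) via a Frobenius system'' does not reflect how the proof can actually be organized. Your sketches for (b), (d), and the converse direction of (c) are consistent in spirit with \cite{BNS}; the forward direction of (c) --- manufacturing a non-degenerate integral from a Frobenius form by ``transport through a $\Psi$-type map'' --- is left as an unexecuted intention, and it too comes out of the module-theoretic setup above rather than any formal transport.
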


\subsection{Main result} \label{sec:weak-main} Now we present our main result on the non-counital Frobenius structure of finite-dimensional weak Hopf algebras.

\begin{theorem} \label{thm:main-weak} Let $H$ be a weak Hopf algebra. Then the following statements hold.
\begin{enumerate}[font=\upshape]
\item $H$ is non-counital Frobenius with a nonzero comultiplication map $\Delta$. \smallskip
\item  $\Delta$ is counital if and only if $H$ is Frobenius (e.g., if and only if $H$ has a non-degenerate left integral). In this case, the counit is a nondegenerate  left integral  of~$H^*$.
\end{enumerate}
\end{theorem}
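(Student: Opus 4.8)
The plan is to reduce everything to Lemma~\ref{lem:coassoc}: it suffices to exhibit a single Casimir element $C = \sum_i a_i \otimes b_i \in H \otimes H$ satisfying $\sum_i a_i \otimes b_i x = \sum_i x a_i \otimes b_i$ for all $x \in H$ (condition \eqref{eq:Delta1}), and then set $\Delta(x) := \sum_i a_i \otimes b_i\, x$. By Theorem~\ref{thm:BNS}(2) a weak Hopf algebra always has a nonzero left integral $\Lambda$, and the candidate suggested by Remark~\ref{rem:nondeg} is
$$ C := \Lambda_1 \otimes S(\Lambda_2), \qquad \Delta(x) = \Lambda_1 \otimes S(\Lambda_2)\, x. $$
Here $\Delta(1_H) = C$, and, crucially, $(\id \otimes \phi)(C) = \Phi'_\Lambda(\phi)$ and $(\phi \otimes \id)(C) = \Phi_\Lambda(\phi)$ for the maps of Remark~\ref{rem:nondeg}; this is what will couple counitality to non-degeneracy in part (b).

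The main obstacle is verifying the Casimir identity $\Lambda_1 \otimes S(\Lambda_2)\,x = x\Lambda_1 \otimes S(\Lambda_2)$ for all $x$. I would start from the defining relation $x\Lambda = \varepsilon_t(x)\Lambda$ of a left integral, apply $\Delta_{\textnormal{wk}}$, and then apply $\id \otimes S$, using that $S$ is anti-multiplicative and anti-comultiplicative to transport factors across the tensor. The genuinely weak feature here is that $\Delta_{\textnormal{wk}}(1_H) \neq 1_H \otimes 1_H$, so the manipulations must carry the idempotent $\Delta_{\textnormal{wk}}(1_H)$ and the counital maps $\varepsilon_s,\varepsilon_t$ along, and I would lean on the standard BNS integral identities together with the antipode axioms of Definition~\ref{def:weak} and the interaction of $S$ with $\varepsilon_s,\varepsilon_t$ to reduce both sides to a common expression. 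Once \eqref{eq:Delta1} is checked, Lemma~\ref{lem:coassoc} immediately gives that $(H,m,u,\Delta)$ is non-counital Frobenius, proving~(a).

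For the forward direction of (b), suppose $\Delta$ has a counit $\varepsilon$. The right counit identity $(\id \otimes \varepsilon)\Delta(x) = x$ reads $\Lambda_1\,\varepsilon(S(\Lambda_2)x) = x$ for all $x$. Writing $\gamma\colon H \to H^*$, $\gamma(x)(h) = \varepsilon(hx)$, this says $\Phi'_\Lambda(\gamma(x)) = x$, so $\Phi'_\Lambda$ is surjective, hence bijective by a dimension count. By Remark~\ref{rem:nondeg} this forces $\Psi_\Lambda$ to be bijective, i.e.\ $\Lambda$ is non-degenerate, whence $H$ is Frobenius by Theorem~\ref{thm:BNS}(3).

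For the backward direction, if $H$ is Frobenius I would choose a non-degenerate left integral $\Lambda$ (Theorem~\ref{thm:BNS}(3)); then $\Psi_\Lambda$, and hence $\Phi_\Lambda$ and $\Phi'_\Lambda$, are bijective by Remark~\ref{rem:nondeg}. I would take $\varepsilon$ to be the non-degenerate left integral $\lambda$ of $H^*$ furnished by Theorem~\ref{thm:BNS}(4), and verify directly, using the integral identities for $\Lambda$ and $\lambda$ and the antipode axioms, that it satisfies both counit identities $(\varepsilon \otimes \id)\Delta = \id = (\id \otimes \varepsilon)\Delta$ for all $x$ (not merely their $x = 1_H$ normalizations $\Phi_\Lambda(\varepsilon) = 1_H = \Phi'_\Lambda(\varepsilon)$). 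The delicate point I expect to be the crux is that these two one-sided normalizations a priori involve the two different maps $\Phi_\Lambda$ and $\Phi'_\Lambda$, yet must be met simultaneously by a single functional: reconciling the $S$-twist implicit in $\Phi'_\Lambda$ with the left-integral condition, and upgrading the normalizations to identities for all $x$, is where the weak Hopf integral identities do the real work. Once this is established, the final clause of (b) — that the counit is a non-degenerate left integral of $H^*$ — is immediate from Theorem~\ref{thm:BNS}(4).
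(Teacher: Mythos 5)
Your part (a) and the forward implication of (b) are essentially correct, and (a) coincides with the paper's own proof: the paper also sets $\Delta(h) = \Lambda_1 \otimes S(\Lambda_2)h$, verifies \eqref{eq:Delta1} from the BNS identity $\Lambda_1 \otimes x\Lambda_2 = S(x)\Lambda_1 \otimes \Lambda_2$ (apply $\id \otimes S$ and use bijectivity of $S$), and invokes Lemma~\ref{lem:coassoc}. Your forward direction of (b) --- a counit gives $\Phi'_\Lambda \circ \gamma = \id_H$, so $\Phi'_\Lambda$ is surjective, hence bijective, hence $\Lambda$ is non-degenerate and $H$ is Frobenius by Theorem~\ref{thm:BNS}(c) --- is a correct and slightly more direct variant of the paper's argument, which instead passes through solvability of $\Phi'_\Lambda(\lambda)=1_H$ and Remark~\ref{rem:nondeg}(b).

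The gap is in your backward direction. The functional furnished by Theorem~\ref{thm:BNS}(d) is normalized by $\Psi_\Lambda(\lambda) = \Lambda_1\,\lambda(\Lambda_2) = 1_H$, whereas counitality of $\Delta_\Lambda$ is equivalent to the two conditions $\Phi_\Lambda(\varepsilon) = \varepsilon(\Lambda_1)S(\Lambda_2) = 1_H$ and $\Phi'_\Lambda(\varepsilon) = \Lambda_1\,\varepsilon(S(\Lambda_2)) = 1_H$; these differ from $\Psi_\Lambda(\varepsilon)=1_H$ by an antipode twist, since $\Phi'_\Lambda(\varepsilon) = \Psi_\Lambda(\varepsilon \circ S)$. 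So the counit is $\lambda \circ S^{-1}$, not $\lambda$, and this is not cosmetic: take $H$ to be Sweedler's $4$-dimensional Hopf algebra (a weak Hopf algebra) with $\Lambda = x + gx$, so that $\Delta_{\textnormal{wk}}(\Lambda) = x\otimes 1 + g \otimes x + gx \otimes g + 1 \otimes gx$. The unique solution of $\Psi_\Lambda(\lambda) = 1_H$ is $\lambda = (gx)^*$, and one computes $\Phi_\Lambda(\lambda) = g$ and $\Phi'_\Lambda(\lambda) = -g$, so your proposed $\varepsilon = \lambda$ fails both counit identities; the actual counit is $x^* = \lambda \circ S^{-1}$. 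Moreover, even after correcting the choice --- take $\varepsilon$ to be the unique solution of $\Phi'_\Lambda(\varepsilon) = 1_H$, available since $\Phi'_\Lambda$ is bijective by Remark~\ref{rem:nondeg}(a) --- the step you yourself flag as the crux, namely that this single functional also satisfies the other normalization $\Phi_\Lambda(\varepsilon) = 1_H$, is left without an argument. That is exactly the nontrivial computation in the paper: it shows $\lambda\bigl(\Phi_\Lambda(\lambda)\,h\bigr) = \lambda(1_H\,h)$ for all $h$ using \eqref{eq:Lambda-S}, and then concludes $\Phi_\Lambda(\lambda) = 1_H$ from faithfulness of the right regular action of $H$ on $H^*$. (By contrast, the ``upgrade to all $x$'' that worries you is immediate: once the normalizations hold, $(\varepsilon \otimes \id)\Delta(x) = \Phi_\Lambda(\varepsilon)\,x$ and, by \eqref{eq:Lambda-S}, $(\id \otimes \varepsilon)\Delta(x) = x\,\Phi'_\Lambda(\varepsilon)$.) Finally, with the corrected counit your last clause also needs care: Theorem~\ref{thm:BNS}(d) applies verbatim to $\varepsilon \circ S$, not to $\varepsilon$ itself.
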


\begin{proof}
(a) By Theorem~\ref{thm:BNS}(b), there exists a non-zero left integral $\Lambda$ of $H$. Moreover, by \cite[Lemma~3.2(a,b)]{BNS}, we have that $\Lambda_1 \otimes x \Lambda_2 = S(x) \Lambda_1 \otimes \Lambda_2$ for all $x \in H$. Apply $\id \otimes S$ to get that $\Lambda_1 \otimes S(\Lambda_2)S(x) = S(x) \Lambda_1 \otimes S(\Lambda_2)$. So, for all $h \in H$, we have that
\begin{equation} \label{eq:Lambda-S}
\Lambda_1 \otimes S(\Lambda_2)h \;=\; h \Lambda_1 \otimes S(\Lambda_2).
\end{equation}
Now by taking, 
\begin{equation} \label{eq:Delta-weak}
    \Delta(h) := \Lambda_1 \otimes S(\Lambda_2)h,
\end{equation}
for all $h \in H$, this part of the theorem holds by Lemma~\ref{lem:coassoc}.

\smallskip

(b)  Take the comultiplication map $\Delta = \Delta_\Lambda$, for $\Lambda \in I^L(H)$, as in \eqref{eq:Delta-weak}. For an element $\lambda \in H^*$, define \begin{equation} \label{eq:ep-weak}
    \varepsilon = \varepsilon_\lambda: H \to \kk, \quad h \mapsto \lambda(h).
\end{equation}
Now, $\Delta$ is counital via $\varepsilon$ if and only if 
\begin{align}
    (\varepsilon \otimes \id)\Delta(h) \; = \; \lambda(\Lambda_1)\;S(\Lambda_2) \; h \; = \; h  \;\;\; \forall h\; \in H,  \label{eq:ep-wk1} \\
    (\id \otimes \varepsilon)\Delta(h) \; \overset{\textnormal{\eqref{eq:Lambda-S}}}{=} \; h\;\Lambda_1 \; \lambda(S(\Lambda_2)) \; =\; h \;\;\; \forall h\; \in H .\label{eq:ep-wk2}
\end{align} 

%We claim that both \eqref{eq:ep-wk1} and \eqref{eq:ep-wk2} hold  for some element $\lambda \in H^*$ if and only if the left integral $\Lambda$ of $H$ is non-degenerate. 
Recall Remark~\ref{rem:nondeg}(a) for the definitions of the maps $\Phi_{\Lambda}, \Phi'_{\Lambda}: H^* \to H$. Then, 
$$\eqref{eq:ep-wk1} \;  \text{holds} \; \iff  \Phi_{\Lambda}(\lambda) = 1_H
\quad \quad \quad \text{and} \quad \quad \quad 
\eqref{eq:ep-wk2} \;  \text{holds} \; \iff  \Phi'_{\Lambda}(\lambda) = 1_H.$$
Therefore, $\varepsilon_\lambda$ is a counit for $\Delta_\Lambda$ if and only if there is a unique solution $\lambda$ to the equations $\Phi_\Lambda(\lambda) =1_H$ and $\Phi'_\Lambda(\lambda) =1_H$. 
%\footnote{\cw{[This new argument was inspired by Section 4.4 of Heckenberger-Schneider's book: please keep link to yourself \url{https://www.dropbox.com/s/01bc0lo8n63a6az/Heckenberger-Schneider-RootSystems.pdf?dl=0}]}}
But if $\Phi'_\Lambda(\lambda) =1_H$, then $\Phi_\Lambda(\lambda) = 1_H$. Indeed, for all $h \in H$:
\begin{align*}
\lambda\Big(\Phi_\Lambda(\lambda)\;h\Big) &= \lambda\Big(\lambda(\Lambda_1)S(\Lambda_2)h\Big)  = \lambda(\Lambda_1)\lambda\Big(S(\Lambda_2)h\Big) = \lambda\Big(\Lambda_1 \lambda\big(S(\Lambda_2)h\big)\Big)\\ &\overset{\textnormal{\eqref{eq:Lambda-S}}}{=} \lambda\Big(h\Lambda_1 \lambda\big(S(\Lambda_2)\big)\Big)
 = \lambda\Big(h\;\Phi'_\Lambda(\lambda)\Big) = \lambda\Big(1_H \; h\Big).
 \end{align*}
Considering the right regular action $\triangleleft$ of $H$ on $H^*$, we then get that $\lambda \triangleleft \Phi_\Lambda(\lambda) = \lambda \triangleleft 1_H$. Thus,  $\Phi_\Lambda(\lambda) = 1_H$ since the action $\triangleleft$ is faithful. Now by Remark~\ref{rem:nondeg}(b), $\varepsilon_\lambda$ is a counit if and only if the left integral $\Lambda$ is non-degenerate. The last statement follows from Remark~\ref{rem:nondeg}(b) and  Theorem~\ref{thm:BNS}(d).
\end{proof}

%%%%%%%%%%%%%%%%%%%%%%%%%%%%%%%%%%%%%%%%%%%%%%%%%%%%%%%%%%%%%%%%%

\section{Examples for finite dimensional weak Hopf algebras }\label{sec:weak-ex}

In this part, we provide examples of the main result of Section~\ref{sec:weak}, Theorem~\ref{thm:main-weak}, on the non-counital Frobenius condition for finite-dimensional weak Hopf algebras. In Section~\ref{sec:groupoid}, we illustrate how groupoid algebras are (counital) Frobenius. Moreover, in Section~\ref{sec:QTG}, we show that certain weak Hopf algebras, called  quantum transformation groupoids, are (counital) Frobenius.  In both cases, we construct an explicit non-degenerate left integral of the weak Hopf algebra $H$ under investigation, and derive formulas for the comultiplication and counit that makes $H$ (counital) Frobenius.

\subsection{Groupoid algebras} \label{sec:groupoid}
Take $\mathcal{G}$ to be a {\it finite groupoid}, that is,  a category with finitely many objects $\mathcal{G}_0$, and finitely many morphisms $\mathcal{G}_1$ which are all isomorphisms. For $g \in \mathcal{G}_1$, let $s(g)$ and $t(g)$ denote the source and target of $g$, respectively.

%Let $G$ be a finite category (finitely many objects) such that all morphism are invertible. Then $G$ is called a groupoid. 
\begin{definition} Given a finite groupoid $\mathcal{G}$, a {\it groupoid algebra} $\kk \mathcal{G}$ is a finite-dimensional weak Hopf algebra, which is spanned by $g \in \mathcal{G}_1$ as a $\kk$-vector space, with product $g h$ being the composition $g\circ h$ if $g$ and $h$ are composable and $0$ otherwise, and with unit $\sum_{X \in \mathcal{G}_0} \id_X$. Moreover, for $g \in \mathcal{G}_1$, we have that  $\Delta_{\textnormal{wk}}(g)=g\otimes g$, $\varepsilon_{\textnormal{wk}}(g)=1_\kk$, and $S(g)=g^{-1}$.
\end{definition}
%Using $G$, we can form a weak Hopf algebra, denoted $\kk G$, called as the groupoid algebra. As a vector space, $\kk G$ is sapnned by the morphisms $g\in G$. For two morphisms $g,h\in G$, the product $g\cdot h$ is equal to composition $g\circ h$ if $g$ and $h$ are composable and $0$ otherwise. If there are $n$ objects in $G$, then the unit is given by $1_{\kk G}=\sum_{i=1}^n e_i$ where $e_i$ denotes the identity morphism on the $i^{th}$ object. The weak Hopf algebra structure is given by
%\[ \Delta_{\textnormal{wk}}(g)=g\otimes g, \hspace{1cm} \varepsilon_{\textnormal{wk}}(g)=1, \hspace{1cm} S(g)=g^{-1}  \]
%for all $g\in G$. 
Now consider the next result.

\begin{proposition}
The groupoid algebra $\kk \mathcal{G}$ is Frobenius.
\end{proposition}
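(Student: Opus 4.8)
The plan is to apply Theorem~\ref{thm:BNS}(c): since $\kk\mathcal{G}$ is a finite-dimensional weak Hopf algebra, it is Frobenius exactly when it admits a non-degenerate left integral, so it suffices to exhibit one explicitly. The natural candidate is the sum of all morphisms, $\Lambda := \sum_{g \in \mathcal{G}_1} g$, and I would verify both that it is a left integral and that it is non-degenerate.

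First I would compute the target counital map $\varepsilon_t$ on basis elements. Since $1_{\kk\mathcal{G}} = \sum_{X \in \mathcal{G}_0}\id_X$, we have $\Delta_{\textnormal{wk}}(1_{\kk\mathcal{G}}) = \sum_{X}\id_X \otimes \id_X$, so for $g \in \mathcal{G}_1$, Definition~\ref{def:eps} gives $\varepsilon_t(g) = \sum_{X}\varepsilon_{\textnormal{wk}}(\id_X\, g)\,\id_X = \id_{t(g)}$; indeed $\id_X\, g$ is nonzero (and equal to $g$, on which $\varepsilon_{\textnormal{wk}}$ takes the value $1_\kk$) for exactly the object $X = t(g)$. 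Next I would verify the left-integral condition $h\Lambda = \varepsilon_t(h)\Lambda$ for each $h \in \mathcal{G}_1$, the general case following by $\kk$-linearity. The key point is that left-multiplication by the isomorphism $h$ restricts to a bijection from $\{g \in \mathcal{G}_1 : g \text{ composable with } h\}$ onto $\{k \in \mathcal{G}_1 : t(k) = t(h)\}$, with inverse $k \mapsto h^{-1}k$; summing yields $h\Lambda = \sum_{t(k)=t(h)} k$. On the other side, $\varepsilon_t(h)\Lambda = \id_{t(h)}\Lambda = \sum_{t(g)=t(h)} g$, which is the same sum, so $\Lambda$ is a left integral.

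Then I would establish non-degeneracy of $\Lambda$ via the map $\Psi_\Lambda$ of \eqref{eq:psi-bij}. Because $\Delta_{\textnormal{wk}}(\Lambda) = \sum_{g} g \otimes g$, we have $\Lambda_1 \otimes \Lambda_2 = \sum_{g} g \otimes g$, so $\Psi_\Lambda(\phi) = \sum_{g} g\,\phi(g)$ for $\phi \in H^*$. Evaluating on the dual basis $\{g^*\}$ of $H^* = (\kk\mathcal{G})^*$ gives $\Psi_\Lambda(g^*) = \sum_{k} k\, g^*(k) = g$, so $\Psi_\Lambda$ carries the dual basis bijectively onto the basis $\mathcal{G}_1$ of $\kk\mathcal{G}$; hence $\Psi_\Lambda$ is a bijection and $\Lambda$ is non-degenerate. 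By Theorem~\ref{thm:BNS}(c), $\kk\mathcal{G}$ is Frobenius, and Theorem~\ref{thm:main-weak} moreover identifies the resulting comultiplication $\Delta(h) = \Lambda_1 \otimes S(\Lambda_2)h = \sum_g g \otimes g^{-1}h$ and its counit.

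The main obstacle is purely bookkeeping with the source/target and composition conventions: one must pin down the direction of the composition $gh = g\circ h$ in order to get $\varepsilon_t(g) = \id_{t(g)}$ and the orientation of the left-multiplication bijection mutually consistent. The invertibility of every morphism in $\mathcal{G}$ (making left multiplication a permutation of the relevant morphism sets) is what drives both the integral identity and the non-degeneracy, so once the conventions are fixed each individual step is a short verification.
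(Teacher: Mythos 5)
Your proposal is correct, and it follows the paper's overall strategy: both proofs take $\Lambda=\sum_{g\in\mathcal{G}_1}g$ as the candidate and conclude via Theorem~\ref{thm:BNS}(c). The two verifications, however, are carried out differently. For the integral property, the paper simply quotes the description of $I^L(\kk\mathcal{G})$ from \cite[Example~3.1.2]{nikshych2002finite}, whereas you prove $h\Lambda=\varepsilon_t(h)\Lambda$ directly from the fact that left multiplication by $h$ permutes the relevant morphism sets; your version is more self-contained. For non-degeneracy, you show outright that $\Psi_\Lambda$ of \eqref{eq:psi-bij} is a bijection by computing $\Psi_\Lambda(g^*)=g$ on the dual basis, while the paper instead exhibits the single functional $\lambda$ (taking value $1_\kk$ on identities and $0$ elsewhere), checks the one identity $\lambda(\Lambda_1)\Lambda_2=1_{\kk\mathcal{G}}$, and invokes Remark~\ref{rem:nondeg}(b), by which solvability of that equation already forces bijectivity. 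Both routes are short and valid, but the paper's choice has an extra payoff: by Theorem~\ref{thm:BNS}(d) the same $\lambda$ is a non-degenerate integral of $(\kk\mathcal{G})^*$, and it is precisely the counit $\varepsilon=\lambda$ of the Frobenius structure obtained from Theorem~\ref{thm:main-weak} and \eqref{eq:ep-weak}. Your dual-basis argument proves bijectivity without producing the counit, though it can be read off afterwards as $\Psi_\Lambda^{-1}(1_{\kk\mathcal{G}})=\sum_{X\in\mathcal{G}_0}(\id_X)^*$, which is exactly the paper's $\lambda$.
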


\begin{proof}
We will show that $\kk \mathcal{G}$ has a non-degenerate left integral. From \cite[Example~3.1.2]{nikshych2002finite}, recall that
    $I^L(\kk \mathcal{G}) = $ span$\{\textstyle \sum_{h \in \mathcal{G}_1 : t(h) = X} h\}_{X \in \mathcal{G}_0}.$ Consider $$\Lambda = \textstyle \sum_{h\in \mathcal{G}_1}h \; \; \in I^L(\kk \mathcal{G}),$$
      and the linear map 
      \begin{center}
          $\lambda:\kk \mathcal{G} \rightarrow \kk$, \;\;  where $\lambda(g)=1_\kk $ if $g=\id_X$ for some $X \in \mathcal{G}_0$, \; \; and $\lambda(g) = 0$ otherwise.
      \end{center} 
      Then, observe that $\lambda (\Lambda_1)  \Lambda_2=1_{\kk \mathcal{G}}$.
Hence, $\Lambda$ is non-degenerate, and hence $\lambda$ is a non-degenerate integral of $(\kk \mathcal{G})^*$ by Theorem~\ref{thm:BNS}(d). Now by Theorem~\ref{thm:BNS}(c), $\kk \mathcal{G}$ is Frobenius.

Moreover, we can use the non-degenerate integrals above, along with Theorem~\ref{thm:main-weak}, to see that $\kk \mathcal{G}$ is Frobenius via Definition-Theorem~\ref{defthm:Frob}. Here, the comultiplication $\Delta = \Delta_\Lambda$ is given by \eqref{eq:Delta-weak}:
$$\Delta(g)= \textstyle \sum_{h\in \mathcal{G}_1} h\otimes (h^{-1} g)$$ for $g \in \mathcal{G}_1$,
and with counit $\varepsilon = \lambda$ as in \eqref{eq:ep-weak}.
%Now, using Theorem~\ref{thm:main-weak} we get the $\kk G$ is counital Frobenius algebra with comultiplication $\Delta$ and counit $\lambda$. 
\end{proof}

\subsection{Quantum transformation groupoids}
\label{sec:QTG} Here, we discuss the Frobenius condition for certain weak Hopf algebras, called quantum transformation groupoids, which are constructed using the data of a Hopf algebra $L$ and a strongly separable module algebra $B$ over $L$.

\begin{notation} Consider the following notation.
\begin{itemize}
    \item Take $L$ to be finite-dimensional Hopf algebra over $\kk$, with comultiplication $\Delta_L$, counit $\varepsilon_L$, and antipode $S_L$ [Definition-Proposition~\ref{def:Hopf}].
    
    \smallskip
     
   % \item $\lambda,\Lambda$: by virtue of Theorem~\ref{thm:Radfordintegral}, there exists a nonzero left integral $\Lambda$ and a nonzero right cointegral $\lambda$ satisfying $\lambda(\Lambda)=1$.
    
    \smallskip
    
    \item Let $B$ be a strongly separable algebra over $\kk$, which implies that it comes equipped with an element  $e^1\otimes e^2 \in B \otimes B$ satisfying
    \begin{align}
        be^1\otimes e^2 & = e^1\otimes e^2b \quad \quad  \forall b \in B, \label{eq:idempotent1} \\
        e^1e^2 & = 1_B, \label{eq:idempotent2} \\
        e^1\otimes e^2 & = e^2\otimes e^1. \label{eq:idempotent3}
    \end{align}
    Here, $e^1 \otimes e^2$ is called a {\it symmetric separability idempotent}. 
    
\smallskip

    \item Furthermore, there exists a non-degenerate trace form $\omega: B\rightarrow \kk$ defined by 
    \begin{align}
        \omega(e^1)e^2 = 1_B = e^1\omega(e^2) \label{eq:trace}.
    \end{align}
    
    \smallskip
    
    \item We further impose that $B$ is a {\it right $L$-module algebra} via $\triangleleft$, that is, we have a map $\triangleleft: B\otimes L\rightarrow B$ satisfying 
    \begin{align}
        (b\triangleleft \ell)\triangleleft \ell' = b\triangleleft (\ell \ell'), & \hspace{1cm}
        b\triangleleft 1_L=b, \label{eq:QTGaction1}\\
        (bb')\triangleleft \ell = (b\triangleleft \ell_1)(b'\triangleleft \ell_2), & \hspace{1cm}
        1_B\triangleleft \ell=\varepsilon_L(\ell)1_B
        \label{eq:QTGaction2}
    \end{align}
    for all $b \in B$ and $\ell, \ell' \in L$. 
    
     \smallskip
     
    \item Moreover, we assume that the separability idempotent satisfies the identity below:
\begin{align}\label{eq:idempotentAction}
       (e^1\triangleleft \ell) \otimes e^2 = e^1\otimes (e^2\triangleleft S_L(\ell))
\end{align}
for all $\ell \in L$
\end{itemize}
\end{notation}

\begin{definition}
Recall the notation above. A {\it quantum transformation groupoid}   is a weak Hopf algebra over $\kk$, which as a $\kk$-vector space is $$H:= H(L,B)=B^{op}\otimes L\otimes B,$$ with the following structure maps:
\begin{align}
\text{multiplication: } &   (a\otimes \ell \otimes b)(a'\otimes \ell'\otimes b')  = (a'\triangleleft S_L(\ell_1))a \otimes \ell_2 \ell'_1 \otimes (b\triangleleft \ell'_2)b'; \label{eq:QTGmult} 
\\
\text{unit: } &  1_B \otimes 1_L \otimes 1_B; \label{eq:QTGunit} 
\\
\text{comultiplication: } & \Delta_{\text{wk}}(a\otimes \ell\otimes b)  = (a\otimes \ell_1 \otimes e^1) \otimes ((e^2 \triangleleft S_L(\ell_2)) \otimes \ell_3 \otimes b);
\label{eq:QTGcomult} 
\\
\text{counit: } &  \varepsilon_{\text{wk}}(a\otimes \ell\otimes b) = \omega(a(b \triangleleft S_L^{-1}(\ell))); \label{eq:QTGcounit} 
\\
\text{antipode: } & S_{\text{wk}}(a\otimes \ell\otimes b)  = b\otimes S_L(\ell)\otimes a. \label{eq:QTGantipode}
\end{align}
\end{definition}

We refer the reader to \cite[Section~7]{WWW} and references therein for more details about the structure of the weak Hopf algebras $H(L,B)$.

\smallskip 

Next, recall some facts about integrals of finite-dimensional Hopf algebras. 

\begin{definition} \cite[Definition~10.1.1]{radford} Recall the notation above.
A {\it left} (resp., {\it right}) {\it integral} of $L$ is an element $\Lambda \in L$ such that $\ell \Lambda = \varepsilon_L(\ell) \Lambda$ (resp., $\Lambda \ell = \varepsilon_L(\ell) \Lambda$) for all $\ell \in L$.
\end{definition}

The notion above is consistent with Definition~\ref{def:integral} via Definition-Proposition~\ref{def:Hopf}. Moreover, we have the following facts.

\begin{proposition} \cite[Proposition 10.1.3(b), Section~10.2]{radford} \label{prop:L-integ}
 Recall the notation above. 
\begin{enumerate} [font=\upshape]
\item Non-zero left (or, right) integrals of finite-dimensional Hopf algebras $L$ exist and are always non-degenerate (and thus, finite-dimensional Hopf algebras are always Frobenius). 
\smallskip
%\item If $\Lambda$ is a left integral of $L$, then there exists an element $\lambda$ of $L^*$ so that 
%$$\lambda(\Lambda) = \lambda(S_L(\Lambda)) = 1_L.$$
    \item If $\Lambda$ is a right integral of $L$, then
\begin{align} \label{eq:rightintegL}
\ell S(\Lambda_1) \otimes \Lambda_2 =  S(\Lambda_1) \otimes \Lambda_2 \ell,
\end{align}
for all $\ell \in L$.
\end{enumerate}
\vspace{-.2in}

\qed
\end{proposition}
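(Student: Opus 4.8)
The plan is to treat the two parts separately, since they are of quite different character: part (a) is a structural existence-and-non-degeneracy statement, whereas part (b) is a Sweedler-calculus identity that I can derive by hand from the definition of a right integral.

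For part (a), I would invoke the Larson--Sweedler theory rather than reprove it. Existence of a nonzero left (or right) integral follows from the fundamental theorem of Hopf modules: viewing $L$ as a left Hopf module over itself, its space of coinvariants is one-dimensional and is canonically identified with the space of left integrals, so this space is nonzero (indeed one-dimensional). Non-degeneracy in the sense of Definition~\ref{def:integral}(3) then amounts to bijectivity of the associated map $H^*\to H$, which for a nonzero integral is precisely the assertion that $L$ is Frobenius with the integral as its Frobenius functional --- again a consequence of the fundamental theorem. Since all of this is recorded in \cite[Section~10.2]{radford}, I would cite it; the only thing to verify is that the word ``non-degenerate'' of Definition~\ref{def:integral}(3) agrees with the Hopf-algebraic one, which it does because $\Delta_{\textnormal{wk}}$ specializes to $\Delta_L$ when $L$ is an honest Hopf algebra.

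For part (b), the plan is a direct computation starting from $\Lambda\ell=\varepsilon_L(\ell)\Lambda$. First I would apply the comultiplication, which is an algebra map, to get
\[
\Lambda_1\ell_1\otimes\Lambda_2\ell_2 \;=\; \varepsilon_L(\ell)\,\Lambda_1\otimes\Lambda_2 \qquad \text{for all } \ell \in L;
\]
call this $(\ast)$. Applying $S\otimes\id$ and using that $S$ is anti-multiplicative turns $(\ast)$ into
\[
S(\ell_1)\,S(\Lambda_1)\otimes\Lambda_2\ell_2 \;=\; \varepsilon_L(\ell)\,S(\Lambda_1)\otimes\Lambda_2.
\]
The key step is then to feed the inner legs of a twofold coproduct $\Delta_L^{(2)}(\ell)=\ell_1\otimes\ell_2\otimes\ell_3$ into this identity (legitimate by coassociativity), keeping $\ell_1$ as a spectator, and then to collapse $\ell_1S(\ell_2)\otimes\ell_3=1\otimes\ell$ using the antipode and counit axioms. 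This produces $S(\Lambda_1)\otimes\Lambda_2\ell=\ell\,S(\Lambda_1)\otimes\Lambda_2$, which is exactly \eqref{eq:rightintegL}.

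The only real obstacle is the bookkeeping in part (b): one must be scrupulous about which copy of $\ell$ carries which Sweedler index and apply coassociativity and the counit axiom in the correct order, since a single misplaced index silently breaks the telescoping $\ell_1S(\ell_2)\otimes\ell_3=1\otimes\ell$. Notably, no use of $S^{-1}$ is required, so this argument is valid before invoking bijectivity of the antipode. Part (a), by contrast, carries no computational difficulty but rests on the nontrivial fundamental theorem of Hopf modules, which I would import as a black box.
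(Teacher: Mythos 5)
Your proposal is correct. Note that the paper itself offers no argument for this proposition: it is imported wholesale from Radford's book and stated with a tombstone, so for part (a) you are doing exactly what the paper does (citing the Larson--Sweedler/fundamental-theorem-of-Hopf-modules results recorded in \cite{radford}), and your added remark that the weak-Hopf notion of non-degeneracy in Definition~\ref{def:integral}(3) specializes to the usual Hopf-algebraic one when $\Delta_{\textnormal{wk}}=\Delta_L$ is the right compatibility check to flag. For part (b) you go beyond the paper by supplying a self-contained derivation, and it is sound: applying $\Delta_L$ (an algebra map) to $\Lambda\ell=\varepsilon_L(\ell)\Lambda$ and then $S\otimes\id$ gives $S(\ell_1)S(\Lambda_1)\otimes\Lambda_2\ell_2=\varepsilon_L(\ell)\,S(\Lambda_1)\otimes\Lambda_2$; feeding $\ell_2\otimes\ell_3$ into this identity with $\ell_1$ multiplied into the first leg yields $\ell_1S(\ell_2)S(\Lambda_1)\otimes\Lambda_2\ell_3=\ell\,S(\Lambda_1)\otimes\Lambda_2$ (the right side collapsing by the counit axiom), and since $\ell_1S(\ell_2)\otimes\ell_3=1_L\otimes\ell$ by the antipode and counit axioms, applying the fixed linear map $u\otimes v\mapsto u\,S(\Lambda_1)\otimes\Lambda_2v$ to both sides of that last identity turns the left side into $S(\Lambda_1)\otimes\Lambda_2\ell$, which is exactly \eqref{eq:rightintegL}. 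One presentational caution: the ``collapse'' step must be phrased as applying a linear map to both sides of an identity in $L\otimes L$ (as just described), not as a substitution performed inside a single Sweedler summand; your outline leaves this implicit, but the order of operations you propose is the correct one. Your observation that $S^{-1}$ is never needed is also accurate, and mildly valuable since bijectivity of the antipode is a theorem rather than an axiom.
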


\begin{proposition} \label{prop:QTG-nondeg}
Recall the notation above. If $\Lambda$ is a  right integral for the finite-dimensional Hopf algebra $L$, then 
\begin{equation} \label{eq:barLambda} \bar{\Lambda}:=(e^1\triangleleft \Lambda_1) \otimes S_L(\Lambda_2) \otimes e^2
\end{equation}
is a non-degenerate left integral of the quantum transformation groupoid $H(L,B)$.  
\end{proposition}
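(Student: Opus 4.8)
The plan is to verify the two assertions separately: that $\bar{\Lambda}$ is a left integral of $H = H(L,B)$ in the sense of Definition~\ref{def:integral}(1), and that it is non-degenerate in the sense of Definition~\ref{def:integral}(3). In both cases I would work on the spanning set $\{a\otimes\ell\otimes b\}$ of $H$ and reduce everything to identities internal to $L$ and $B$, recording at the outset that $\Delta_{\textnormal{wk}}(1_H)=(1_B\otimes 1_L\otimes e^1)\otimes(e^2\otimes 1_L\otimes 1_B)$, which follows from \eqref{eq:QTGcomult} using $\Delta_L(1_L)=1_L\otimes 1_L\otimes 1_L$, $S_L(1_L)=1_L$, and $e^2\triangleleft 1_L=e^2$.

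For the integral property I must show $(a\otimes\ell\otimes b)\,\bar{\Lambda}=\varepsilon_t(a\otimes\ell\otimes b)\,\bar{\Lambda}$ for all $a,b\in B$ and $\ell\in L$. On the left I apply the multiplication \eqref{eq:QTGmult} with $(a'\otimes\ell'\otimes b')=\bar{\Lambda}$, which couples $S_L(\ell_1)$ into the first $B$-factor via $(e^1\triangleleft\Lambda_1)\triangleleft S_L(\ell_1)=e^1\triangleleft(\Lambda_1 S_L(\ell_1))$ and splits the middle coproduct $S_L(\Lambda_2)$ across the other two factors, using the module-algebra axioms \eqref{eq:QTGaction1}--\eqref{eq:QTGaction2} to merge the successive $L$-actions. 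On the right I expand $\varepsilon_t(x)=\varepsilon_{\textnormal{wk}}(1_1 x)1_2$ with the $\Delta_{\textnormal{wk}}(1_H)$ recorded above and evaluate the counit \eqref{eq:QTGcounit}. The crux is to transfer the $\ell$-dependence across $\Lambda$: since $\Lambda$ is a \emph{right} integral of $L$, the relations $\Lambda\ell=\varepsilon_L(\ell)\Lambda$ and in particular part (b) of Proposition~\ref{prop:L-integ} let me collapse the $\ell$-factors against $\varepsilon_L$, while the separability identities \eqref{eq:idempotent1} and the action compatibility \eqref{eq:idempotentAction} are what make the $B$-components on the two sides agree.

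For non-degeneracy I would invoke Remark~\ref{rem:nondeg}(b): it suffices to exhibit a functional $\lambda\in H^*$ with $\Psi_{\bar{\Lambda}}(\lambda)=\bar{\Lambda}_1\,\lambda(\bar{\Lambda}_2)=1_H$ and to confirm it is the unique such solution, the uniqueness following from the left $H$-module map property of $\Psi_{\bar{\Lambda}}$ recorded there. This forces a computation of $\Delta_{\textnormal{wk}}(\bar{\Lambda})$ from \eqref{eq:QTGcomult}; since $\bar{\Lambda}$ already carries a coproduct of $\Lambda$, this produces the fourfold coproduct $\Lambda_1\otimes\Lambda_2\otimes\Lambda_3\otimes\Lambda_4$, a fresh copy of the separability idempotent, and a factor $S_L^2$ arising from the antipode occurring in both $\bar{\Lambda}$ and the comultiplication. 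The natural candidate for $\lambda$ pairs the non-degenerate trace form $\omega$ on the two $B$-factors with a suitable integral of $L^*$ on the middle factor — both available and non-degenerate by strong separability and by part (a) of Proposition~\ref{prop:L-integ} — and I would check $\Psi_{\bar{\Lambda}}(\lambda)=1_H$ by collapsing the $L$-coproducts against the integral of $L^*$ and then using \eqref{eq:trace} together with \eqref{eq:idempotent2} to reduce each $B$-component to $1_B$.

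I expect the non-degeneracy step to be the main obstacle. The left-integral verification is a fairly mechanical application of the right-integral relations \eqref{eq:rightintegL} and the idempotent axioms, whereas the non-degeneracy argument requires careful bookkeeping of the nested coproducts and the $S_L^2$ term, and above all the correct normalization of $\lambda$ so that the $B$- and $L$-contributions both trivialize simultaneously. Pinning down that normalization — and confirming uniqueness rather than mere existence of the solution to $\Psi_{\bar{\Lambda}}(\lambda)=1_H$ — is where the genuine difficulty lies.
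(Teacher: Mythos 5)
Your plan for the non-degeneracy half coincides with the paper's proof: reduce via Remark~\ref{rem:nondeg}(b) to exhibiting $\bar\lambda\in H^*$ with $\Psi_{\bar\Lambda}(\bar\lambda)=1_H$, take $\bar\lambda=\omega\otimes\lambda\otimes\omega$ where $\lambda\in L^*$ is normalized so that $\lambda(S_L(\Lambda_1))\,S_L(\Lambda_2)=1_L$ (possible since $S_L(\Lambda)$ is a non-degenerate left integral of $L$, by Proposition~\ref{prop:L-integ}(a)), and collapse the nested coproducts and the two copies of the separability idempotent using \eqref{eq:trace} and the counit/antipode axioms. Where you genuinely differ is the integral property. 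You propose to verify directly that $\bar\Lambda$ is a left integral, computing $\varepsilon_t$ from $\Delta_{\textnormal{wk}}(1_H)$; the paper instead first shows that $(e^1\triangleleft S_L(\Lambda_1))\otimes\Lambda_2\otimes e^2$ is a \emph{right} integral of $H(L,B)$ and then applies $S_{\textnormal{wk}}$, using \cite[Lemma~2.9]{BNS} (the antipode carries right integrals to left integrals) together with \eqref{eq:idempotent3} and \eqref{eq:idempotentAction} to land exactly on $\bar\Lambda$. The paper's detour is deliberate: the right-integral candidate has legs patterned as $S_L(\Lambda_1)\otimes\Lambda_2$, which is precisely the shape that \eqref{eq:rightintegL} and the formula for $\varepsilon_s$ (quoted from \cite[Lemma~7.9]{WWW}) act on.

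This exposes the one concrete soft spot in your sketch: \eqref{eq:rightintegL} does \emph{not} apply as stated to $\bar\Lambda$, whose legs follow the opposite pattern $\Lambda_1\otimes S_L(\Lambda_2)$. The variant you would actually need, obtained by applying $S_L^{-1}\otimes S_L$ to \eqref{eq:rightintegL}, reads $\Lambda_1\ell\otimes S_L(\Lambda_2)=\Lambda_1\otimes S_L^2(\ell)\,S_L(\Lambda_2)$, so $S_L^2$-twists enter your ``fairly mechanical'' verification, and you additionally need an explicit expression for $\varepsilon_t$ on $H(L,B)$, which the paper never derives (it only ever uses $\varepsilon_s$, via the cited lemma). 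None of this is fatal, but it is exactly the bookkeeping the paper's antipode trick avoids, so your left-integral step is harder than you estimate, not easier. Conversely, your closing worry is misplaced, and indeed contradicts what you correctly say mid-proposal: by the module-map argument of Remark~\ref{rem:nondeg}(b), once a solution to $\Psi_{\bar\Lambda}(\bar\lambda)=1_H$ exists, the image of the relevant map is a submodule of the regular module containing $1_H$, hence everything, and equality of finite dimensions forces bijectivity --- so uniqueness is automatic. Existence, i.e.\ the explicit computation $\Psi_{\bar\Lambda}(\omega\otimes\lambda\otimes\omega)=1_H$, is the only thing that must be checked, and that computation (not uniqueness) is where the real work lies; it is also the only part of the non-degeneracy argument the paper writes out.
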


\begin{proof}
First, we show that if $\Lambda$ is a right integral of $L$, then the element 
$$(e^1 \triangleleft S_L(\Lambda_1)) \otimes \Lambda_2 \otimes e^2 $$
is a right integral of $H(L,B)$. We compute:
{\small
\begin{align*}
((e^1 \triangleleft S_L(\Lambda_1)) \otimes \Lambda_2 \otimes e^2)(a' \otimes \ell' \otimes b') 
&\overset{\textnormal{(\ref{eq:QTGmult})}}{=} (a'\triangleleft S_L(\Lambda_2)) (e^1\triangleleft S_L(\Lambda_1)) \otimes \Lambda_3 \ell'_1 \otimes (e^2\triangleleft \ell'_2)b'  
 \medskip \\ 
  & \overset{\textnormal{(*)}}{=}  (a'\triangleleft S_L(\Lambda_1)_1) (e^{1}\triangleleft S_L(\Lambda_1)_2) \otimes \Lambda_2 \ell'_1 \otimes (e^2\triangleleft \ell'_2)b'
 \medskip \\
  & \overset{\textnormal{(\ref{eq:QTGaction2})}}{=}  (a'e^1 \triangleleft S_L(\Lambda_1))  \otimes \Lambda_2 \ell'_1 \otimes (e^2\triangleleft \ell'_2)b'
 \medskip \\
  & \overset{\textnormal{(\ref{eq:idempotent1})}}{=}  (e^1 \triangleleft S_L(\Lambda_1))  \otimes \Lambda_2 \ell'_1 \otimes ((e^2 a')\triangleleft \ell'_2)b'  
 \medskip \\
  & \overset{\textnormal{(\ref{eq:QTGaction2})}}{=}  (e^1 \triangleleft S_L(\Lambda_1))  \otimes \Lambda_2 \ell'_1 \otimes (e^2\triangleleft \ell'_2) (a'\triangleleft \ell'_3) b' 
 \medskip \\
  & \overset{\textnormal{\eqref{eq:rightintegL}}}{=}  (e^1 \triangleleft \ell'_1 S_L(\Lambda_1))  \otimes \Lambda_2 \otimes (e^2\triangleleft \ell'_2) (a\triangleleft \ell'_3) b' 
 \medskip \\
  & \overset{\textnormal{(\ref{eq:QTGaction1})}}{=}  ((e^1 \triangleleft \ell'_1)\triangleleft S_L(\Lambda_1))  \otimes \Lambda_2 \otimes (e^2\triangleleft \ell'_2) (a\triangleleft \ell'_3) b' 
 \medskip \\
  & \overset{\textnormal{(\ref{eq:idempotentAction})}}{=}  (e^1 \triangleleft S_L(\Lambda_1))  \otimes \Lambda_2 \otimes ((e^2 \triangleleft S_L(\ell'_1) ) \triangleleft \ell'_2) (a\triangleleft \ell'_3) b' 
 \medskip \\
  & \overset{ \textnormal{(\ref{eq:QTGaction1})}}{=}   (e^1 \triangleleft S_L(\Lambda_1))  \otimes \Lambda_2 \otimes (e^2 \triangleleft (S_L(\ell'_1) \ell'_2)) (a\triangleleft \ell'_3) b'
 \medskip \\
  & \overset{\textnormal{(**)}}{=}  (e^1 \triangleleft S_L(\Lambda_1))  \otimes \Lambda_2 \otimes (e^2 \triangleleft \varepsilon_L(\ell'_1)1_L) (a\triangleleft \ell'_2) b'
 \medskip \\
  & \overset{\textnormal{(\ref{eq:QTGaction1}),(\%)}}{=}  (e^1 \triangleleft S_L(\Lambda_1))  \otimes \Lambda_2 \otimes e^2 (a\triangleleft \ell') b' 
  \medskip \\
  & \overset{\textnormal{(\ref{eq:QTGmult})}}{=}   ((e^1 \triangleleft S_L(\Lambda_1)) \otimes \Lambda_2 \otimes e^2) (1_B \otimes 1_L \otimes (a'\triangleleft \ell')b')
 \medskip \\
  &\overset{\textnormal{($\star$)}}{=}   ((e^1 \triangleleft S_L(\Lambda_1)) \otimes \Lambda_2 \otimes e^2)\; \varepsilon_s(a' \otimes \ell' \otimes b')
\end{align*}
}

\noindent for $a',b'\in B$ and $\ell' \in L$. Here, $(*)$ is anti-comultiplicativity of the antipode,   $(**)$ is the antipode axiom, $(\%)$ is the counit axiom, and $(\star)$ follows from \cite[Lemma~7.9]{WWW}.

\smallskip

Next,  by \cite[Lemma~2.9]{BNS}, we have that
\begin{align*}
    \bar{\Lambda}:= \;S_{\text{wk}}((e^1 \triangleleft S_L(\Lambda_1)) \otimes \Lambda_2 \otimes e^2) &= e^2 \otimes S_L(\Lambda_2) \otimes (e^1 \triangleleft S_L(\Lambda_1)) \medskip \\
    &\overset{\textnormal{\eqref{eq:idempotent3}}}{=} e^1 \otimes S_L(\Lambda_2) \otimes (e^2 \triangleleft S_L(\Lambda_1)) \medskip \\
    &\overset{\textnormal{\eqref{eq:idempotentAction}}}{=} (e^1 \triangleleft \Lambda_1) \otimes S_L(\Lambda_2) \otimes e^2 
    \end{align*}
is a left integral of $H(L,B)$.

\smallskip

Finally, we will verify the non-degeneracy condition for the left integral $\bar{\Lambda}$ of $H(L,B)$. By Remark~\ref{rem:nondeg}(b),  it suffices to show that there exists an element $\bar{\lambda} \in H^*$ such that \begin{align} \label{eq:Phi-bar-lam}
    \Psi_{\bar{\Lambda}}(\bar{\lambda}) \; = \;\bar{\Lambda}_1 \; \bar{\lambda}(\bar{\Lambda}_2)\; = \;1_H.
\end{align}
Since, $S_L(\Lambda)$ is left integral of $L$,  we can choose an element $\lambda$ of $L$ so that 
\begin{align} \label{eq:lam-S-1L}
    S_L(\Lambda)_1\;\lambda(S_L(\Lambda)_2) = \lambda(S_L(\Lambda_1)) \;S_L(\Lambda_2) = 1_L
\end{align}
by Proposition~\ref{prop:L-integ}(a) and Remark~\ref{rem:nondeg}(b).  We claim that
\begin{align}\label{eq:barlambda}
    \bar{\lambda} = \omega \otimes \lambda \otimes \omega
\end{align}
is the desired element that makes \eqref{eq:Phi-bar-lam} hold. We take $e'^1 \otimes e'^2$ to be a copy of $e^1 \otimes e^2$ in the computations below:
{\small
\begin{align*}
        \bar{\Lambda}_1 \; \bar{\lambda}(\bar{\Lambda}_2)  
        &\overset{\textnormal{(\ref{eq:QTGcomult})}}{=}  
        [ (e^1\triangleleft\Lambda_1) \otimes S_L(\Lambda_2)_1 \otimes e'^1 ] \; \; \bar{\lambda}[(e'^2\triangleleft S_L(S_L(\Lambda_2)_2)) \otimes S_L(\Lambda_2)_3 \otimes e^2]
        \medskip \\
        &\overset{\textnormal{(\ref{eq:idempotentAction})}}{=} 
        [(e^1\triangleleft \Lambda_1) \otimes S_L(\Lambda_2)_1 \otimes (e'^1 \triangleleft S_L(\Lambda_2)_2) ] \; \; \bar{\lambda}[e'^2 \otimes S_L(\Lambda_2)_3 \otimes e^2]
        \medskip \\
        &
        \overset{\textnormal{(\ref{eq:barlambda})}}{=} 
        [ (e^1\triangleleft \Lambda_1) \otimes S_L(\Lambda_2)_1 \otimes (e'^1 \triangleleft S_L(\Lambda_2)_2) ] \; \; \omega(e'^2) \; \lambda(S_L(\Lambda_2)_3) \; \omega(e^2 )
        \medskip \\
        &
        =
        \lambda(S_L(\Lambda_2)_3)\;\;  [  ((e^1 \;\omega(e^2)) \triangleleft \Lambda_1) \otimes S_L(\Lambda_2)_1 \otimes ((e'^1 \; \omega(e'^2)) \triangleleft S_L(\Lambda_2)_2) ]    
        \medskip \\
        &
        \overset{\textnormal{(\ref{eq:trace})}}{=}  
        \lambda(S_L(\Lambda_2)_3)\;\;  [(1_B  \triangleleft \Lambda_1) \otimes S_L(\Lambda_2)_1 \otimes (1_B \triangleleft S_L(\Lambda_2)_2)]  
        \medskip \\
        &
        \overset{\textnormal{\eqref{eq:QTGaction2}}}{=} 
        \lambda(S_L(\Lambda_2)_3)\;\;  [  \varepsilon_L(\Lambda_1)1_B \otimes S_L(\Lambda_2)_1 \otimes \varepsilon_L(S_L(\Lambda_2)_2)1_B ] 
        \medskip \\
        &
        \overset{\textnormal{(*)}}{=}  
        \lambda(S_L(\Lambda_2))\;\;  [  \varepsilon_L(\Lambda_1) 1_B \otimes S_L(\Lambda_4) \otimes \varepsilon( S_L(\Lambda_3)) 1_B ]
        \medskip \\
        &
        \overset{\textnormal{(\%)}}{=} 
        \lambda(S_L(\Lambda_1)) \; [  1_B \otimes S_L(\Lambda_3) \otimes \varepsilon_L( S_L(\Lambda_2))1_B ]
        \medskip \\
         &
        \overset{\textnormal{$\varepsilon_L S_L \hspace{-.03in}=\hspace{-.03in} S_L$}}{=} 
        \lambda(S_L(\Lambda_1)) \; [  1_B \otimes S_L(\Lambda_3) \otimes \varepsilon_L(\Lambda_2)1_B ]
        \medskip \\
          &
        \overset{\textnormal{(\%)}}{=} 
         \;  1_B \otimes \lambda(S_L(\Lambda_1)) S_L(\Lambda_2) \otimes 1_B 
        \medskip \\
        &
        \overset{\textnormal{\eqref{eq:lam-S-1L}}}{=} 
        1_B \otimes 1_L \otimes 1_B \medskip \\
        &\overset{\textnormal{\eqref{eq:QTGunit}}}{=} 
        1_{H(L,B)}.
\end{align*}
}

\noindent Here, $(*)$ is anti-comultiplicativity of the antipode, and (\%) is the counit axiom.
 Therefore, $\bar{\Lambda}$ is a non-degenerate left integral of $H(L,B)$ as claimed.
\end{proof}

\begin{corollary}
The quantum transformation $H(L,B)$ is a Frobenius algebra via maps $\Delta_{\bar{\Lambda}},\; \varepsilon_{\bar{\lambda}}$ defined as follows:
\begin{align*}
    \Delta_{\bar{\Lambda}}(a\otimes \ell \otimes b) 
    &  =
    [(e^1\triangleleft \Lambda_1 S_L(\ell_1)) a \otimes 
    \ell_2 S_L(\Lambda_4) \otimes (be'^1 \triangleleft S_L(\Lambda_3)) ] 
    \otimes 
    [e^2 \otimes S^2_L(\Lambda_2) \otimes e'^2], \medskip\\
    \varepsilon_{\bar{\lambda}}(a\otimes \ell \otimes b) & = \omega(a)\; \lambda(\ell)\; \omega(b),
\end{align*}
for $a,b \in B$ and $\ell \in L$.
Here, $\Lambda$ is a right integral of $L$, and  $e'^1 \otimes e'^2$ is a copy of the separability idempotent $e^1 \otimes e^2$ of $B$. Moreover, $\lambda$ is a choice of element of $L^*$ such that~\eqref{eq:lam-S-1L} holds; in fact, $\lambda$ is a non-degenerate left integral of $L^*$.
\qed
\end{corollary}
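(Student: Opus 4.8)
The plan is to read this as a direct specialization of Theorem~\ref{thm:main-weak} to the weak Hopf algebra $H = H(L,B)$, using the concrete data already produced in Proposition~\ref{prop:QTG-nondeg}. Since $\bar{\Lambda}$ was shown there to be a \emph{non-degenerate} left integral of $H(L,B)$, Theorem~\ref{thm:main-weak}(b) (equivalently Theorem~\ref{thm:BNS}(c)) immediately gives that $H(L,B)$ is counital Frobenius, with comultiplication $\Delta_{\bar{\Lambda}}$ of the form \eqref{eq:Delta-weak} and counit $\varepsilon_{\bar{\lambda}}$ of the form \eqref{eq:ep-weak}. Thus no coassociativity, bimodule, or counitality axioms need to be re-checked; the entire content of the corollary is to unwind these two abstract maps into the explicit expressions claimed.

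For the comultiplication I would first invoke \eqref{eq:Lambda-S} to rewrite the defining formula \eqref{eq:Delta-weak} in the equivalent form $\Delta_{\bar{\Lambda}}(h) = h\,\bar{\Lambda}_1 \otimes S_{\text{wk}}(\bar{\Lambda}_2)$, so that the dependence on $h = a\otimes \ell\otimes b$ lands entirely in the first tensor factor, matching the shape of the stated answer (whose second factor $e^2\otimes S_L^2(\Lambda_2)\otimes e'^2$ is $h$-independent). The components $\bar{\Lambda}_1 = (e^1\triangleleft \Lambda_1)\otimes S_L(\Lambda_2)_1\otimes e'^1$ and $\bar{\Lambda}_2 = (e'^2\triangleleft S_L(S_L(\Lambda_2)_2))\otimes S_L(\Lambda_2)_3\otimes e^2$ are exactly the first display line in the proof of Proposition~\ref{prop:QTG-nondeg}, obtained from \eqref{eq:QTGcomult}. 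I would then apply $S_{\text{wk}}$ to $\bar{\Lambda}_2$ via \eqref{eq:QTGantipode} and multiply $h$ into $\bar{\Lambda}_1$ via \eqref{eq:QTGmult}, and simplify: the module-algebra axioms \eqref{eq:QTGaction1}–\eqref{eq:QTGaction2} collapse nested actions, the compatibility \eqref{eq:idempotentAction} transfers the residual action on the $e'^2$-leg onto the $e'^1$-leg (producing the factor $be'^1 \triangleleft S_L(\Lambda_3)$), and anti-comultiplicativity of $S_L$ together with coassociativity reindex the Sweedler legs of $\Lambda$ into $\Lambda_1,\dots,\Lambda_4$ and consolidate the iterated antipodes into the single $S_L^2(\Lambda_2)$ appearing in the statement.

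The counit is immediate: by \eqref{eq:barlambda} the dual element is $\bar{\lambda} = \omega\otimes\lambda\otimes\omega$, whence $\varepsilon_{\bar{\lambda}}(a\otimes \ell\otimes b) = \omega(a)\,\lambda(\ell)\,\omega(b)$, as claimed. The final assertion that $\lambda$ is a non-degenerate left integral of $L^*$ was already recorded in Proposition~\ref{prop:QTG-nondeg}: since $S_L(\Lambda)$ is a left integral of $L$ and $\lambda$ is chosen so that \eqref{eq:lam-S-1L} holds, this follows from Proposition~\ref{prop:L-integ}(a) and Remark~\ref{rem:nondeg}(b).

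I expect the only real obstacle to be the bookkeeping in the second paragraph: correctly tracking the several Sweedler components of $\Lambda$ after $S_L$ is applied anti-comultiplicatively to $\Lambda_2$, and threading the two separability idempotents $e^1\otimes e^2$ and $e'^1\otimes e'^2$ through the multiplication \eqref{eq:QTGmult} so that each factor lands in the displayed position. This step is routine in character but error-prone; once it is executed, the stated formula for $\Delta_{\bar{\Lambda}}(a\otimes \ell\otimes b)$ drops out and the corollary is complete.
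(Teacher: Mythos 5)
Your proposal tracks the paper's own proof essentially step for step: Frobenius-ness from Proposition~\ref{prop:QTG-nondeg} together with Theorem~\ref{thm:BNS}(c); the explicit comultiplication by rewriting \eqref{eq:Delta-weak} via \eqref{eq:Lambda-S} as $\Delta_{\bar{\Lambda}}(h) = h\,\bar{\Lambda}_1 \otimes S_{\text{wk}}(\bar{\Lambda}_2)$, expanding $\Delta_{\text{wk}}(\bar{\Lambda})$ with \eqref{eq:QTGcomult} and \eqref{eq:idempotentAction}, multiplying in $h$ via \eqref{eq:QTGmult}, collapsing with \eqref{eq:QTGaction1}--\eqref{eq:QTGaction2}, and applying \eqref{eq:QTGantipode}; and the counit read off from $\bar{\lambda} = \omega \otimes \lambda \otimes \omega$ via \eqref{eq:ep-weak}. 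This is exactly the paper's computation (its intermediate display is \eqref{eq:Delta-bar}), so the main content of your argument is correct and not a different route.

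One correction, though: your justification of the final assertion is wrong. You claim that ``$\lambda$ is a non-degenerate left integral of $L^*$'' was already recorded in Proposition~\ref{prop:QTG-nondeg} and follows from Proposition~\ref{prop:L-integ}(a) and Remark~\ref{rem:nondeg}(b). Proposition~\ref{prop:QTG-nondeg} records no such thing; it only invokes those two results to \emph{choose} an element $\lambda \in L^*$ satisfying \eqref{eq:lam-S-1L}. Existence and uniqueness of a solution to $\Psi_{S_L(\Lambda)}(\lambda) = 1_L$ does not, by itself, say that $\lambda$ is an integral of the dual Hopf algebra $L^*$, let alone a non-degenerate one. The paper obtains that clause from Theorem~\ref{thm:BNS}(d), applied to the (weak) Hopf algebra $L$ with non-degenerate left integral $S_L(\Lambda)$ and $\Psi_{S_L(\Lambda)}(\lambda) = 1_L$; your write-up should cite that instead.
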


\begin{proof}
The fact that $H:=H(L,B)$ is  Frobenius follows from Proposition~\ref{prop:QTG-nondeg} and Theorem~\ref{thm:BNS}(c). The formulas for the comultiplication and counit maps for the Frobenius structure of $H$ then follow from the formulas for the non-degenerate integrals $\bar{\Lambda}$ and $\bar{\lambda}$ of $H$ and of $H^*$, resp., in \eqref{eq:barLambda} and \eqref{eq:barlambda}. Namely, by \eqref{eq:QTGcomult} and \eqref{eq:idempotentAction}, we have that
\begin{equation} \label{eq:Delta-bar}
    \Delta_{\text{wk}}(\bar{\Lambda}) = [(e^1\triangleleft \Lambda_1) \otimes S_L(\Lambda_4) \otimes (e'^1 \triangleleft S_L(\Lambda_3)) ] \; \otimes \; [e'^2 \otimes S_L(\Lambda_2) \otimes e^2].
    \end{equation}
Then, to get $\Delta$ and $\varepsilon$, one needs to apply the formulas \eqref{eq:Delta-weak} and \eqref{eq:ep-weak}, resp., in the proof of  Theorem~\ref{thm:main-weak}. In particular, we have that:
{\small
\begin{align*}
  &\Delta_{\bar{\Lambda}}(a\otimes \ell \otimes b)
     =
    [(a\otimes \ell \otimes b)\; \bar{\Lambda}_1] \otimes S_{\text{wk}}(\bar{\Lambda}_2) 
    \\ \medskip
    &  \overset{\textnormal{\eqref{eq:Delta-bar},\eqref{eq:QTGmult}}}{=} 
    [(e^1\triangleleft \Lambda_1 S_L(\ell_1)) a \otimes 
    \ell_2 S_L(\Lambda_5) \otimes (b \triangleleft S_L(\Lambda_4))(e'^1 \triangleleft S_L(\Lambda_3)) ] 
    \otimes 
    S_{\text{wk}}[e'^2 \otimes S_L(\Lambda_2) \otimes e^2] \\ \medskip
    &  \overset{\textnormal{\eqref{eq:QTGaction2}}}{=} 
    [(e^1\triangleleft \Lambda_1 S_L(\ell_1)) a \otimes 
    \ell_2 S_L(\Lambda_4) \otimes (be'^1 \triangleleft S_L(\Lambda_3)) ] 
    \otimes 
    S_{\text{wk}}[e'^2 \otimes S_L(\Lambda_2) \otimes e^2]
    \\ \medskip
    &  \overset{\textnormal{\eqref{eq:QTGantipode}}}{=} 
    [(e^1\triangleleft \Lambda_1 S_L(\ell_1)) a \otimes 
    \ell_2 S_L(\Lambda_4) \otimes (be'^1 \triangleleft S_L(\Lambda_3)) ] 
    \otimes 
    [e^2 \otimes S^2_L(\Lambda_2) \otimes e'^2].
    \end{align*}}

\noindent Finally, the last statement holds by  Theorem~\ref{thm:BNS}(d).
\end{proof}

\begin{example}
If we take $L= \kk$, then $\Lambda = \lambda = 1_\kk$, and we have the following structure formulas for the Frobenius weak Hopf algebra $H:=H(\kk,B) = B^{op} \otimes B$:
\begin{itemize}
    \item algebra: $m((a \otimes b)\otimes (a' \otimes b)):=(a \otimes b)(a' \otimes b) = a'a \otimes bb'$, \quad  $1_H = 1_B \otimes 1_B$;\smallskip
    \item weak Hopf: $\Delta_{\text{wk}}(a \otimes b) = (a \otimes e^1) \otimes (e^2 \otimes b)$, \; $\varepsilon_{\text{wk}}(a \otimes b) = \omega(ab)$, \; $S_{\text{wk}}(a \otimes b) = b \otimes a$;\smallskip
    \item Frobenius: $\Delta(a \otimes b) = (e^1 a \otimes be'^1) \otimes (e^2 \otimes e'^2)$, \; $\varepsilon(a \otimes b) = \omega(a) \omega(b)$;
\end{itemize}
for $a,b \in B$. Indeed, let us check that $(H,\Delta, \varepsilon)$ is a coassociative, counital coalgebra:
{\small
\begin{align*}
(\Delta \otimes \id)\Delta(a \otimes b) &= \Delta(e^1 a \otimes be'^1) \otimes (e^2 \otimes e'^2)\\
&= (e''^1 e^1 a \otimes b e'^1 e'''^1) \otimes (e''^2 \otimes e'''^2) \otimes (e^2 \otimes e'^2)
\\
&= (e^1 e''^1 a \otimes b e'''^1 e'^1) \otimes (e^2 \otimes e'^2) \otimes (e''^2 \otimes e'''^2)\\
& \overset{\textnormal{\eqref{eq:idempotent1},\eqref{eq:idempotent3}}}{=}
  (e^1 a \otimes be'^1) \otimes (e''^1 e^2 \otimes e'^2 e'''^1) \otimes (e''^2 \otimes e'''^2)\\
  &=(e^1 a \otimes be'^1) \otimes \Delta(e^2 \otimes e'^2) \quad = (\id \otimes \Delta)\Delta(a \otimes b);
\end{align*}
}

\vspace{-.2in}

{\small
\begin{align*}
   (\varepsilon \otimes \id)\Delta(a \otimes b) 
   &= \omega(e^1 a) \;\omega(be'^1)\;(e^2 \otimes e'^2) \overset{\textnormal{\eqref{eq:idempotent1},\eqref{eq:idempotent3}}}{=} \omega(e^1) \;\omega(e'^1)\;(a e^2 \otimes e'^2b) \overset{\textnormal{\eqref{eq:trace}}}{=} a\otimes b;\\
      (\id \otimes \varepsilon)\Delta(a \otimes b)
   &= (e^1 a \otimes be'^1) \; \omega(e^2) \; \omega(e'^2) \overset{\textnormal{\eqref{eq:trace}}}{=} a\otimes b.
\end{align*}
}

\noindent Lastly, \eqref{eq:Delta-m} holds by the following computations: 
{\small
\begin{align*}
    \Delta((a \otimes b)(a'\otimes b')) &= (e^1 a'a \otimes bb'e'^1) \otimes (e^2 \otimes e'^2) =:(\star);\\
    (m \otimes \id)(\id \otimes \Delta)((a \otimes b) \otimes (a' \otimes b')) &= (a \otimes b) (e^1 a' \otimes b'e'^1) \otimes (e^2 \otimes e'^2) = (\star);
    \\
    (\id \otimes m)(\Delta \otimes \id)((a \otimes b) \otimes (a' \otimes b')) &=  (e^1 a \otimes be'^1) \otimes (e^2 \otimes e'^2)(a' \otimes b') \overset{\textnormal{\eqref{eq:idempotent1}}}{=} (\star).
    \end{align*}
    }
\end{example}

%%%%%%%%%%%%%%%%%%%%%%%%%%%%%%%%%%%%%%%%%%%%%%%%%%%%%%%%%%%%%%%%%

\section*{Acknowledgements}
The authors would like to thank Alex Chirvasitu for insightful discussions on the first version of this manuscript, and thank Robert Lipshitz for informing us of  the reference \cite{CohenGodin} in connection to our work. The authors would also like to thank the anonymous referee for providing comments that greatly improved the exposition of our manuscript. The first and second authors were partially supported by the US National Science Foundation grant DMS-2100756.

%%%%%%%%%%%%%%%%%%%%%%%%%%%%%%%%%%%%%%%%%%%%%%%%%%%%%%%%%%%%%%%%%

\bibliography{NonCounitalFrob}

\begin{thebibliography}{WWW21}

\bibitem[Abr96]{Abrams}
Lowell Abrams.
\newblock Two-dimensional topological quantum field theories and {F}robenius
  algebras.
\newblock {\em J. Knot Theory Ramifications}, 5(5):569--587, 1996.

\bibitem[ASS06]{ASSbook}
Ibrahim Assem, Daniel Simson, and Andrzej Skowro{\'n}ski.
\newblock {\em Elements of the representation theory of associative algebras.
  {V}ol. 1}, volume~65 of {\em London Mathematical Society Student Texts}.
\newblock Cambridge University Press, Cambridge, 2006.
\newblock Techniques of representation theory.

\bibitem[BN37]{BN}
R.~Brauer and C.~Nesbitt.
\newblock On the regular representations of algebras.
\newblock {\em Proceedings of the National Academy of Sciences},
  23(4):236--240, 1937.

\bibitem[BNS99]{BNS}
Gabriella B\"{o}hm, Florian Nill, and Korn\'{e}l Szlach\'{a}nyi.
\newblock Weak {H}opf algebras. {I}. {I}ntegral theory and {$C^*$}-structure.
\newblock {\em J. Algebra}, 221(2):385--438, 1999.

\bibitem[CG04]{CohenGodin}
Ralph~L. Cohen and V\'{e}ronique Godin.
\newblock A polarized view of string topology.
\newblock In {\em Topology, geometry and quantum field theory}, volume 308 of
  {\em London Math. Soc. Lecture Note Ser.}, pages 127--154. Cambridge Univ.
  Press, Cambridge, 2004.

\bibitem[{Fro}03]{Frobenius}
G.~{Frobenius}.
\newblock {Theorie der hyperkomplexen Gr\"o{\ss}en I. II.}
\newblock {\em {Berl. Ber.}}, 1903:504--537, 634--645, 1903.

\bibitem[Hen11]{HH}
Hannah Henker.
\newblock {\em Module categories over quasi-Hopf algebras and weak Hopf
  algebras and the projectivity of Hopf modules}.
\newblock PhD thesis, Ludwig–Maximilians–Universit\"{a}t, 2011.

\bibitem[IK10]{IK}
Miodrag~Cristian Iovanov and Lars Kadison.
\newblock When weak {H}opf algebras are {F}robenius.
\newblock {\em Proc. Amer. Math. Soc.}, 138(3):837--845, 2010.

\bibitem[KY13]{KerYam}
Otto Kerner and Kunio Yamagata.
\newblock Morita algebras.
\newblock {\em J. Algebra}, 382:185--202, 2013.

\bibitem[Lam99]{Lam}
T.~Y. Lam.
\newblock {\em Lectures on modules and rings}, volume 189 of {\em Graduate
  Texts in Mathematics}.
\newblock Springer-Verlag, New York, 1999.

\bibitem[Nak39]{Nak}
Tadasi Nakayama.
\newblock On {F}robeniusean algebras. {I}.
\newblock {\em Ann. of Math. (2)}, 40:611--633, 1939.

\bibitem[NV02]{nikshych2002finite}
Dmitri Nikshych and Leonid Vainerman.
\newblock Finite quantum groupoids and their applications.
\newblock {\em Math. Sci. Res. Inst. Publ}, 43(1):211--262, 2002.

\bibitem[Qui95]{Quinn}
Frank Quinn.
\newblock Lectures on axiomatic topological quantum field theory.
\newblock In {\em Geometry and quantum field theory ({P}ark {C}ity, {UT},
  1991)}, volume~1 of {\em IAS/Park City Math. Ser.}, pages 323--453. Amer.
  Math. Soc., Providence, RI, 1995.

\bibitem[Rad11]{radford}
David~E Radford.
\newblock {\em Hopf algebras}, volume~49.
\newblock World Scientific, 2011.

\bibitem[SY06]{SYpaper}
Andrzej Skowro\'{n}ski and Kunio Yamagata.
\newblock A general form of non-{F}robenius self-injective algebras.
\newblock {\em Colloq. Math.}, 105(1):135--141, 2006.

\bibitem[SY11]{SYbook}
Andrzej Skowro\'{n}ski and Kunio Yamagata.
\newblock {\em Frobenius algebras. {I}}.
\newblock EMS Textbooks in Mathematics. European Mathematical Society (EMS),
  Z\"{u}rich, 2011.
\newblock Basic representation theory.

\bibitem[WWW21]{WWW}
Chelsea Walton, Elizabeth Wicks, and Robert Won.
\newblock Algebraic structures in comodule categories over weak bialgebras.
\newblock {\em Communications in Algebra}, pages 1--34, 2021.

\bibitem[WZ04]{WangZhang}
Yanhua Wang and Pu~Zhang.
\newblock Construct bi-{F}robenius algebras via quivers.
\newblock {\em Tsukuba J. Math.}, 28(1):215--221, 2004.

\bibitem[Y{\i}r22]{Yir}
{\c{C}}i{\u{g}}dem Y{\i}rt{\i}c{\i}.
\newblock Gendo-{F}robenius algebras and comultiplication.
\newblock {\em Algebras and Representation Theory}, pages 1--16, 2022.

\end{thebibliography}
\bibliographystyle{alpha}

\end{document}